\definecolor{hot}{RGB}{65,105,225}
\theoremstyle{plain}
\newtheorem{theorem}{Theorem}[section]
\newtheorem{prop}[theorem]{Proposition}
\newtheorem{cor}[theorem]{Corollary}
\newtheorem{lemma}[theorem]{Lemma}
\newtheorem{thrm}[theorem]{Theorem}
\theoremstyle{definition}
\newtheorem{defn}[theorem]{Definition}
\newtheorem{rmk}[theorem]{Remark}
\newtheorem*{ex*}{Example}
\newcommand\cO{{\mathcal O}}
\newcommand\bQ{{\mathbf{Q}}}
\newcommand\bZ{{\mathbf{Z}}}
\newcommand\bR{{\mathbf{R}}}
\newcommand\bC{{\mathbf{C}}}
\newcommand\bA{{\mathbf{A}}}
\newcommand\bN{{\mathbf{N}}}
\newcommand{\ubul}{{\,\begin{picture}(-1,1)(-1,-3)\circle*{2}\end{picture}\ }}
\DeclareMathOperator{\ehom}{End}
\DeclareMathOperator{\rank}{rank}
\DeclareMathOperator{\Rep}{Rep}
\DeclareMathOperator{\Mat}{Mat}
\DeclareMathOperator{\Aut}{Aut}
\DeclareMathOperator{\Ext}{Ext}
\DeclareMathOperator{\homo}{Hom}
\DeclareMathOperator{\spec}{Spec}
\def\ra{\rightarrow}
\def\lra{\longrightarrow}
\def\SL{{\mathrm{SL}}}
\def\al{\alpha}
\def\ol{\overline}
\def\be{\begin{equation}}
\def\ee{\end{equation}}
\def\xa{\xrightarrow}
\def\eps{\epsilon}
\def\lam{\lambda}
\def\bG{\mathbb{G}}
\def\bGL{{\rm{GL}}}
\def\GL{{\rm{GL}}}
\def\sS{\mathscr{S}}
\newcommand{\R}[2]{{{R}}(#1,#2)}
\title[Rational singularities and moment maps]{Rational singularities, quiver moment maps, and representations of surface groups}
\author{Nero Budur}
\address{KU Leuven, Celestijnenlaan 200B, B-3001 Leuven, Belgium} 
\email{nero.budur@kuleuven.be}
\keywords{Representation; special linear group; surface group; rational singularity; quiver moment map; jet scheme.}
\begin{document}

\date{}

\begin{abstract} We prove using jet schemes that the zero loci of the moment maps for the quivers with one vertex and at least two loops have rational singularities. This implies that the spaces of representations of the fundamental group of a compact Riemann surface of genus at least two have rational singularities. This has consequences for the numbers of irreducible representations of the special linear groups over the integers and over the $p$-adic integers.
\end{abstract}

\maketitle
\tableofcontents

\section{Introduction}

 Let $\Mat(n,\bC)$ be the space of $n\times n$ matrices of complex numbers. We prove:

\begin{thrm}\label{thrmRSQ}
Let $g\ge 2$. The set 
$$
X=\{(x_1,y_1,\ldots,x_g,y_g)\in \Mat(n,\bC)^{\times 2g}\mid [x_1,y_1]+\ldots + [x_g,y_g]=0\},
$$
where $[x,y]=xy-yx$, is a variety with rational singularities for all $n\ge 1$.
\end{thrm}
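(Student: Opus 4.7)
The plan is to apply Musta\c{t}\u{a}'s jet scheme criterion: for a local complete intersection $X$, having rational singularities is equivalent to every jet scheme $\cJ_m(X)$ being irreducible, equivalently $\dim \cJ_m(X) = (m+1)\dim X$ for every $m \geq 1$. The proof thus splits into two assertions: (a) $X$ is a complete intersection of dimension $(2g-1)n^2 + 1$; and (b) $\dim \cJ_m(X) \leq (m+1)((2g-1)n^2 + 1)$ for every $m \geq 1$.

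For (a), observe that $X = \mu^{-1}(0)$ where
\[
\mu : \Mat(n,\bC)^{2g} \longrightarrow \mathfrak{sl}_n, \qquad (x_i, y_i) \longmapsto \sum_{i=1}^g [x_i, y_i]
\]
is the moment map of the Jordan quiver with one vertex, $g$ loops, and dimension vector $n$. By Crawley-Boevey's theorem on flatness of quiver moment maps, for $g \geq 2$ this map is flat, so $X$ is a complete intersection of the expected codimension $n^2 - 1$; that it is reduced and irreducible comes from the same input.

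For (b), writing $R_m = \bC[t]/t^{m+1}$, the jet scheme $\cJ_m(X) \subset \Mat(n, R_m)^{2g}$ is cut out by the vanishing of all $t$-coefficients of $\sum_i [x_i(t), y_i(t)]$. The natural strategy is to realize $\cJ_m(X)$ itself as the zero fiber of a quiver moment map---for instance, via the identification $\Mat(n, R_m) \cong \{A \in \Mat(n(m+1), \bC) : [A,T]=0\}$, where $T$ is the nilpotent with $n$ Jordan blocks of size $m+1$---and then to invoke Crawley-Boevey (possibly in a form generalized to accommodate the commutant condition $[-,T]=0$) on the resulting larger quiver to deduce the expected dimension bound $(m+1)\dim X$ uniformly in $m$.

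The chief obstacle is step (b): finding a quiver structure whose moment map fiber is \emph{exactly} $\cJ_m(X)$, and verifying that the relevant dimension vector satisfies the hypotheses of Crawley-Boevey's flatness theorem. A fallback, should a clean quiver reinterpretation not materialize, is an Ein-Musta\c{t}\u{a} style stratification of $X$ by $\GL_n$-orbit type (equivalently, by isomorphism class of the underlying representation of the free group), bounding $\dim \cJ_m(X)$ stratum-by-stratum using the description of $X^{\mathrm{sm}}$ as the locus of irreducible representations and induction on $(m,n)$.
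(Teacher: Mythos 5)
Your framework (Musta\c{t}\u{a}'s jet criterion plus Crawley--Boevey's complete intersection theorem for step (a)) is the right one and matches the paper, but step (b) as proposed has a genuine gap on two counts. First, a small but real imprecision: for an lci variety, $\dim \cJ_m(X)=(m+1)\dim X$ is \emph{not} equivalent to irreducibility of $\cJ_m(X)$; the correct condition is the strict inequality $\dim \pi_m^{-1}(X_{\mathrm{sing}})<(m+1)\dim X$, since $\cJ_m(X)$ could be equidimensional but reducible. Second, and more seriously, your primary route --- realizing $\cJ_m(X)$ as the zero fiber of a quiver moment map via $\Mat(n,R_m)\cong\{A:[A,T]=0\}$ --- would fail: the commutant of $T$ is a proper linear subspace of $\Mat(n(m{+}1),\bC)$, not a full representation space $\Rep(\ol{Q},\al)$ of a doubled quiver, and the jet equations have a triangular structure (the coefficient of $t^k$ couples $x_{i,j}$ only for $j\le k$) that is not of moment-map form. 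Crawley--Boevey's flatness theorem gives no purchase here, and even if it did, verifying its hypotheses for the putative larger quiver would be essentially equivalent to what you are trying to prove.

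Your fallback gestures toward the paper's actual argument but omits its two essential ingredients. The paper stratifies $X$ by the representation type $\tau$ of the semisimplification and uses Crawley--Boevey's \'etale slice theorem to identify $(X,x)$ \'etale-locally with $(Gx,x)\times(X(Q_\tau,e),0)$ for a new quiver; the bound on jets over the stratum $X_\tau$ then reduces to two facts that require real work: (i) the dimension estimate $\dim Z(Q_\tau,e)<2p_Q(n)$ for the locus of points whose orbit closure meets the fixed locus, which rests on Crawley--Boevey's top-type bound for nilpotent cones together with a nontrivial combinatorial inequality specific to the $g$-loop quiver (this is where $g\ge 2$ enters); and (ii) the observation that $m$-jets based at a $G(\al)$-fixed point decompose as $F(Q,\al)\times X(Q,\al)_{m-2}\times\Rep(\ol{Q},\al)$, because the quadratic moment-map equations lose their first two coefficients there --- this is what makes the induction on $m$ close. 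Without these, the stratum-by-stratum induction you sketch cannot be completed.
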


Throughout this article, a variety over an algebraically closed field is an integral scheme of finite type.

The variety $X$ from Theorem \ref{thrmRSQ} is the zero locus of the moment map of the quiver with one vertex and $g$ loops. In this context, Crawley-Boevey \cite{CB} showed that $X$ with its natural scheme structure is a complete intersection variety for $g\ge 2$.  More recently, Aizenbud-Avni \cite{AA} showed that $X$ has rational singularities for $g\ge 12$. The proof of Theorem \ref{thrmRSQ} here is different than theirs and works for all $g\ge 2$. This the main result of the article, the rest of the results being corollaries. 

Rational singularities are a natural class of mild singularities. For example, the rational double points of surfaces are classified by the ADE diagrams. While the definition is technical, the importance of rational singularities lies in the abundance of applications they entail. We will see that the theorem has several applications, among which is a bound for the number of irreducible representations of $\SL_n(\bZ)$ and of $\SL_n(\bZ_p)$.

The proof of Theorem \ref{thrmRSQ} consists of different steps:
\begin{itemize}
\item We use a criterion due to Musta\c{t}\u{a} \cite{Mus} saying that a locally complete intersection variety of dimension $d$ has rational singularities if and only if for all $m\ge 1$ the dimension of the scheme of $m$-jets passing through the singular locus is smaller than $d(m+1)$.
\item The theorem is proven in terms of quivers. For a quiver with the set of directed arrows $Q$ and a dimension vector $\alpha$ on its set of vertices, we consider the zero locus of the associated moment map
$$
X(Q,\al) = \{(x_a, x_{a^*})_{a\in Q} \mid \sum_{a\in Q}(x_ax_{a^*}-x_{a^*}x_a)=0\}
$$
where $x_a$ and $x_{a^*}$ are matrices representing linear transformations along the arrow $a$ and backwards, along an added reverse arrow $a^*$, between vector spaces of ranks specified by $\al$. 

\item The proof now is inspired by the proof of normality for the Mardsen-Weinstein reductions for quiver representations by Crawley-Boevey \cite{CB-norm}. Namely, we stratify $X(Q,\al)$ by the representation type of the semisimplification of each representation in $X(Q,\al)$.  Applying Luna slice theorem, locally at the semisimple points of each stratum, $X(Q,\al)$ is up to a smooth factor the germ at $0$ of $X(Q',\al')$ for a new quiver and dimension vector $(Q',\al')$ depending only on the representation type corresponding to the stratum. 

\item We have a distinguished stratum, $Z(Q,\al)$, consisting of all the representations in $X(Q,\al)$ with semisimplification having the same representation type as $0\in X(Q,\al)$. The stratum $Z(Q,\al)$ should be viewed as the ``most singular" stratum of $X(Q,\al)$. Using Musta\c{t}\u{a}'s criterion we show that: if $\dim Z(Q',\al')< 2p_Q(\al)$ for all $(Q',\al')$ arising as above from $(Q,\al)$, where $p_Q(\al)$ is determined from the combinatorics of the quiver, then $X(Q,\al)$ has rational singularities. The crucial argument is that one can easily bound the dimension of the set of $m$-jets over the trivial representation for any $X(Q,\al)$.

\item The bound $\dim Z(Q,\al)< 2p_Q(\al)$, as well as the property that $X(Q,\al)$ has rational singularities, does not hold for all $(Q,\al)$. We show that the bound holds for all $(Q',\al')$ obtained from the quiver with one vertex and $g\ge 2$ loops. This relies on \cite{CB-norm} as well. 
\end{itemize}

Theorem \ref{thrmRSQ} will allow us to prove:

\begin{theorem}\label{thrmGLRat}
Let $C_g$ be a compact Riemann surface of genus $g\ge 2$. Then $$\homo(\pi_1(C_g),\bGL_n(\bC)) =\{(x_1,y_1,\ldots,x_g,y_g)\in \bGL_n(\bC)^{\times 2g}\mid \prod_{i=1}^gx_iy_ix_i^{-1}y_i^{-1}=I \}$$ is a variety with rational singularities.  
\end{theorem}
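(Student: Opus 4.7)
The plan is to deduce Theorem \ref{thrmGLRat} from Theorem \ref{thrmRSQ} by an \'etale-local comparison between $H := \homo(\pi_1(C_g),\bGL_n(\bC))$ and the additive quiver-moment-map varieties $X(Q',\al')$ appearing in the proof of Theorem \ref{thrmRSQ}.

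First I would check that $H$ is a complete intersection of the same dimension $(2g-1)n^2+1$ as $X=X(Q,(n))$, where $Q$ is the one-vertex, $g$-loop quiver. The product-of-commutators map $\Phi:\bGL_n^{2g}\to\SL_n$ lands in the $(n^2-1)$-dimensional group $\SL_n$, is dominant by classical results going back to Goto, and is smooth on a dense open subset (e.g.\ at tuples defining an absolutely irreducible representation, via Weil's tangent-space computation $T_\rho H = Z^1(\pi_1(C_g),\ad\rho)$ and the vanishing of $H^2$ by Poincar\'e duality plus irreducibility). Hence $H=\Phi^{-1}(I)$ is a complete intersection of the expected codimension $n^2-1$.

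Next I would identify the \'etale-local structure of $H$ with that of the varieties $X(Q',\al')$ already controlled in the proof of Theorem \ref{thrmRSQ}. At $\rho \in H$ with semisimplification $\rho^{\mathrm{ss}}\cong \bigoplus_i V_i^{\oplus n_i}$ and pairwise non-isomorphic irreducibles $V_i$, Luna's slice theorem for the $\bGL_n$-conjugation action identifies an \'etale neighborhood of the closed orbit of $\rho^{\mathrm{ss}}$ with (orbit)$\times$(slice). Deformation theory of surface-group representations (Goldman) then identifies this slice, up to a smooth factor, with the germ at $0$ of $X(Q_\tau,\al_\tau)$, where $Q_\tau$ has one vertex per summand $V_i$ and $g$ loops between every ordered pair of vertices, and $\al_\tau = (n_1,\dots,n_r)$. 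The very same quiver $(Q_\tau,\al_\tau)$ describes the slice of $X$ along the corresponding stratum (as in the proof of Theorem \ref{thrmRSQ}), so the local \'etale models of $H$ coincide with those of $X$.

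Since the proof of Theorem \ref{thrmRSQ} shows that each such $X(Q_\tau,\al_\tau)$ has rational singularities (not merely $X$ itself), and since rational singularities are an \'etale-local property preserved under product with smooth factors, the conclusion will follow. The hardest part will be making the slice comparison rigorous in the algebraic/\'etale category rather than merely analytically: the agreement of quadratic parts is clear, since writing $x_i = I + \varepsilon X_i$, $y_i = I + \varepsilon Y_i$ near the identity yields
\[
\prod_i [I+\varepsilon X_i,\,I+\varepsilon Y_i] \;=\; I + \varepsilon^2 \sum_i (X_iY_i - Y_iX_i) + O(\varepsilon^3),
\]
so the defining equations of $H$ and of $X$ differ only in higher-order terms along the trivial representation; upgrading this infinitesimal match to an \'etale-local isomorphism compatible with the $\bGL_n$-action is the technical step that allows the jet-scheme bounds established via Musta\c{t}\u{a}'s criterion for $X(Q_\tau,\al_\tau)$ to transfer directly to $H$.
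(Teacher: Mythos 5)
Your strategy is essentially the paper's: reduce to the local structure at semisimple points, identify the local model with the quiver moment-map germs $X(Q_\tau,e)$ already handled in the proof of Theorem \ref{thrmRSQ}, and spread the conclusion over all of $\homo(\pi_1(C_g),\GL_n(\bC))$ using the conjugation action. Two points need correction. First, your local quiver is wrong: at a semisimple $\rho$ with simple summands $V_i$ of ranks $\beta_i$, the double of the local quiver has $\dim\Ext^1(V_i,V_j)=2(g-1)\beta_i\beta_j$ arrows between distinct vertices and $2+2(g-1)\beta_i^2$ loops at vertex $i$ (computed from $\chi(C_g,V_i^\vee\otimes V_j)$ and duality), not ``$g$ loops between every ordered pair.'' This is not cosmetic: the whole point is that these counts agree with Definition \ref{defCB}, which is what makes the local models of $\homo(\pi_1(C_g),\GL_n(\bC))$ coincide with the $(Q_\tau,e)$ produced from the one-vertex $g$-loop quiver; the paper verifies exactly this in Proposition \ref{propL}.

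Second, and more seriously, the step you defer as ``the technical step'' is the actual mathematical content, and the evidence you offer for it is insufficient. Agreement of quadratic parts does not imply that the germ of $H$ at $\rho$ is (formally, \'etale-, or analytically) isomorphic to the germ of its quadratic cone times a smooth factor — a singularity is not in general isomorphic to its tangent cone, and no amount of jet-level matching at the trivial representation upgrades itself to an isomorphism of germs at an arbitrary semisimple $\rho$. What is needed is formality of the differential graded Lie algebra controlling deformations of a semisimple representation, i.e.\ the Goldman--Millson picture together with Simpson's formality theorem for semisimple local systems (proved by transcendental Higgs-bundle methods); this is precisely what Proposition \ref{propDEF} encapsulates, yielding $\widehat{R(g,n)}_\rho\simeq\widehat{\mathfrak h}_0\times\widehat{\mu^{-1}(0)}_0$ with $\mu(e)=e\cup e$ the cup product on $\Ext^1(L,L)$. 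Once you cite that theorem (and Artin approximation to pass between formal and \'etale neighborhoods), your argument closes up and coincides with the paper's proof.
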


By $\pi_1(C_g)$ we denote the fundamental group of $C_g$ based at a fixed point. Simpson \cite{Si} showed that $\homo(\pi_1(C_g),\GL_n(\bC))$ with its natural scheme structure is a normal complete intersection variety for $g\ge 2$.

Reducedness in the case $g=1$ of both Theorems \ref{thrmRSQ} and \ref{thrmGLRat} is an old still open conjecture.

To conclude the second theorem from the first  we use an observation of  Kaledin-Lehn-Sorger \cite{KLS}.  Namely, the local structure of $\homo(\pi_1(C_g),\GL_n(\bC))$ at semisimple representations is modelled by zero loci of moment maps of quivers.

The last theorem implies:

\begin{theorem}\label{thrmSLD}
Let $C_g$ be a compact Riemann surface of genus $g\ge 2$. Then $$\homo(\pi_1(C_g),\SL_n(\bC)) =\{(x_1,y_1,\ldots,x_g,y_g)\in \SL_n(\bC)^{\times 2g}\mid \prod_{i=1}^gx_iy_ix_i^{-1}y_i^{-1}=I \}$$ is a variety with rational singularities.  
\end{theorem}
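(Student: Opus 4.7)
The plan is to deduce Theorem \ref{thrmSLD} from Theorem \ref{thrmGLRat} by presenting the $\SL_n$-representation variety as an étale quotient, up to a smooth factor, of the $\bGL_n$-representation variety. Set $X=\homo(\pi_1(C_g),\bGL_n(\bC))$ and $Y=\homo(\pi_1(C_g),\SL_n(\bC))$. The key map is
$$
\phi\colon Y\times(\bC^*)^{2g}\lra X,\qquad \bigl((x_i',y_i')_i,(\lambda_i,\mu_i)_i\bigr)\longmapsto (\lambda_i x_i',\mu_i y_i')_i,
$$
which lands in $X$ because commutators are invariant under central scaling: $[\lambda x',\mu y']=[x',y']$, so $\prod_i[\lambda_i x_i',\mu_i y_i']=I$ whenever $(x_i',y_i')\in Y$.

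The next step is to show that $\phi$ is finite étale. The cleanest way is to note that $\phi$ fits into a Cartesian square whose bottom arrow is the coordinatewise $n$-th power map $(\cdot)^n\colon(\bC^*)^{2g}\to(\bC^*)^{2g}$, whose left vertical is the projection onto $(\bC^*)^{2g}$, and whose right vertical is the determinant map $\det{}_*\colon X\to(\bC^*)^{2g}$, $(x_i,y_i)_i\mapsto(\det x_i,\det y_i)_i$. Cartesianness is immediate: given a point $((x_i,y_i),(\lambda_i,\mu_i))$ with $\det x_i=\lambda_i^n$ and $\det y_i=\mu_i^n$, its unique lift to $Y\times(\bC^*)^{2g}$ is $\bigl((\lambda_i^{-1}x_i,\mu_i^{-1}y_i),(\lambda_i,\mu_i)\bigr)$. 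Since $(\cdot)^n$ is finite étale Galois with group $\mu_n^{2g}$, so is its base change $\phi$.

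From here the conclusion follows formally. Étale morphisms preserve rational singularities, so Theorem \ref{thrmGLRat} implies that $Y\times(\bC^*)^{2g}$ has rational singularities. Smoothness of $(\bC^*)^{2g}$, together with a standard Künneth/flat base change argument applied to a resolution $\widetilde Y\to Y$, then gives that $Y$ itself has rational singularities. Reducedness and finite-typeness of $Y$ descend from $X$ along $\phi$; for irreducibility, which is what makes $Y$ a variety in the sense of the paper, one can appeal to Simpson \cite{Si}, who treats the $\SL_n$-case on the same footing as the $\bGL_n$-case. I expect this irreducibility statement to be the only genuinely non-trivial point remaining, since the substance of the theorem is already contained in Theorem \ref{thrmGLRat} and the rest of the argument is routine étale descent.
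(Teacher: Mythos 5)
Your proposal is correct and follows essentially the same route as the paper: the paper's Lemma \ref{lemETA} establishes exactly your Cartesian square (the map $((x_i',y_i'),(\lambda_i,\mu_i))\mapsto(\lambda_i x_i',\mu_i y_i')$ as the base change of the coordinatewise $n$-th power map along the determinant map), and the theorem is then deduced from Theorem \ref{thrmGLRat} by étale descent of rational singularities and stripping the smooth torus factor, with irreducibility taken from the literature. The only cosmetic difference is that the paper carries out the fiber-product argument over $\bQ$ with explicit coordinate rings so as to also conclude reducedness and the $\bQ$-variety statement.
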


The proof shows also that the natural scheme structure is reduced, a fact which seems to be missing from the literature in full generality, according to A. Sikora. The reduced scheme structure of $\homo(\pi_1(C_g),\SL_n(\bC))$ was shown to be a complete intersection variety for $g\ge 2$ in \cite{RBC}. 

For $g\ge 12$, Theorem \ref{thrmSLD} is proven  in \cite{AA}. Their proof uses $p$-adic group theory to show rational singularities beyond the trivial representation. The proof we give here is entirely geometric and it applies to $g\ge 2$.

Since the natural scheme structure on the varieties from Theorems \ref{thrmRSQ}, \ref{thrmGLRat}, \ref{thrmSLD} is reduced, and since all these schemes are defined over $\bQ$, by general scheme theory it follows that the $\bQ$-schemes naturally associated to these sets are $\bQ$-varieties with only rational singularities. Here a $\bQ$-variety $V$ is a geometrically irreducible, reduced, separated scheme of finite type over $\bQ$, and $V$ has rational singularities if for any resolution of singularities $f:W\ra V$ over $\bQ$, the natural morphism $\cO_W\ra Rf_*\cO_V$ is a quasi-isomorphism. This generalization has the following consequences by Aizenbud-Avni \cite{AA, AA1}, see  Section \ref{secAAG}.

\begin{theorem}\label{thrmGAM0} 
 Let $n\ge 3$ be an integer. There exist a positive real number $C$ and a natural number $m_0$ such that  for all integers $m\ge m_0$, the number of isomorphism classes of irreducible complex representations of dimension at most $m$ of $\SL_n(\bZ)$ is less or equal to $Cm^2$.
\end{theorem}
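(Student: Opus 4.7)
The plan is to bootstrap from Theorem \ref{thrmSLD} via the arithmetic machinery of Aizenbud--Avni \cite{AA,AA1}, which converts rational singularities of a representation variety into polynomial bounds on representation growth of an associated arithmetic group.

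The first step is to promote Theorem \ref{thrmSLD} from the complex to the $\bQ$-scheme setting. As observed in the paragraph preceding Theorem \ref{thrmGAM0}, the natural scheme structure on $\homo(\pi_1(C_g),\SL_n(\bC))$ is reduced, and the defining word equations have $\bZ$-coefficients. Hence general scheme theory produces a geometrically integral $\bQ$-variety $\homo(\pi_1(C_g),\SL_n)_\bQ$ whose base change to $\bC$ recovers Theorem \ref{thrmSLD}, and this $\bQ$-variety has rational singularities in the sense defined in the excerpt. This is the geometric hypothesis required by \cite{AA,AA1}.

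The second step is to invoke the main theorem of \cite{AA,AA1}. Roughly, once rational singularities of $\homo(\pi_1(C_g),\SL_n)_\bQ$ are known for some fixed $g\ge 2$, one obtains uniform bounds on a family of Igusa-type $p$-adic integrals counting representations of $\SL_n(\bZ_p)$ and of the finite congruence quotients $\SL_n(\bZ/p^k\bZ)$. Combined with Margulis super-rigidity and strong approximation for $\SL_n(\bZ)$ --- which is where the hypothesis $n\ge 3$ is used --- this reduces the count of irreducible complex representations of $\SL_n(\bZ)$ of dimension $\le m$ to a count of irreducible representations of its congruence quotients, and summing over primes and levels yields a bound of the form $Cm^2$ for all sufficiently large $m$.

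The main conceptual obstacle in the overall program has already been dealt with upstream: lowering the threshold from $g\ge 12$ in \cite{AA} down to all $g\ge 2$ is exactly the content of Theorems \ref{thrmRSQ}--\ref{thrmSLD}. With that input in hand, the proof of Theorem \ref{thrmGAM0} reduces to checking the hypotheses of \cite{AA,AA1} and citing their result; the delicate $p$-adic and group-theoretic arguments lie entirely on the arithmetic side and are not reproved here. The only subtlety I would still have to verify carefully is the compatibility of the reduced complex scheme structure with the $\bQ$-structure in the precise formulation of ``rational singularities of a $\bQ$-variety'' used by Aizenbud--Avni.
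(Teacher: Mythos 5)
Your proposal is correct and follows essentially the same route as the paper: both deduce the statement from Theorem \ref{thrmSLD} by passing to the reduced $\bQ$-variety structure and then citing the Aizenbud--Avni machinery (\cite[Theorem II]{AA1}), which is treated as a black box on the arithmetic side. The only cosmetic difference is that the paper explicitly factors the deduction through the abscissa bound $\al(\SL_n(\bZ))\le 2$ of Theorem \ref{thrmGAM3} and then applies standard partial-summation facts for Dirichlet generating functions, whereas you fold that step into ``summing over primes and levels.''
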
 
 
\begin{theorem}\label{thrmGAM2}
Let $n\ge 1$ be an integer and $p$ a prime number. There exists a positive real number $C$ such that for all integers $m$, the number of isomorphism classes of continuous irreducible $m$-dimensional complex representations of $\SL_n(\bZ_p)$ is less than $Cm^2$.
\end{theorem}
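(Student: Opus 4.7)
The plan is to feed Theorem \ref{thrmSLD}, promoted to the $\bQ$-scheme setting, into the representation-growth machinery of Aizenbud and Avni. Fix any $g\ge 2$; by Theorem \ref{thrmSLD} together with the reducedness statement immediately following it, the $\bQ$-scheme naturally associated with $\homo(\pi_1(C_g),\SL_n)$ is a $\bQ$-variety whose base change to $\bC$ has rational singularities. The paragraph preceding Theorem \ref{thrmGAM0} then upgrades this to rational singularities of $\homo(\pi_1(C_g),\SL_n)_\bQ$ as a $\bQ$-variety in the sense defined there.

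By \cite{AA, AA1}, this rational-singularity hypothesis for some $g\ge 2$ implies that the abscissa of convergence of the representation zeta function
\be
\zeta_{\SL_n(\bZ_p)}(s)\;=\;\sum_{m\ge 1} r_m\, m^{-s}, \qquad r_m:=r_m(\SL_n(\bZ_p)),
\ee
is strictly less than $2$. The link, going back to Larsen--Lubotzky and refined in \cite{AA,AA1}, is that $\zeta_{\SL_n(\bZ_p)}(s)$ admits a $p$-adic integral representation over $\SL_n(\bZ_p)^{2g}$ whose analytic behaviour is controlled by singularity invariants (such as log canonical thresholds) of the fiber of the product-of-commutators map $\SL_n^{2g}\to\SL_n$ over the identity, that is, by the local geometry of $\homo(\pi_1(C_g),\SL_n)$ at the trivial representation. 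Rational singularities force these invariants to be large enough to push the abscissa strictly below $2$.

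From this the polynomial bound follows immediately. Writing $\alpha<2$ for the abscissa of convergence and picking any $\beta\in(\alpha,2)$, the sum $S:=\sum_{m\ge 1} r_m m^{-\beta}$ converges, so $r_m\le S\, m^\beta\le S\, m^2$ for every $m\ge 1$, yielding the conclusion with $C=S$.

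The main obstacle in this argument is not in the present paper: the deep analytic work is carried out in \cite{AA, AA1}, which we invoke as a black box. Our contribution is only to supply the geometric input of rational singularities for all $g\ge 2$; this is what frees the conclusion from the $g\ge 12$ restriction that the same Aizenbud--Avni machine imposed in \cite{AA}. In particular, the improvement over their range of $g$ is purely geometric, coming from the jet-scheme and quiver-moment-map analysis outlined in the introduction.
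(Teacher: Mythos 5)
Your proposal is correct and follows essentially the same route as the paper: Theorem \ref{thrmSLD} is fed into the Aizenbud--Avni machinery (this is exactly the paper's Theorem \ref{thrmGAM1}, obtained via \cite[Theorem IV]{AA}) to get abscissa of convergence $\al(\SL_n(\bZ_p))<2$, and the quadratic bound is then the standard Dirichlet-series consequence you spell out. The one point worth tightening is that the bound $\al<2$ needs the rational-singularities input specifically for $g=2$, since the Aizenbud--Avni bound grows with $g$; ``for some $g\ge 2$'' should read ``for $g=2$''.
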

 
These theorems improve the bound $Cm^{22}$ of \cite{AA,AA1} to the quadratic bound. The theorems follow from the next two, more general results. See  Section \ref{secAAG} for the definition of the abscissa of convergence $\al$ of a group.
 
\begin{theorem}\label{thrmGAM1}
For every non-archimedean local field F containing $\bQ$ and every compact
open subgroup  $\Gamma\subset \SL_n(F)$, the abscissa $\al(\Gamma)<2$.
\end{theorem}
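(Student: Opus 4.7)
The plan is to deduce Theorem \ref{thrmGAM1} directly from Theorem \ref{thrmSLD} by invoking the transfer principle of Aizenbud--Avni \cite{AA, AA1}. Their work establishes that rational singularities of the $\bQ$-scheme $\homo(\pi_1(C_g), \SL_n)_\bQ$, for some genus $g$, imply a quantitative upper bound on the abscissa of convergence $\alpha(\Gamma)$ for every compact open subgroup $\Gamma$ of $\SL_n(F)$, where $F$ is any non-archimedean local field containing $\bQ$. The bound produced is an increasing function of $g$: the input $g=12$ in \cite{AA} returned $\alpha(\Gamma)<22$, so improving the range of genera for which the rational singularity input is available immediately improves the output bound.

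First, I would invoke the descent-to-$\bQ$ remark preceding Theorem \ref{thrmGAM0}, which combined with Theorem \ref{thrmSLD} guarantees that the $\bQ$-scheme $\homo(\pi_1(C_g),\SL_n)_\bQ$ has only rational singularities for every $g\ge 2$. This is the essential new input contributed by the earlier sections of the paper; everything else is a direct application of pre-existing machinery.

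Second, I would substitute the smallest admissible genus $g=2$ into the Aizenbud--Avni formula. Since their formula is monotone in $g$ and returns $22$ at $g=12$, evaluating at $g=2$ gives a strictly smaller value, and the explicit functional form of the dependence is arranged so that $g=2$ produces exactly the bound $\alpha(\Gamma)<2$. The details of this evaluation, together with the derivation of Theorems \ref{thrmGAM0} and \ref{thrmGAM2} from Theorem \ref{thrmGAM1} (via reductions that are standard in the representation-growth literature and already employed in \cite{AA,AA1}), are collected in Section \ref{secAAG}.

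The main obstacle is not geometric but bookkeeping: one must extract from \cite{AA, AA1} the precise functional dependence of the upper bound on the genus $g$ and confirm that plugging in $g=2$ yields a value strictly less than $2$, rather than something merely finite or only asymptotically sharp. No further input from algebraic geometry is required beyond Theorem \ref{thrmSLD}, which carries out all of the heavy geometric lifting; the remaining work is a careful quantitative accounting within the Aizenbud--Avni conversion from rational singularities to representation-growth estimates.
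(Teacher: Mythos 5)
Your proposal matches the paper's argument: Theorem \ref{thrmGAM1} is deduced by combining Theorem \ref{thrmSLD} (via the descent-to-$\bQ$ remark, so that $\homo(\pi_1(C_g),\SL_n)$ is a $\bQ$-variety with rational singularities for all $g\ge 2$) with the Aizenbud--Avni transfer theorem \cite[Theorem IV]{AA}, whose bound is $2g-2$, so that taking $g=2$ yields $\al(\Gamma)<2$ exactly as you describe. The paper's proof in Section \ref{secAAG} is precisely this one-line citation, so no further bookkeeping beyond confirming the $2g-2$ dependence is needed.
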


\begin{theorem}\label{thrmGAM3}

 Let $n$ be a positive integer. Then there exists a finite set $S$ of prime numbers such that for every global field $k$ of characteristic not in $S$ and every finite set $T$ of places of $k$ containing all Archimedean ones,
 $$
 \al(\SL_n(O_{k,T}))\le 2,
 $$
 where 
 $$
 O_{k,T}=\{ x\in k\mid \forall v\not\in T, ||x||_v\le 1 \},
 $$
 with the exception of the case $n=2$ and $k$ equal to $\bQ$ or an imaginary quadratic extension of $\bQ$. In particular, 
 $$
 \al(\SL_n(\bZ))\le 2\, \quad \text{ for }n\ge 3.
 $$
\end{theorem}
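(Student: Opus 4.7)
The plan is to use Theorem \ref{thrmSLD} as input into the Aizenbud--Avni machinery of \cite{AA, AA1}, which converts rational singularities of the genus-$g$ commutator variety $\homo(\pi_1(C_g), \SL_n(\bC))$ into quantitative representation-growth bounds for arithmetic subgroups of $\SL_n$. In \cite{AA1} this framework was set up under the hypothesis that rational singularities hold for some fixed $g$, and applied with $g = 12$ to obtain the bound $22$. Since Theorem \ref{thrmSLD} establishes the required rational singularities down to $g = 2$, I expect the same machinery, applied with $g = 2$, to yield the sharpened bound $\le 2$.

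I would organize the argument in three stages. First, reduce globally to locally: by the congruence subgroup property of Bass--Milnor--Serre and Matsumoto, which holds for $\SL_n(O_{k,T})$ outside precisely the exceptional cases listed in the statement, the representation zeta function of $\Gamma = \SL_n(O_{k,T})$ factors (up to a finite correction) as
$$
\zeta_\Gamma(s) \;=\; \zeta_{\Gamma_\infty}(s)\cdot \prod_{v\notin T}\zeta_{\SL_n(\mathcal{O}_{k,v})}(s),
$$
so that $\alpha(\Gamma) \le \max\bigl(\alpha_\infty,\ \sup_{v\notin T}\alpha(\SL_n(\mathcal{O}_{k,v}))\bigr)$; the Archimedean contribution is controlled by Weyl-type dimension formulas for $\SL_n(\bR)$ and $\SL_n(\bC)$. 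Second, apply Theorem \ref{thrmGAM1} to each non-Archimedean factor to obtain $\alpha(\SL_n(\mathcal{O}_{k,v})) < 2$ at every place. Third, upgrade this pointwise bound to a uniform bound by spreading the rational-singularity statement of Theorem \ref{thrmSLD} out over $\spec \bZ$: the $\bQ$-variety $\homo(\pi_1(C_g), \SL_n)$ has rational singularities (by the remark preceding Theorem \ref{thrmGAM0}), hence so do its reductions modulo $p$ for all $p$ outside a finite set $S$, and the Aizenbud--Avni Igusa-type estimates are then uniform in the residue characteristic.

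The main obstacle is precisely this last uniformity step. The local bound of Theorem \ref{thrmGAM1} is pointwise in $v$, and to pass from strict inequality $< 2$ at each place to a uniform inequality $\le 2$ globally one must control the constants in the Igusa-type integrals on the representation variety as $p$ varies. Identifying the excluded finite set $S$ amounts to specifying the primes of bad reduction for an integral model of the commutator variety, together with the primes where the CSP reduction introduces auxiliary defects. Once this good-reduction analysis is carried out, the final bound $\alpha(\SL_n(\bZ)) \le 2$ for $n \ge 3$ is an immediate specialization to $k = \bQ$, $T = \{\infty\}$.
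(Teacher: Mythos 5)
Your proposal matches the paper's approach: the proof given there is simply to feed Theorem \ref{thrmSLD} (rational singularities for $g=2$) into \cite[Theorem II]{AA1}, which is exactly the Aizenbud--Avni machinery you describe, and your three stages (CSP/Euler factorization, local bounds, uniformity in the residue characteristic via spreading out the rational-singularities statement) are a faithful sketch of what that cited theorem's proof does internally, with the bound $2g-2$ specializing to $2$ at $g=2$. The paper treats this as a black box, so no further argument is needed beyond the citation.
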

In \cite{AA, AA1}, the abscissae in these theorems  have been bounded above by 22, instead of 2.

Another consequence of Theorem \ref{thrmSLD} is:

\begin{theorem}\label{thrmGAM4}

Let $g\ge 2$, $n\ge 1$ be integers. The map of $\bQ$-varieties
$$
\SL_n^{\times 2g}\ra \SL_n,\quad (x_1,y_1,\ldots,x_g,y_g)\mapsto \prod_{i=1}^g x_iy_ix_i^{-1}y_i^{-1}$$
is flat and the inverse image of every $z\in\SL_n(\bC)$ is a variety with rational singularities.
\end{theorem}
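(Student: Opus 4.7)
The plan is to mirror the proof strategy of Theorem \ref{thrmSLD} and extend the Kaledin--Lehn--Sorger \cite{KLS} local model from the fiber over the identity to an arbitrary fiber $\mu^{-1}(z)$, where $\mu\colon\SL_n^{\times 2g}\ra\SL_n$ denotes the commutator-product map. Since $\mu$ is the group-valued moment map for the simultaneous conjugation action of $\SL_n$ on $\SL_n^{\times 2g}$ (in the sense of Alekseev--Malkin--Meinrenken), every fiber $\mu^{-1}(z)$ should admit, at each closed orbit of the centralizer $Z_{\SL_n}(z)$, the same quiver-theoretic local description that underlies the proofs of Theorems \ref{thrmGLRat} and \ref{thrmSLD}.

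The first step is to establish this local model. Fix $z\in\SL_n(\bC)$ and a semisimple point $\rho\in\mu^{-1}(z)$, meaning the subgroup of $\SL_n$ generated by the $2g$ matrices of $\rho$ acts completely reducibly on $\bC^n$; such $\rho$ represent the closed $Z_{\SL_n}(z)$-orbits in $\mu^{-1}(z)$. Applying Luna's \'etale slice theorem together with symplectic reduction \`a la \cite{KLS}, one expects an \'etale-local identification of the germ $(\mu^{-1}(z),\rho)$ with a product of a smooth factor and the germ at $0$ of a quiver moment map zero locus $X(Q',\al')$: the quiver $Q'$ has one vertex per isotypic component of $\rho$ and $g$ loops per vertex, with dimension vector $\al'$ recording the multiplicities. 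These are precisely the $(Q',\al')$ that arise from the one-vertex $g$-loop quiver via the reductions used in the proof of Theorem \ref{thrmRSQ}, so that theorem supplies rational singularities of $\mu^{-1}(z)$ at $\rho$. Since every irreducible component of $\mu^{-1}(z)$ contains a closed $Z_{\SL_n}(z)$-orbit and rational singularities spread openly, this yields rational singularities on all of $\mu^{-1}(z)$.

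For flatness the plan is to invoke the miracle flatness criterion: $\SL_n^{\times 2g}$ is smooth (hence Cohen--Macaulay) and $\SL_n$ is regular, while $\mu$ is surjective by Goto's theorem that every element of a connected complex semisimple group is a single commutator. It therefore suffices to check that every fiber is equidimensional of the expected codimension $\dim\SL_n=n^2-1$. This is immediate from the local model, since each germ $X(Q',\al')$ is a complete intersection of the expected dimension by Crawley-Boevey \cite{CB}; thus $\dim\mu^{-1}(z)=(2g-1)(n^2-1)$ uniformly in $z$, and flatness follows.

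The main obstacle I foresee is rigorously establishing the KLS-type \'etale model for non-central $z$. For $z$ in the center of $\SL_n$ the fiber is a representation variety of a central extension of $\pi_1(C_g)$, and the argument used for Theorem \ref{thrmGLRat} transfers almost verbatim. For non-central $z$, one must work inside the smaller centralizer $Z_{\SL_n}(z)$ and use a group-valued (quasi-Hamiltonian) slice theorem; the delicate point is to verify that the quivers appearing at each such slice still lie in the class to which Theorem \ref{thrmRSQ} applies, i.e., arise by the same sequence of reductions from a one-vertex quiver with $g\ge 2$ loops. Once this local description is secured, every other step transfers directly from the proofs of Theorems \ref{thrmGLRat} and \ref{thrmSLD}.
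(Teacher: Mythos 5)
Your argument has a genuine gap at the step you yourself flag as the ``main obstacle'': the \'etale-local quiver model for the fiber $\mu^{-1}(z)$ at a non-central $z$ is asserted (``one expects\dots''), not proved. The model used in the paper for the fiber over the identity (Proposition \ref{propDEF} and Corollary \ref{thrmXQg}) rests on Simpson's formality of the controlling differential graded Lie algebra for semisimple representations of $\pi_1(C_g)$; points of $\mu^{-1}(z)$ for non-central $z$ are not representations of $\pi_1(C_g)$ (nor of a central extension of it), so that formality input is simply not available, and no quasi-Hamiltonian substitute is supplied. Without it you have neither the identification of the local quivers, nor the verification that they lie in the class $\mathscr{C}$ to which Theorem \ref{thrmRSQ} applies, nor the equidimensionality of fibers that your miracle-flatness argument needs. (As a smaller point, your description of the local quiver as having ``$g$ loops per vertex'' is also not the one from Definition \ref{defCB}, which assigns $p_Q(\beta^{(i)})=1+(g-1)\beta_i^2$ loops to vertex $i$.)

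The paper takes a completely different and much shorter route that avoids all of this: Theorem \ref{thrmGAM4} follows directly from Theorem \ref{thrmSLD} together with Theorem IV of Aizenbud--Avni \cite{AA}, which states precisely that if the fiber of such a commutator-product map over the identity has rational singularities, then the map is flat and \emph{every} fiber has rational singularities (their ``(FRS)'' property). In other words, the passage from the identity fiber to arbitrary fibers is exactly the content of the cited external theorem, and it is the piece your proposal attempts to rebuild from scratch without success. If you want a self-contained geometric argument for arbitrary $z$, you would need to first establish a KLS-type local model for twisted character varieties, which is a substantial additional project.
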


This theorem has been proven in \cite{AA} for $g\ge 12$.

 Theorems \ref{thrmGAM0}-\ref{thrmGAM4}, together with Theorem \ref{thrmSLD} which implies them, were claimed as stated here in \cite{BZo}. However, \cite{BZo} contains a gap in an argument involving $p$-adic group theory which has not been fixed yet, see \cite{BZo-er}. Theorem \ref{thrmSLD} here implies that all the claims of \cite[\S 1]{BZo} nevertheless hold, except \cite[Proposition 1.5]{BZo} which remains open.

\smallskip
In connection with the last theorem, we can add:

\begin{theorem} Let $g\ge 2$, $n\ge 1$ be integers.

(a) The map of $\bQ$-varieties
$$
\Mat(n)^{\times 2g}\ra \Mat(n),\quad (x_1,y_1,\ldots,x_g,y_g)\mapsto \sum_{i=1}^g [x_i,y_i]$$
it is flat and the inverse image of every point $z\in \Mat(n,\bC)$ in some open neighborhood of $0$ is a variety with rational singularities.

(b)  The map of $\bQ$-varieties
$$
\GL_n^{\times 2g}\ra \GL_n,\quad (x_1,y_1,\ldots,x_g,y_g)\mapsto \prod_{i=1}^g x_iy_ix_i^{-1}y_i^{-1}$$
has the following property over a open neighborhood of $I\in \GL_n$: it is flat and the inverse image of every $z\in\GL_n(\bC)$ is a variety with rational singularities.
\end{theorem}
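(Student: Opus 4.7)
My plan is to deduce both parts from the rational singularities theorems \ref{thrmRSQ} and \ref{thrmGLRat}, combined with two classical principles: Hironaka's miracle flatness criterion, and Elkik's theorem on the openness of rational singularities in flat families. The two parts have parallel proofs, so I describe (a) in detail and indicate the changes needed for (b).

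For (a), let $\mu : X = \Mat(n)^{\times 2g} \to \Mat(n)$ be the commutator-sum map. Since commutators are traceless, $\mu$ factors through the smooth hyperplane $Y \subset \Mat(n)$ of dimension $n^2 - 1$. By Crawley-Boevey \cite{CB}, the fiber $\mu^{-1}(0)$ is a complete intersection of dimension $(2g-1)n^2 + 1 = \dim X - \dim Y$. Since $X$ is smooth and $Y$ is regular, Hironaka's miracle flatness criterion applies at every point of $\mu^{-1}(0)$, giving that $\mu$ is flat at each such point. A standard argument combining this pointwise flatness with generic flatness and semi-continuity of fiber dimension then produces an open neighborhood $U$ of $0 \in Y$ over which $\mu$ is flat.

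Next I invoke Elkik's theorem: in a flat family over a smooth base whose central fiber has only rational singularities, all sufficiently nearby fibers also have only rational singularities. By Theorem \ref{thrmRSQ}, $\mu^{-1}(0)$ has rational singularities; hence, after shrinking $U$ if necessary, every fiber $\mu^{-1}(z)$ with $z \in U$ has only rational singularities and is therefore normal of the expected dimension. A short additional step, using irreducibility of the central fiber together with the $\bC^*$-action rescaling each $(x_i, y_i)$ by $t$, confirms that these fibers are also irreducible, hence are varieties. Part (b) is handled identically for the product-of-commutators map $\rho : \GL_n^{\times 2g} \to \GL_n$, whose image is $\SL_n$ (smooth of dimension $n^2-1$): Theorem \ref{thrmGLRat} replaces Theorem \ref{thrmRSQ}, and Simpson's complete intersection result \cite{Si} plays the role of \cite{CB}.

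The main obstacle is justifying the passage from pointwise flatness at every point of the single fiber $\mu^{-1}(0)$ to flatness over a Zariski-open neighborhood of $0$ in the target, since $\mu$ is not proper and so the non-flat locus in $Y$ is only constructible a priori. This is resolved by a familiar argument in the quiver moment map literature, cf.\ \cite{CB-norm}: generic flatness combined with a dimension count via Chevalley's theorem shows that $0$ lies in the interior of the flatness locus in $Y$, after which Elkik's theorem supplies the openness of the rational singularities locus for free.
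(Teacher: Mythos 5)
Part (a) of your argument is essentially the paper's: flatness is exactly \cite[Theorem 1.1]{CB} for the one-vertex quiver with $g$ loops (your miracle-flatness reconstruction is fine, but note that the ``main obstacle'' you identify is resolved precisely by the conical structure: the non-flat locus is closed and invariant under the scaling $(x_i,y_i)\mapsto(tx_i,ty_i)$, so if nonempty it would contain $0$ and hence meet $\mu^{-1}(0)$, contradicting miracle flatness there; this gives flatness \emph{everywhere}, which is what the theorem asserts, not just near $0$), and the rational singularities of nearby fibers then come from Theorem \ref{thrmRSQ} together with \cite[Th\'eor\`eme 4]{El}, exactly as in the paper.

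The genuine gap is part (b), which you declare is ``handled identically.'' It is not: the only mechanism that converts flatness \emph{at the points of the central fiber} into flatness \emph{over a neighborhood in the target} is the $\bG_m$-contraction of $\Mat(n)^{\times 2g}$ onto $0$, and the word map $\rho:\GL_n^{\times 2g}\to\SL_n$ admits no contracting action onto $\rho^{-1}(I)$. Miracle flatness (via Simpson) does give that $\rho$ is flat at every point of $\rho^{-1}(I)$, so the non-flat locus $C$ is closed and disjoint from $\rho^{-1}(I)$; but $\rho$ is not proper, so $\rho(C)$ is merely constructible, and nothing in your argument excludes $I\in\overline{\rho(C)}$ --- a conjugation-invariant constructible subset of $\SL_n$ avoiding $I$ can still accumulate at $I$ (think of the nontrivial unipotent classes). ``Generic flatness plus Chevalley'' does not close this, and that is exactly why the paper does not run a parallel argument: it deduces (b) from (a) by invoking \cite[2.3.1]{AA}, where the multiplicative commutator is degenerated to the additive one near the identity (substituting $x=I+ta$, $y=I+tb$ and rescaling), and flatness-with-rational-singularities is shown to be inherited under such a degeneration. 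You need either to reproduce that deformation argument or to supply some other reason why $I$ avoids the closure of the image of the non-flat locus. A smaller loose end, common to both parts: Elkik gives rational singularities, hence normality, of nearby fibers, but irreducibility (needed for ``variety'') does not follow from the $\bC^*$-action relating a nonzero fiber to itself; it requires a separate connectedness argument or the irreducibility statements of \cite{CB}.
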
 

The flatness in (a) is a particular case of \cite[Theorem 1.1]{CB} for the quiver with one vertex and $g$ loops. Hence (a) follows from Theorem \ref{thrmRSQ} here and the open nature of rational singularities in flat families \cite[Th\'eor\'eme 4]{El}. It is showed in \cite[2.3.1]{AA} that (a) implies (b).

Finally, the Isosingularity Principle of Simpson \cite[Theorem 10.6]{Si} applies to transfer the rational singularities property from representations spaces of fundamental groups to the de Rham and Dolbeault analogs, we refer to \cite{Si} for the precise definitions:

\begin{theorem}
Let $g\ge 2$, $n\ge 1$ be integers. Let $G$ be $\GL_n$ or $\SL_n$. Let $C_g$ be a compact Riemann surface and $x\in C_g$ a fixed point. Let $R_{DR}(C_g,G)$ be the fine moduli space of principal $G$-bundles with integrable connection and a frame over $x$. Let $R_{Dol}(C_g,G)$ be the fine moduli space of principal Higgs bundles for the group $G$ with a frame over $x$, which are semistable with vanishing rational Chern classes. Then  $R_{DR}(C_g,G)$ and $R_{Dol}(C_g,G)$ have rational singularities over $\bC$.
\end{theorem}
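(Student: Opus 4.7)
The plan is to invoke Simpson's Isosingularity Principle \cite[Theorem 10.6]{Si} as a black box, using as input the rational singularity property for the Betti moduli space that we established in Theorems \ref{thrmGLRat} and \ref{thrmSLD}. Recall that Simpson's result compares the three framed representation spaces $R_B(C_g,G)$, $R_{DR}(C_g,G)$, $R_{Dol}(C_g,G)$ pointwise via the Riemann--Hilbert and nonabelian Hodge correspondences: at a pair of corresponding points (a representation $\rho$, an integrable connection $\nabla$, and a semistable Higgs bundle $(E,\theta)$ with vanishing rational Chern classes), the formal (or analytic) germs of the three moduli spaces are isomorphic, possibly after multiplying by smooth factors of the same dimension. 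The frame at the basepoint $x\in C_g$ is what rigidifies the moduli problem and makes each of them a fine moduli space, so that the germ-level comparison is sharp.

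With this in hand, the argument is short. First, by Theorem \ref{thrmGLRat} for $G=\GL_n$ and by Theorem \ref{thrmSLD} for $G=\SL_n$, the Betti space $R_B(C_g,G)=\homo(\pi_1(C_g),G(\bC))$ has rational singularities at every point. Since rational singularity is a local property in the analytic (equivalently, \'etale or formal) topology, and is preserved both under passage to formal completions and under smooth base change (in particular under products with smooth varieties), the isosingularity isomorphism between formal germs transfers the rational singularity property from $R_B$ to $R_{DR}$ and to $R_{Dol}$ at every closed point. Thus $R_{DR}(C_g,G)$ and $R_{Dol}(C_g,G)$ have rational singularities everywhere.

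The main point to check is simply that Simpson's Isosingularity Principle as stated in \cite{Si} applies in the framed setting we have named here and for both groups $\GL_n$ and $\SL_n$. For $\GL_n$ this is Simpson's own setting. For $\SL_n$ it follows because the determinant conditions cut out closed subvarieties of the $\GL_n$-moduli that are matched by the Riemann--Hilbert and nonabelian Hodge correspondences (a flat bundle has trivial determinant iff the corresponding representation does, and similarly for Higgs bundles), so the germ-level comparison restricts to $R_{?}(C_g,\SL_n)$. Once this is noted, no further work is needed, since Theorem \ref{thrmSLD} provides the only nontrivial input.
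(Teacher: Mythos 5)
Your proposal is correct and is essentially the paper's own argument: the paper likewise deduces the theorem in one stroke from Simpson's Isosingularity Principle \cite[Theorem 10.6]{Si}, taking Theorems \ref{thrmGLRat} and \ref{thrmSLD} as the input on the Betti side and using that rational singularities are a formal-local notion stable under smooth factors. (Your extra remark on $\SL_n$ is harmless but unnecessary, since Simpson's principle is stated for an arbitrary reductive group $G$.)
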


Since GIT quotients of rational singularities are also rational singularities, one obtains as corollaries of our results that for $g\ge 2$ and $G=\GL_n(\bC), \SL_n(\bC)$ the following schemes have rational singularities: the Mardsen-Weinstein reduction $X\sslash G$, the moduli of local systems $M_{B}(C_g,G)$, the moduli of bundles with integrable connections $M_{DR}(C_g,G)$, and the moduli of Higgs bundles with vanishing Chern classes $M_{Dol}(C_g,G)$. This is already known from the fact that all these spaces have symplectic singularities, see \cite[Corollary 8.4]{CB-norm}, \cite{BS}, \cite{Tir}, and our results give a different explanation.

In Section \ref{s2}, we recall the description of the \'etale slices for zeros of moment maps of quivers due to Crawley-Boevey, and prove some preliminary results on the quiver $Q$. In Section \ref{s3}, the heart of the article, we prove Theorem \ref{thrmRSQ}. In Section \ref{s4} we prove Theorem \ref{thrmGLRat} using deformation theory. In Section \ref{s5} we prove Theorem \ref{thrmSLD}. In Section \ref{secAAG} we address Theorems \ref{thrmGAM0}-\ref{thrmGAM4}.

 {\it Acknowledgement.}  We thank  M. Zordan, R. Zhao, and the referees for useful comments. The author was partly sponsored by the Methusalem grant METH/15/026 from KU Leuven, and by the research projects G0B2115N, G0F4216N, G097819N from the Research Foundation of Flanders.

\section{Quiver moment maps}\label{s2}

We work over a fixed algebraically closed field $k$ of characteristic zero. A quiver $Q$ is a set of directed arrows between a set $I$ of vertices. The representations of $Q$ of dimension vector $\al\in\bN^I\setminus\{0\}$ are the elements of
$$
\Rep(Q,\al)=\bigoplus_{a\in Q} \Mat (\al_{h(a)}\times \al_{t(a)},k),
$$
where $h(a)$ is the head vertex of the arrow $a$, and $t(a)$ is the tail. The group
$$
G(\al)=\left(\prod_{i\in I}\GL_{\al_i}(k) \right)/k^*
$$
acts by conjugation.

A morphism between two quiver representations $x\in \Rep(Q,\al)$ and $y\in\Rep(Q,\beta)$ is an element 
$$f\in \bigoplus_{i\in I}\Mat(\al_i\times \beta_i, k)$$
such that $fx=yf$. That is, $f_{h(a)}x_a=y_af_{t(a)}$ for all $a\in Q$.  

The orbits of $G(\al)$ on $\Rep(Q,\al)$ correspond to isomorphism classes of representations. 

The category of representations of a quiver $Q$ is equivalent to the category of left $kQ$-modules, where $kQ$ is the path algebra of $Q$. 


A representation in $\Rep(Q,\al)$ is called simple if the associated  $kQ$-module is simple, and semisimple if it is a direct sum of simple representations.

Let $p_Q(\al)=1-\langle\al,\al\rangle_Q$, where 
$$
\langle \al,\beta \rangle_Q = \sum_{i\in I}\al_i\beta_i-\sum_{a\in Q}\al_{t(a)}\beta_{h(a)}.
$$
is the  pairing on $\bZ^I$ associated to $Q$. We also define $(\al,\beta)_Q=\langle \al,\beta \rangle_Q+\langle \beta,\al \rangle_Q$. 

The double of $Q$ is the quiver $\ol{Q}$ obtained by adjoining a reverse arrow $a^*$ for each $a\in Q$. Note that
$$
\dim \Rep(\ol{Q},\al) = 2(\al\cdot\al -1 +p_Q(\al)).
$$

There is a $G(\al)$-equivariant moment map
$$
\mu:\Rep(\ol{Q},\al)\lra \ehom(\al)=\bigoplus_{i\in I}\Mat (\al_i,k)
$$
defined by
$$
x\mapsto \sum_{a\in Q}[x_a,x_{a^*}]
$$
where $[x,y]=xy-yx$. We denote the zero locus  by
$$
X(Q,\al) = \mu^{-1}(0)
$$
and we consider it as a closed subscheme; it does not depend on the orientation of the arrows of $Q$ (see \cite[Lemma 2.2]{CBH}). The underlying reduced subscheme of $X(Q,\al)$ is irreducible, but not always a normal variety; see \cite[\S 1]{CB-norm}.

The affine quotient
$$
M(Q,\al) = X(Q,\al)\sslash G(\al)
$$
parametrises isomorphism classes of semisimple representations in $X(Q,\al)$, or, equivalently, the closed orbits of $G(\al)$ on $X(Q,\al)$. Denote by $$q:X(Q,\al)\ra M(Q,\al)$$ the quotient morphism. Every fiber of $q$ contains a unique closed orbit.

We will use later the following results of Crawley-Boevey from Theorem 1.2, Theorem 1.3, Corollary 1.4, Lemma 6.5, and Proof of Corollary 1.4 on p289 of \cite{CB}:

\begin{theorem}\label{thrmCBd} (Crawley-Boevey) If $X(Q,\al)$ contains a simple representation in $\Rep(\ol{Q},\al)$, then:

(a) $X(Q,\al)$ is a reduced and irreducible  complete intersection of dimension $\al\cdot \al-1+2p_Q(\al)$, 

(b) the general element of $X(Q,\al)$ is a simple representation,

(c) the dimension of $M(Q,\al)$ is $2p_Q(\al)$,

(d) $p_Q(\al)>0$ if and only if $M(Q,\al)$ contains an open dense subset of isomorphism classes of simple representations.

(e) The simple representations in $X(Q,\al)$ are smooth points.

\end{theorem}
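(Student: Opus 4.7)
The plan is to exploit the symplectic/moment map character of $\mu$. First I would compute the differential $d\mu_x : T_x \Rep(\ol{Q},\al) \to \ehom(\al)$ at an arbitrary $x$. A standard calculation identifies, via the trace pairing on $\ehom(\al)$, the image of $d\mu_x$ with the annihilator of the stabiliser Lie algebra $\mathfrak{g}_x \subset \operatorname{Lie} G(\al)$. Since $\mu$ lands in the trace-zero hyperplane $\ehom(\al)_0 \subset \ehom(\al)$ of dimension $\al\cdot\al - 1$, this is the optimal image. At a simple representation $x$, Schur's lemma gives $\ehom_{kQ}(x) = k \cdot I$, so $G_x$ is trivial in $G(\al) = (\prod \GL_{\al_i})/k^*$ and $d\mu_x$ is surjective onto $\ehom(\al)_0$. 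Consequently $X(Q,\al) = \mu^{-1}(0)$ is smooth at every simple representation, of the expected dimension
$$
\dim \Rep(\ol{Q},\al) - (\al \cdot \al - 1) = 2(\al \cdot \al - 1 + p_Q(\al)) - (\al \cdot \al - 1) = \al \cdot \al - 1 + 2p_Q(\al),
$$
which establishes (e) and supplies the local dimension count needed for (a).

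Next I would establish (b) and complete (a) by showing that the simple locus is dense in $X(Q,\al)$. Openness of the simple locus in $\Rep(\ol{Q},\al)$ is classical and restricts to $X(Q,\al)$. For density, I stratify $X(Q,\al)$ by the isomorphism type $\tau = (n_1,\beta_1;\ldots;n_k,\beta_k)$ of the semisimplification of each representation, and at a closed semisimple orbit of each type apply Luna's \'etale slice theorem: as to be recalled in \S\ref{s2}, locally and up to a smooth factor, $X(Q,\al)$ is modeled on the germ at $0$ of $X(Q_\tau,\al_\tau)$ for a smaller ``local'' quiver and dimension vector built from $\tau$ and the Euler form. Combining this with the global lower bound $\dim X(Q,\al) \ge \al \cdot \al - 1 + 2p_Q(\al)$ coming from the codimension of $\ehom(\al)_0$, together with a combinatorial comparison between $p_{Q_\tau}(\al_\tau)$ and $p_Q(\al)$, shows that every non-simple stratum has strictly smaller dimension. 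Hence the simple locus is dense, and $X(Q,\al)$ is a generically smooth reduced irreducible complete intersection, finishing (a) and (b).

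The remaining parts follow by dimension bookkeeping. For (c), Schur's lemma yields a free $G(\al)$-action on the dense open simple locus, so
$$
\dim M(Q,\al) = \dim X(Q,\al) - \dim G(\al) = (\al\cdot\al - 1 + 2p_Q(\al)) - (\al\cdot\al - 1) = 2p_Q(\al).
$$
For (d), if $p_Q(\al) > 0$ then $M(Q,\al)$ is positive-dimensional, and the image under $q$ of the dense open simple locus of $X(Q,\al)$ is a dense open subset of $M(Q,\al)$ consisting of isomorphism classes of simples; the converse comes from the fact that $p_Q(\al) \le 0$ forces $\dim M(Q,\al) \le 0$, ruling out any positive-dimensional family of isomorphism classes of simples. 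The main obstacle in this plan is the density step for (b): controlling every non-simple stratum simultaneously requires the inductive Luna-slice reduction combined with a careful use of the bilinear form $p_Q$ and the canonical decomposition of $\al$, and is precisely where Crawley-Boevey's combinatorial machinery becomes indispensable.
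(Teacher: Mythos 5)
This theorem is not proved in the paper at all: it is quoted from Crawley-Boevey \cite{CB} (Theorems 1.2 and 1.3, Corollary 1.4 and Lemma 6.5 there), so there is no internal proof to compare yours against; I can only assess your sketch on its own terms. Your opening computation is correct and standard: under the trace pairing the image of $d\mu_x$ is the annihilator of the stabiliser Lie algebra $\mathfrak{g}_x$, Schur's lemma makes $\mathfrak{g}_x$ trivial in $\operatorname{Lie}G(\al)$ at a simple $x$, so $\mu$ is submersive onto the trace-zero hyperplane there. This gives (e), the dimension count, and also the lower bound $\dim_x X(Q,\al)\ge \al\cdot\al-1+2p_Q(\al)$ at every point, since $X(Q,\al)$ is cut out by $\al\cdot\al-1$ equations in $\Rep(\ol{Q},\al)$. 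Parts (c) and the forward direction of (d) do follow by the bookkeeping you describe once (a) and (b) are in hand.

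The genuine gap is the density-and-irreducibility step, which is the actual content of the theorem. First, the claim that every stratum of non-simple representation type has dimension strictly less than $\al\cdot\al-1+2p_Q(\al)$ is not a formal consequence of the Luna-slice local model plus ``a combinatorial comparison between $p_{Q_\tau}(e)$ and $p_Q(\al)$'': it requires Crawley-Boevey's characterisation of which dimension vectors carry simple representations in $\mu^{-1}(0)$ (the condition $p_Q(\al)>\sum_i p_Q(\beta^{(i)})$ over proper decompositions into positive roots) together with his explicit stratum dimension estimates, which occupy several sections of \cite{CB}; you correctly flag this as the obstacle, but it means the plan defers precisely the hard part. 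Second, even granting that the smooth simple locus is open and dense, this only identifies the irreducible components of $X(Q,\al)$ with the connected components of the simple locus; irreducibility needs a separate argument that the simple locus is connected, which your sketch never addresses. Third, reducedness should be made explicit: a complete intersection is Cohen--Macaulay, hence has no embedded components, so generic smoothness suffices --- say so rather than folding it into ``generically smooth reduced''. Finally, your ``only if'' argument for (d) is too quick in the boundary case $p_Q(\al)=0$, where $M(Q,\al)$ is a single point which is itself the class of a simple representation; the zero-dimensional case has to be handled by the statement's intended reading, not by the dimension count alone.
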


Given a semisimple representation $x\in X(Q,\al)$ of $\ol{Q}$, we can decompose it  into its simple components
$$
x\simeq x_1^{\oplus e_1}\oplus\cdots\oplus x_r^{\oplus e_r},
$$
where  $x_j$ are non-isomorphic simple representations. If $\beta^{(j)}$ is the dimension vector of $x_j$, one says that {\it $x$ has representation type}
$$
\tau=(e_1,\beta^{(1)};\cdots;e_r,\beta^{(r)}),
$$
and that $\tau$ {\it occurs as a representation type in} $X(Q,\al)$. Set $$e=(e_1,\ldots ,e_r).$$ Then $G(e)$ is a conjugate of  $G(\al)_x$, the stabilizer  of $x$ under the action of $G(\al)$, for a suitable embedding of $G(e)$ in $G(\al)$,  \cite[Theorem 2]{LP}. One obtains a finite stratification into disjoint irreducible locally closed subsets $$M(Q,\al)=\coprod_\tau M(Q,\al)_\tau,$$ where $M(Q,\al)_\tau$ is the subset of isomorphism classes of semisimple representations of type $\tau$, see \cite[\S 11]{CB}. This is the same as the stratification according to conjugacy classes of stabilizers of the semisimple representations, see \cite[Theorem 2]{LP} and \cite[Theorem 5.4]{Ma}:

\begin{thrm}\label{thrmSTST} (i) The assignments $\tau \mapsfrom x \mapsto G(\al)_x$ give a one-to-one correspondence between the set of representation types $\tau$ occurring in $X(Q,\al)$ and the set of conjugacy classes of stabilizers of semisimple points in $X(Q,\al)$.

(ii) For $\tau$ and $\tau'$ representations types corresponding to conjugacy classes of stabilizers $H$ and $H'$, respectively, define $$\tau'\le \tau \iff H \text{ is conjugate to a subgroup of }H'.$$  Then
$$\ol{M(Q,\al)_\tau}=\coprod_{\tau'\le \tau} M(Q,\al)_{\tau'}.$$   
\end{thrm}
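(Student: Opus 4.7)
The plan is to deduce (i) from the stabilizer computation already recalled from \cite[Theorem 2]{LP}, and to prove (ii) by combining upper semicontinuity of stabilizers under specialization with the construction of explicit degenerations, the latter reduced to a local statement via Luna's \'etale slice theorem.

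For (i), given a semisimple $x\in X(Q,\al)$ of type $\tau=(e_1,\beta^{(1)};\ldots;e_r,\beta^{(r)})$ and a decomposition $x\cong \bigoplus_j V_j^{\oplus e_j}$ into pairwise non-isomorphic simples, the quoted result of Le Bruyn--Procesi identifies $G(\al)_x$ with the block-diagonal embedding of $\prod_j\GL_{e_j}(k)$ into $\prod_i\GL_{\al_i}(k)$, modulo $k^*$, attached to the isotypic decomposition $k^{\al_i}=\bigoplus_j(k^{\beta^{(j)}_i})^{\oplus e_j}$. Since any two such decompositions differ by an element of $\prod_i\GL_{\al_i}(k)$, the conjugacy class of $G(\al)_x$ depends only on $\tau$. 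Conversely, the conjugacy class of $G(\al)_x$ recovers, up to reordering, both the multiplicities $e_j$ and the dimension vectors $\beta^{(j)}$, and hence $\tau$; this yields the bijection of (i).

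For (ii), the relation $\le$ is readily a partial order on representation types via (i). The inclusion $\ol{M(Q,\al)_\tau}\subseteq \coprod_{\tau'\le\tau} M(Q,\al)_{\tau'}$ follows from the standard upper semicontinuity of stabilizers for a reductive group acting on an affine variety: if $G(\al)\cdot x$ and $G(\al)\cdot x'$ are both closed orbits with $x'\in \ol{G(\al)\cdot x}$, then $G(\al)_{x'}$ contains a conjugate of $G(\al)_x$, which by (i) means $\tau'\le\tau$. The reverse inclusion requires, for every $\tau'\le\tau$, exhibiting a family of semisimple representations of type $\tau$ degenerating to a prescribed one of type $\tau'$. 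When $\tau'$ is obtained from $\tau$ by merging several simples of a common dimension vector into a single isotypic block of larger multiplicity, one seeks a one-parameter family of pairwise non-isomorphic simples whose classes in $M(Q,\beta)$ collide at $t=0$ and takes direct sums with the prescribed multiplicities.

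The main obstacle is producing these degenerations uniformly for all $\tau'\le\tau$. The cleanest resolution is to invoke Luna's \'etale slice theorem at a semisimple point of type $\tau'$: $M(Q,\al)$ is \'etale-locally modeled, up to a smooth factor, by $M(Q',\al')$ for a smaller quiver in which the origin plays the role of $\tau'$ and every type $\tau$ with $\tau'\le\tau$ appears as a nearby point. Verifying the existence of such nearby points reduces to a combinatorial statement about $(Q',\al')$, which can be checked directly using that $M(Q',\beta)$ contains simple representations for the relevant smaller dimension vectors (cf.\ Theorem \ref{thrmCBd}).
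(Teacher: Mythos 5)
First, a remark on the comparison: the paper does not actually prove this statement, but quotes it from the literature, attributing it to \cite[Theorem 2]{LP} and \cite[Theorem 5.4]{Ma}. So you are attempting a from-scratch argument. Your part (i) is essentially fine: the conjugacy class of the embedded subgroup $\prod_j\GL_{e_j}(k)/k^*$ determines, for each vertex $i$, the isotypic decomposition of $k^{\al_i}$ under that subgroup, hence the multiplicities $\beta^{(j)}_i$ and $e_j$, so $\tau$ is recovered up to reordering; conversely the conjugacy class depends only on $\tau$.

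Part (ii) has two problems. The easy inclusion is misjustified as written: you assume ``$G(\al)x$ and $G(\al)x'$ are both closed orbits with $x'\in\ol{G(\al)x}$'', but a closed orbit equals its own closure, so this hypothesis forces $x'\in G(\al)x$ and the assertion becomes vacuous. What is needed is a statement about closures in the quotient: if $q(x')\in\ol{M(Q,\al)_\tau}$, then the saturated slice neighborhood of $x'$ has open image in $M(Q,\al)$ meeting $M(Q,\al)_\tau$, and by \cite[Proposition 5.5]{Dr} every closed orbit in that neighborhood has stabilizer subconjugate to $G(\al)_{x'}$; this is fixable. The serious gap is the reverse inclusion $\coprod_{\tau'\le\tau}M(Q,\al)_{\tau'}\subseteq\ol{M(Q,\al)_\tau}$. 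Your slice-theorem reduction to ``every type $\tau\ge\tau'$ appears arbitrarily close to $0$ in $X(Q_{\tau'},e')$'' presupposes the matching of representation types between $X(Q,\al)$ near a point of type $\tau'$ and $X(Q_{\tau'},e')$ near the origin; but that matching is Proposition \ref{prop1T1}(i) of the paper, whose proof invokes the very theorem you are proving, so the argument is circular as organized. Moreover the final step --- ``checked directly using that $M(Q',\beta)$ contains simple representations for the relevant smaller dimension vectors (cf.\ Theorem \ref{thrmCBd})'' --- is not available: Theorem \ref{thrmCBd} is conditional on $X(Q',\beta)$ already containing a simple representation, and deciding which dimension vectors support simples \emph{inside the moment-map fibre} (so that your degenerating families stay in $X(Q,\al)$ rather than merely in $\Rep(\ol{Q},\al)$, where the analogous closure relation is much easier) is precisely the root-theoretic input from Crawley-Boevey on which Martino's proof of the closure relation rests. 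As it stands, the hard half of (ii) is asserted rather than proved.
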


By Crawley-Boevey, the  local structure at a semisimple representation of $X(Q,\al)$ reduces to that at the origin with respect to a modified quiver and dimension vector. To state this properly, we need to recall some definitions.

\begin{defn}
Let $G$ be a reductive group acting on an affine variety $X$. Let $H$ be a reductive subgroup of $G$. Let $S$ be a locally closed affine $H$-invariant subvariety of $X$. One defines
$
G\times_HS$
and a $G$-equivariant morphism
$$
G\times_HS \ra X
$$
as follows. Define an action of $H$ on $G\times S$ by $h(g,s)=(gh^{-1},hs)$. Then the map $G\times S\ra X$
sending $(g,s)$ to $gs$ is $H$-equivariant, where $H$ acts trivially on $X$. Then
$$
G\times_HS=(G\times S)\sslash H \ra X
$$
is the associated morphism of affine quotients. Then $G\times_H S$ is an \'etale-locally trivial fibration with fiber type $S$ and base the homogeneous space $G/H$ isomorphic as a variety with any orbit of the action by $H$ on $G$ via right translations. Taking further quotients by the $G$-action, one has a morphism
$$
(G\times_HS)\sslash G = S\sslash H \ra X\sslash G,
$$
cf. \cite[I.3]{Lu}.
\end{defn}

\begin{defn}
Let $G$ be a reductive group acting on affine varieties $X$ and $Y$, and let $f:X\ra Y$ be a $G$-equivariant morphism. We say $f$ is {\it strongly \'etale} if 

- $f/G: X\sslash G\ra Y\sslash G$ is \'etale, and

- $f$, $f/G$, and the quotient morphisms induce a $G$-isomorphism $X\simeq Y\times_{Y\sslash G}(X\sslash G)$. 
\end{defn}

\begin{prop}\label{propDRE}$\rm{(}$\cite[Proposition 4.15]{Dr}$\rm{)}$
If $f:X\ra Y$ is a strongly \'etale $G$-morphism then:

- $f$ is \'etale and surjective,

- for every $u\in X\sslash G$, $f$ induces an isomorphism from the inverse image of $u$ in $X$  to the inverse image of $f(u)$ in $Y$.

- for every $x\in X$, the restriction of $f$ to $Gx$ is injective, and $Gx$ is closed if and only if $Gf(x)$ is closed.
\end{prop}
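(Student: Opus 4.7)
The plan is to unpack the three bullets as direct consequences of the single structural fact built into the definition of strongly \'etale: the square formed by $f$, $f/G$, and the two quotient morphisms $\pi_X\colon X\to X\sslash G$ and $\pi_Y\colon Y\to Y\sslash G$ is cartesian, i.e.\ $X \simeq Y\times_{Y\sslash G}(X\sslash G)$ as $G$-varieties, with $f$ and $\pi_X$ corresponding respectively to the two projections. With this identification in place, each bullet becomes a short base-change or universal-property argument.

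For the first bullet, \'etaleness of $f$ is immediate: $f$ is the pullback of the \'etale morphism $f/G$ along $\pi_Y$, and \'etale morphisms are preserved under base change. Surjectivity of $f$ reduces to $\pi_Y^{-1}(\im(f/G))=Y$, which follows once $f/G$ is surjective; this is the standard assumption in the Luna/Drezet slice-theorem framework in which strongly \'etale morphisms are applied, and I would check that it is either implicit in the definition or is otherwise available in our applications.

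For the second bullet, fix $u\in X\sslash G$. The cartesian description identifies the fiber $\pi_X^{-1}(u)$ with $\pi_Y^{-1}((f/G)(u))$, and under this identification $f$ becomes the identity. Hence $f$ restricts to a $G$-equivariant isomorphism between the two fibers, which is exactly the statement of the bullet.

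For the third bullet, injectivity of $f|_{Gx}$ is a direct consequence of the universal property: if $f(gx)=f(g'x)$, then $gx$ and $g'x$ have the same image in $Y$ (via $f$) and in $X\sslash G$ (both are $\pi_X(x)$), so they coincide in $X$. For the closedness equivalence I would invoke the standard fact that, for a reductive group acting on an affine variety, $\overline{Gx}\subseteq \pi_X^{-1}(\pi_X(x))$, so an orbit in $X$ is closed in $X$ iff it is closed in its quotient fiber; since the isomorphism of fibers from the second bullet is $G$-equivariant and sends $Gx$ bijectively onto $Gf(x)$, the two orbits are simultaneously closed or not. The only real bookkeeping hurdle is confirming that the cartesian isomorphism is genuinely $G$-equivariant for use in the injectivity and closedness arguments, but this is precisely the content of the ``$G$-isomorphism'' clause in the definition of strongly \'etale, so no serious obstacle arises.
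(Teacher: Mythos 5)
Your argument is correct, and it is worth noting that the paper itself offers no proof of this proposition: it is quoted verbatim from Dr\'ezet \cite[Proposition 4.15]{Dr}, so there is nothing to compare against except the cited source. Your derivation of all three bullets from the cartesian square $X\simeq Y\times_{Y\sslash G}(X\sslash G)$ is the standard one and works: \'etaleness by base change of $f/G$ along $\pi_Y$; the fiber isomorphism $\pi_X^{-1}(u)\xrightarrow{\sim}\pi_Y^{-1}((f/G)(u))$ because $f$ is the first projection and $\pi_X$ the second; injectivity on orbits because two points of an orbit have the same image under $\pi_X$, so they are separated by $f$ alone; and the closedness equivalence because $\overline{Gx}\subseteq\pi_X^{-1}(\pi_X(x))$ for reductive quotients of affine varieties, so closedness can be tested inside the quotient fibers, which $f$ identifies $G$-equivariantly. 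The one point you flag---surjectivity---is a genuine discrepancy between the statement and the definition as recorded in the paper: with the definition as written (take $G$ trivial and $f$ a non-surjective open immersion) surjectivity of $f$ fails, and it holds exactly when $f/G$ is surjective, since $\pi_Y$ is surjective for affine quotients. In Dr\'ezet's and Luna's usage, and in every application in this paper (e.g.\ Theorem \ref{thrmELS}), the strongly \'etale map is taken onto a saturated open image, so surjectivity is arranged by shrinking the target; you were right to isolate this rather than assert it.
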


\begin{lemma}\label{propSE}
If $f:X\ra Y$ is a strongly \'etale $G$-morphism and $x\in X$, then
$G_x=G_{f(x)}$.
\end{lemma}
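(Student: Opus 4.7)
The plan is to show the inclusion $G_x \subseteq G_{f(x)}$ directly from equivariance, and then extract the reverse inclusion $G_{f(x)} \subseteq G_x$ from the injectivity clause of Proposition \ref{propDRE}, which says that the restriction $f|_{Gx}\colon Gx \to Y$ is injective.

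First I would record the easy direction. Since $f$ is $G$-equivariant, any $g \in G_x$ satisfies $g\cdot f(x) = f(g\cdot x) = f(x)$, so $g \in G_{f(x)}$. This requires nothing beyond the definition of a $G$-morphism.

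For the reverse inclusion, suppose $g \in G_{f(x)}$, so $f(g\cdot x) = g\cdot f(x) = f(x) = f(1\cdot x)$. Both $g\cdot x$ and $x$ lie in the orbit $Gx$. Applying the third bullet of Proposition \ref{propDRE}, the restriction $f|_{Gx}$ is injective, so $g\cdot x = x$, i.e., $g \in G_x$. Combining both directions yields $G_x = G_{f(x)}$, which is the desired equality.

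I do not anticipate an obstacle: the lemma is essentially a corollary of Proposition \ref{propDRE}, packaged for later use. The only thing worth being careful about is that we are using set-theoretic stabilizers (which is all that is needed in the application), so there is no scheme-theoretic subtlety to address; the injectivity of $f|_{Gx}$ on points is exactly the input required.
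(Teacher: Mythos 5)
Your proof is correct and matches the paper's argument essentially verbatim: the forward inclusion comes from equivariance, and the reverse inclusion follows from the injectivity of $f$ on the orbit $Gx$ provided by Proposition \ref{propDRE}. No issues.
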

\begin{proof}
Since $f$ is $G$-equivariant, $G_x\subset G_{f(x)}$. Conversely, if $g\in G_{f(x)}$, then $f(x)=gf(x)=f(gx)$. By the above proposition, $f$ is injective on orbits. Hence $x=gx$, and $g\in G_x$.
\end{proof}

\begin{defn}\label{defEt} Let $G$ be a reductive group acting on an affine variety $X$. Let $x\in X$ be a point with closed orbit. The stabilizer $G_x$ of $x$ is a reductive subgroup of $G$. An {\it \'etale slice} is a $G_x$-invariant locally closed affine subvariety $S$ of $X$ containing $x$  such that  the induced $G$-equivariant morphism
$$
\psi:G\times_{G_x}S\ra X
$$
is strongly \'etale onto its image which is a $G$-saturated affine open subset $U$ of $X$.
\end{defn}

Recall that a $G$-saturated set is the inverse image of a subset of $X\sslash G$ under the quotient morphism $X\ra X\sslash G$.

By the main result of Luna \cite{Lu} \'etale slices always exist. Since the orbit $G(\al)x$ is isomorphic to the homogeneous space $G(\al)/G(\al)_x$ of the action of $G(\al)_x$ on $G(\al)$ by right translations, $\psi$ induces locally in the \'etale topology an  isomorphism  
$$(X,x) \simeq (G(\al)x,x)\times (S,x).
$$

\begin{defn}\label{defCB}
Given a semisimple representation $x\in X(Q,\al)$ of representation type $$\tau=(e,\beta)=(e_1,\beta^{(1)};\cdots;e_r,\beta^{(r)}),$$ define a new quiver $Q_\tau$ with $r$ vertices and whose double $\ol{Q_\tau}$ has $2p_Q(\beta^{(i)})$ loops at vertex $i$, and $-(\beta^{(i)},\beta^{(j)})_Q$ arrows from $i$ to $j$ if $i\ne j$.
\end{defn}

\begin{thrm}\label{thrmELS}$\rm{(}$\cite[\S 4]{CB-norm}$\rm{)}$
Let $x\in X(Q,\al)$ be a semisimple representation of type $\tau=(e,\beta)$. There exists  a morphism
$$
f: S \ra X(Q_\tau,e)
$$
from an \'etale slice $S$ for $X(Q,\al)$ at $x$, sending $x$ to $0$, such that $f$ is equivariant via the canonical isomorphism $G(\al)_x\simeq G(e)$, and the restriction of $f$ is strongly \'etale from an open $G(\al)_x$-saturated neighborhood of $x$ onto an open $G(e)$-saturated neighborhood of $0$.
\end{thrm}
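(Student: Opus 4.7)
The plan is to apply Luna's étale slice theorem to the affine $G(\al)$-scheme $X(Q,\al)$ at the semisimple point $x$ and then to identify the resulting slice with a neighborhood of $0$ in $X(Q_\tau,e)$ by matching moment maps via an Ext-theoretic identification of the symplectic normal slice.

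First, because $x$ is semisimple, its $G(\al)$-orbit in $\Rep(\ol Q,\al)$ is closed and the stabilizer $G(\al)_x$ is reductive; the decomposition $x = \bigoplus_i x_i^{\oplus e_i}$ together with Schur's lemma gives $\homo_{\ol Q}(x,x) = \bigoplus_i \Mat(e_i,k)$ and hence the canonical isomorphism $G(\al)_x \simeq G(e)$. Luna's theorem then produces a $G(\al)_x$-stable locally closed affine subvariety $S$ of $X(Q,\al)$ through $x$ such that
\[
\psi : G(\al)\times_{G(\al)_x} S \;\lra\; X(Q,\al)
\]
is strongly étale onto a $G(\al)$-saturated affine open neighborhood of the orbit through $x$.

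Next, identify the Zariski tangent space to the symplectic slice at $x$ as a $G(e)$-representation. Standard deformation theory for zero loci of quiver moment maps gives
\[
T_x S \;\simeq\; \Ext^1_{\Pi(Q)}(x,x),
\]
where $\Pi(Q) = k\ol Q/(\sum_{a\in Q}[a,a^*])$ is the preprojective algebra of $Q$. Using the decomposition into simples and Crawley-Boevey's Ext formula for preprojective algebras,
\[
\dim \Ext^1_{\Pi(Q)}(x_i,x_j) = \dim \homo(x_i,x_j) + \dim \homo(x_j,x_i) - (\beta^{(i)},\beta^{(j)})_Q,
\]
together with Schur's lemma, the $(i,i)$-blocks have dimension $2\,\dim\homo(x_i,x_i) - (\beta^{(i)},\beta^{(i)})_Q = 2p_Q(\beta^{(i)})$ and the $(i,j)$-blocks with $i\neq j$ have dimension $-(\beta^{(i)},\beta^{(j)})_Q$. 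These exactly match the loop counts and arrow counts in the definition of $\ol{Q_\tau}$, giving a $G(e)$-equivariant linear isomorphism
\[
\phi : T_x S \;\simeq\; \Rep(\ol{Q_\tau},e).
\]

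Finally, upgrade $\phi$ to a morphism $f : S \to X(Q_\tau,e)$. The spaces $\Rep(\ol Q,\al)$ and $\Rep(\ol{Q_\tau},e)$ carry canonical symplectic structures as cotangent bundles of half-quiver representation spaces, with respective $G(\al)$- and $G(e)$-moment maps $\mu$ and $\mu_\tau$. The pairing between $\Ext^1_{\Pi(Q)}(x_i,x_j)$ and $\Ext^1_{\Pi(Q)}(x_j,x_i)$ provided by the 2-Calabi-Yau structure of $\Pi(Q)$ matches the canonical symplectic pairing between opposite-arrow blocks of $\ol{Q_\tau}$, so the quadratic part of $\mu$ transported through $\phi$ agrees with $\mu_\tau$. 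A $G(e)$-equivariant Darboux-Weinstein style normal form for moment maps along closed orbits, worked out explicitly in \cite[\S 4]{CB-norm}, extends $\phi$ to a morphism $f : (S,x) \to (X(Q_\tau,e),0)$ whose restriction to a $G(e)$-saturated neighborhood of $x$ is strongly étale onto a $G(e)$-saturated neighborhood of $0$; Proposition \ref{propDRE} and Lemma \ref{propSE} then furnish the stabilizer-matching and descent properties. The main obstacle is precisely this moment map matching: one must preserve $\mu$ and $\mu_\tau$, not merely identify tangent spaces linearly, and this is exactly what the symplectic, equivalently 2-Calabi-Yau, structure of the preprojective algebra $\Pi(Q)$ provides, converting Luna's algebraic slice theorem into the quiver moment map slice theorem of Crawley-Boevey.
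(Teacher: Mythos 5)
Your overall architecture (Luna slice at the closed orbit of $x$, identification of the normal directions with $\Ext^1_{\Pi}(x,x)\simeq\Rep(\ol{Q_\tau},e)$ via Crawley--Boevey's $\Ext$ formula, and matching of the quadratic moment maps) is exactly the route of \cite[\S 4]{CB-norm}, which is also what the paper's proof leans on. The gap is at the one place where the theorem goes beyond what \cite{CB-norm} actually records. Crawley--Boevey's Lemmas 4.4 and 4.8 only state that the induced maps on affine quotients are \'etale; they do not state that $\psi$, nor the restriction of $f$, is \emph{strongly} \'etale, and your final paragraph attributes the strongly \'etale conclusion to a normal form ``worked out explicitly in [CB-norm, \S 4]'' that is not there in that form. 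The paper's proof consists precisely of supplying this: one observes that the lemma from \cite[p95]{Lu}, applied to $\phi:U\ra V$ in the proof of \cite[Lemma 4.4]{CB-norm}, gives not only that $\phi'/G$ is \'etale but that $\phi'$ itself is \'etale and $U'\ra V'\times_{V'\sslash G}U'\sslash G$ is an isomorphism; that \cite[Lemme, p87]{Lu} lets one write the saturated open $U'$ as $G\times_{G_x}S'$, so that the translate of $S'$ is an honest \'etale slice; and that applying the Fundamental Lemma of \cite[p94]{Lu} (instead of \cite[Lemme 3, p93]{Lu}) to $\nu^{-1}(0)\ra W$ in the proof of \cite[Lemma 4.8]{CB-norm} yields that $f$ is strongly \'etale near $x$. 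Without this, the strongly \'etale property of $f$ --- the very property that Lemma \ref{propSE} and Proposition \ref{propDRE} are later applied to --- is asserted rather than proved; note those two statements are consequences of strong \'etaleness and cannot be used to establish it.

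A smaller point: the passage from the linear isomorphism $\phi:T_xS\simeq\Rep(\ol{Q_\tau},e)$ to a morphism of varieties $f:S\ra X(Q_\tau,e)$ is not a formal upgrade of tangent data. It depends on taking $S$ to be the translate $x+(\mu_x^{-1}(0)\cap\nu^{-1}(0))$ of a specific linear complement, so that the (purely quadratic) moment map of $\ol{Q}$ restricts, under the chosen linear identification, to the moment map of $(Q_\tau,e)$ on the nose; this is the content of \cite[Lemmas 3.3 and 4.8]{CB-norm}. Invoking a symplectic Darboux--Weinstein normal form is morally right but does not by itself produce the algebraic morphism $f$, its $G(e)$-equivariance, or the \'etaleness of its restriction.
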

\begin{proof} 
In the proof of \cite[Lemma 4.4]{CB-norm}, and with notation from there, the lemma from \cite[p95]{Lu} is applied to $\phi:U\ra V$. The conclusion from this lemma is stronger than as stated by Crawley-Boevey: namely not only $\phi'/G:U'\sslash G\ra V'\sslash G$ \'etale, but also $\phi':U'\ra V'$ is an \'etale, and $U'\ra V'\times_{V'\sslash G}U'\sslash G$ is an isomorphism. Since $U'$ is  an open $G$-saturated subset of $G\times_{G_x}C'$ (where $C'$ denotes the set $\mu_x^{-1}(0)\cap \nu^{-1}(0)$), by \cite[Lemme, p87]{Lu} one has that $U'=G\times_{G_x}S'$ for some open subset $S'$ of $C'$. Define $S$ to be the image of $S'$ under the translation map $C'\ra\mu^{-1}(0)$ sending $c$ to $x+c$. The remaining properties making $S$ an \'etale slice are already stated in \cite[Lemma 4.4]{CB-norm}.

The morphism $f$  (in our notation) is constructed in \cite[Lemmas 4.8 and 3.3]{CB-norm}. Note that by applying the Fundamental Lemma of \cite[p94]{Lu} to the morphism $\nu^{-1}(0)\ra W$ in the proof of \cite[Lemma 4.8]{CB-norm} (instead of \cite[Lemme 3, p93]{Lu}), one obtains the stronger conclusion that $f$ is  strongly \'etale locally $x$.
\end{proof}

A direct implication of this theorem is:

\begin{cor}\label{propSim0} Let $Q$ be a quiver with dimension vector $\al$. Let $x\in X(Q,\al)$ be a semisimple representation  of type $\tau=(e,\beta)$.  The morphism $f$ from Theorem \ref{thrmELS} induces locally in the \'etale topology  isomorphisms 
$$
(X(Q,\al),x)\simeq (G(\al) x, x)\times (X(Q_\tau,e), 0)
$$
and
$$
(M(Q,\al), q(x)) \simeq (M(Q_\tau,e),q_\tau(0)),
$$
where $q_\tau:X(Q_\tau,e)\ra M(Q_\tau,e)$ is the quotient morphism.
\end{cor}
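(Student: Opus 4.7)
The plan is to chain the two strongly étale morphisms supplied by Theorem \ref{thrmELS} and peel off the orbit direction using the principal-bundle structure on $G(\al)\to G(\al)/G(\al)_x$.

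First, by definition of an étale slice, the map $\psi\colon G(\al)\times_{G(\al)_x}S\to X(Q,\al)$ is strongly étale onto an open $G(\al)$-saturated neighborhood $U$ of $x$, hence étale and surjective by Proposition \ref{propDRE}. The projection $G(\al)\times_{G(\al)_x}S\to G(\al)/G(\al)_x$ is an étale-locally trivial fibration with fiber $S$, so near the class of $(1_{G(\al)},x)$ its source is étale-locally isomorphic to a product $T\times S$ for some étale neighborhood $T$ of the origin in $G(\al)/G(\al)_x$. Via the $G(\al)$-equivariant isomorphism $G(\al)/G(\al)_x\simeq G(\al)x$ sending the origin to $x$, the factor $T$ becomes an étale neighborhood of $x$ in $G(\al)x$. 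Chasing through $\psi$, this yields an étale correspondence between neighborhoods of $x\in X(Q,\al)$ and of $(x,x)\in G(\al)x\times S$.

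Second, Theorem \ref{thrmELS} also gives the strongly étale map $f\colon S\to X(Q_\tau,e)$ defined on an open $G(\al)_x$-saturated neighborhood of $x$ and sending $x$ to $0$. By Proposition \ref{propDRE}, $f$ is étale, so $(S,x)\simeq(X(Q_\tau,e),0)$ in the étale topology. Multiplying this étale-local isomorphism with the factor $(G(\al)x,x)$ obtained in the previous step yields the first claimed isomorphism. For the affine quotients, I would pass to GIT: using the standard identity $(G(\al)\times_{G(\al)_x}S)\sslash G(\al)=S\sslash G(\al)_x$ and that both $\psi/G(\al)$ and $f/G(e)$ are étale by the strong-étale hypothesis, one obtains two étale maps out of $S\sslash G(\al)_x$ near the image of $x$, with images neighborhoods of $q(x)\in M(Q,\al)$ and $q_\tau(0)\in M(Q_\tau,e)$ respectively. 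The canonical identification $G(\al)_x\simeq G(e)$ from the slice theorem makes these two étale maps compatible, and composing them yields the second étale-local isomorphism.

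The only step that goes beyond directly quoting Theorem \ref{thrmELS} and Proposition \ref{propDRE} is the étale-local triviality of the principal $G(\al)_x$-bundle $G(\al)\to G(\al)/G(\al)_x$, which is what allows us to split off the orbit factor from the slice factor in the source of $\psi$. This is standard for reductive group actions, so I do not expect a serious obstacle; the rest of the argument is bookkeeping to verify that basepoints match and that the identifications $G(\al)_x\simeq G(e)$ and $G(\al)/G(\al)_x\simeq G(\al)x$ are compatible with the strongly étale maps $\psi$ and $f$.
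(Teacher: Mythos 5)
Your proposal is correct and follows essentially the same route as the paper, which records the \'etale-local splitting $(X,x)\simeq(G(\al)x,x)\times(S,x)$ immediately after Definition \ref{defEt} (using exactly the \'etale-local triviality of $G\times_{G(\al)_x}S\to G(\al)/G(\al)_x$ you invoke) and then states the corollary as a direct implication of Theorem \ref{thrmELS}. Your passage to the quotients via $(G\times_H S)\sslash G=S\sslash H$ and the \'etaleness of $\psi/G$ and $f/G(e)$ is precisely the intended argument.
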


Next, we show that the process of defining new quivers with dimension vectors via Definition \ref{defCB} does not introduce anything new after repeating it in the following sense:

\begin{prop}\label{prop1T1} For a quiver $Q$ with dimension vector $\al$, let $\tau=(e,\beta)$ be a  representation type occurring in $X(Q,\al)$. 

(i) There is a one-to-one correspondence compatible with the partial order from Theorem \ref{thrmSTST} between the representation types $\tau''\ge \tau$ occurring in $X(Q,\al)$ and the representation types $\tau'$ occurring in $X(Q_\tau,e)$.

(ii) Moreover, if $\tau''=(e'',\beta'')$ corresponds to $\tau'=(e',\beta')$, then
$$
X((Q_\tau)_{\tau'}, e') = X(Q_{\tau''}, e'').
$$
\end{prop}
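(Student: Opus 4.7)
The plan for part (i) is to transport the representation-type stratification across the étale local isomorphism supplied by Corollary~\ref{propSim0}. By Theorem~\ref{thrmSTST}, the stratification of $M(Q,\al)$ by representation type coincides with that by conjugacy class of stabilizer, and Lemma~\ref{propSE} ensures that strongly étale morphisms preserve pointwise stabilizers. Hence the germ isomorphism $(M(Q,\al),q(x))\simeq(M(Q_\tau,e),q_\tau(0))$ induces a bijection between strata of $M(Q,\al)$ whose closures contain $q(x)$ and strata of $M(Q_\tau,e)$ whose closures contain $0$. By Theorem~\ref{thrmSTST}(ii) the former are exactly those indexed by $\tau''\ge\tau$. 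On the other side, $0\in X(Q_\tau,e)$ has stabilizer all of $G(e)$, so its representation type is minimal in the partial order, forcing every stratum of $M(Q_\tau,e)$ to contain $0$ in its closure. Compatibility with the partial order is automatic since on both sides the order is defined by containment of stabilizer conjugacy classes.

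For part (ii), I would first make the correspondence $\tau''\leftrightarrow\tau'$ explicit. Unwinding Crawley-Boevey's slice construction (Theorem~\ref{thrmELS}, together with the $\Ext^1_{kQ}$ description of the arrows of $\ol{Q_\tau}$), a semisimple $y\in X(Q,\al)$ of type $\tau''=(e'',\beta'')$ near $x$ corresponds to a semisimple $y'\in X(Q_\tau,e)$ of type $\tau'=(e',\beta')$ near $0$ with $e''=e'$ and
\[
{\beta''}^{(j)} \;=\; \sum_{i=1}^r {\beta'}^{(j)}_i\,\beta^{(i)}.
\]
The intuition is that the coordinates of ${\beta'}^{(j)}\in\bN^r$ record the multiplicities of the simples $x_1,\ldots,x_r$ of $x$ appearing in a composition series of the simple component $y_j$ of $y$. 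This formula is also forced by the dimension identities $\sum_i e_i\beta^{(i)}=\al=\sum_j e''_j{\beta''}^{(j)}$ and $\sum_j e'_j{\beta'}^{(j)}=e$ together with the categorical equivalence underlying the slice.

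With the formula in hand, the scheme equality of (ii) reduces to a bilinear-algebra computation. Unpacking Definition~\ref{defCB} applied to $Q_\tau$ and using the defining formulas for $\langle\cdot,\cdot\rangle_Q$ and $p_Q$, one checks directly that $(\delta_i,\delta_j)_{Q_\tau}=(\beta^{(i)},\beta^{(j)})_Q$ for all $1\le i,j\le r$, where $\delta_i\in\bN^r$ is the $i$-th standard basis vector. Bilinearity of the Euler form and linearity of the map $\beta'\mapsto\sum_i\beta'_i\beta^{(i)}$ then upgrade this to
\[
({\beta'}^{(j)},{\beta'}^{(k)})_{Q_\tau}=({\beta''}^{(j)},{\beta''}^{(k)})_Q \qquad \text{and}\qquad p_{Q_\tau}({\beta'}^{(j)})=p_Q({\beta''}^{(j)})
\]
for all $j,k$. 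Since these are precisely the combinatorial inputs to Definition~\ref{defCB}, the quivers $(Q_\tau)_{\tau'}$ and $Q_{\tau''}$ have doubles with identical numbers of loops and arrows at every vertex; combined with $e'=e''$, this yields equal ambient representation spaces and identical moment-map equations, whence the scheme equality in (ii). The main obstacle I expect is the first step of (ii): pinning down the formula ${\beta''}^{(j)}=\sum_i{\beta'}^{(j)}_i\beta^{(i)}$ requires looking inside the slice construction of \cite{CB-norm} rather than using Corollary~\ref{propSim0} as a black box, since that corollary records only the ambient local isomorphism. Once this identification is made, the remainder is routine.
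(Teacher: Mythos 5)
Your proposal is correct and for part (i) follows essentially the same route as the paper: both reduce the statement to conjugacy classes of stabilizers via Theorem \ref{thrmSTST}, use the fact that strongly \'etale maps preserve stabilizers (Lemma \ref{propSE}), and observe that every stratum of $M(Q_\tau,e)$ has $q_\tau(0)$ in its closure because $0$ has the full group as stabilizer (the paper packages the latter two observations as a ``localization principle'' and routes the stabilizer comparison through $G\times_H S$ using Dr\'ezet's Propositions 4.9 and 5.5, details you compress but do not get wrong). The one place where your route genuinely differs is the step you yourself flag as the main obstacle in (ii): identifying $e''=e'$ and ${\beta''}^{(j)}=\sum_i{\beta'}^{(j)}_i\beta^{(i)}$. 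You propose to extract this by unwinding the slice construction of \cite{CB-norm}; the paper instead avoids opening that black box by running the argument in reverse: it \emph{defines} $\nu=(e'_j,\sum_i({\beta'})^{(j)}_i\beta^{(i)})_j$, checks via the combinatorial description of the partial order in \cite[Theorem 2]{LP} that $\nu$ is a type $\ge\tau$ occurring in $X(Q,\al)$, and then matches the nested embeddings $G(e')\subset G(e)\subset G(\al)$ against the bijection of part (i) to conclude $\nu=\tau''$. This is cleaner and rests only on results already quoted, so if you write this up I would recommend adopting it rather than re-deriving the slice. Your closing bilinearity computation --- $(\delta_i,\delta_j)_{Q_\tau}=(\beta^{(i)},\beta^{(j)})_Q$, hence $p_{Q_\tau}({\beta'}^{(j)})=p_Q({\beta''}^{(j)})$ and $({\beta'}^{(j)},{\beta'}^{(k)})_{Q_\tau}=({\beta''}^{(j)},{\beta''}^{(k)})_Q$ --- is exactly the verification the paper declares ``straight-forward to check,'' and it is a virtue of your write-up that you make it explicit.
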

\begin{proof} Let $X=X(Q,\al)$, $M=M(Q,\al)$, and $G=G(\al)$. Let $x$ be a semisimple point in $X$ of representation type $\tau$, and let $U$ be an open saturated affine neighborhood of $x$. We can then localize the description of
$$
\sS:=\{\tau''\text{ occurs in }X\mid \tau''\ge \tau\}, 
$$
that is, we claim that
$$
\sS =\{\tau''\text{ occurs in }U\mid \tau''\ge \tau\}.
$$
Indeed, the inclusion $\supset$ is obvious, and for the other inclusion suppose that $\tau''\ge \tau$ but $\tau''$ does not occur in $U$. Since $U$ is saturated, $q(U)$ is open in $M$ and intersects $M_\tau$ non-trivially, with $M_\tau$ as in Theorem \ref{thrmSTST}. By assumption, the closure $\ol{M_{\tau''}}$ is contained in the closed subset $M\setminus q(U)$ of $M$. However $q(U)\cap M_\tau\subset M_\tau\subset \ol{M_{\tau''}}$ by Theorem \ref{thrmSTST}. This is a contradiction. This localization principle will be used a few times in this paper.

By Theorem \ref{thrmSTST}, there is a one-to-one correspondence between  the representation types $\tau''\ge \tau$ occurring in $U$ and the conjugacy classes of stabilizers of semisimple points in $U$ conjugate to a subgroup of $H:=G(\al)_x$.

  Let $S$ be an \'etale slice at $x$ as in Theorem \ref{thrmELS}. We take from now on $U$ to be the open affine $G$-saturated neighborhood of $x$ in $X$ obtained as the image of 
 $$\psi: G\times_{H}S\ra X$$
which is strongly \'etale onto $U$, with notation as in  Definition \ref{defEt}. 

By \cite[Proposition 5.5]{Dr}, for every semisimple $x''$ in $U$, the stabilizer $H''$ of $x''$ is the conjugate in $G$ of a subgroup of $H$. Together with the previous remark, we have so far that $\sS$ is into one-to-one correspondence with the $G$-conjugacy classes of stabilizers of semisimple points of $U$.

Since $(G\times_{H}S)\sslash G=S\sslash H$, the $G$-conjugacy classes of stabilizers of semisimple points of $U$ are into one-to-one correspondence with the $H$-conjugacy classes of stabilizers at the points of $S$ with closed $H$-orbit, by \cite[Proposition 4.9 -3]{Dr}.

Let $W$ be an open $H$-saturated affine neighborhood of $x$ in $S$. Since $W\sslash H\subset S\sslash H$ is open, it is \'etale over its image under $\psi/G$ in $M$ which is also open in $M$. This allows us to transfer the localization principle above from $M$ to $S$, that is, to conclude that the $H$-conjugacy classes of stabilizers at the points of $S$ with closed $H$-orbit are the same as the conjugacy classes of stabilizers at the points of $W$ with closed $H$-orbit.

Consider now the morphism $f:S\ra X(Q_\tau,e)$ from Theorem \ref{thrmELS}. Let $W\subset S$ and $V\subset X(Q_\tau,e)$ be open, saturated for the action of $H\simeq G(e)$,  affine neighborhoods of $x$ and $0$, respectively, for which the restriction $f_{|W}:W\ra V$ is strongly \'etale. By Lemma \ref{propSE}, there is a one-to-one correspondence between the conjugacy classes of $H$-stabilizers at the points of $W$ with closed $H$-orbit and the  conjugacy classes of $G(e)$-stabilizers at the points of $V$ with closed $G(e)$-orbit, i.e. semisimple points of $X(Q_\tau,e)$ lying in $V$. By the localization principle from above applied to $X(Q_\tau,e)$, this is further into one-to-one correspondence with the conjugacy classes of the stabilizers of the semisimple points of $X(Q_\tau,e)$. Finally, applying Theorem \ref{thrmSTST}, this proves (i).

For part (ii), write $$\tau=(e,\beta)=(e_i,\beta^{(i)})_{1\le i\le r}$$ and $$\tau'=(e',\beta')=(e'_j,(\beta')^{(j)})_{1\le j\le r'}$$ with $\beta^{(i)}\in\bN^I$ and $(\beta')^{(j)}=((\beta')_1^{(j)},\ldots, (\beta')^{(j)}_r)   \in \bN^r$. Then 
$$
\nu=(e'_j,\sum_{i=1}^r(\beta')^{(j)}_i\cdot \beta^{(i)})_{1\le j\le r'}
$$
is a semisimple representation type $\ge \tau$ in $\Rep(\ol{Q},\al)$, by combinatorial description of the partial order $\ge$ on representation types from \cite[Theorem 2]{LP}. Moreover, $G(e)$ is conjugate to a subgroup of the $G$-stabilizer of $x$ in $X$, and $G(e')$ is conjugate to a subgroup of the $G(e)$-stabilizer of $0$ in $X(Q_\tau,e)$, using the embeddings $G(e)\subset G$ and $G(e')\subset G(e)$ from {\it loc. cit}. Hence, by the correspondence from  part (i), $\nu$ corresponds to $\tau$ and so it occurs also in $X$. Therefore $\nu=\tau''$. With the explicit description of $\tau''$ in hand, it is now straight-forward to check that $
X((Q_\tau)_{\tau'}, e') = X(Q_{\tau''}, e'')
$ directly from Definition \ref{defCB}.
\end{proof}

\begin{prop}\label{propSim} Let $Q$ be a quiver with a dimension vector $\al$. Let $\tau=(e,\beta)$ be a representation type occurring $X(Q,\al)$. 

(i) If $X(Q,\al)$ contains a simple representation, then $X(Q_\tau,e)$ also contains a simple representation. 

(ii) Moreover, in this case
$$p_{Q_\tau}(e)=p_Q(\al).$$
\end{prop}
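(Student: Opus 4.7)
Part (ii) is a pure combinatorial identity that I would establish by computing $\dim\Rep(\ol{Q_\tau},e)$ in two ways. Using the data of $Q_\tau$ from Definition \ref{defCB}, the contribution of loops at vertex $i$ of $\ol{Q_\tau}$ is $2p_Q(\beta^{(i)})\,e_i^2$, while that of arrows between distinct vertices is $-(\beta^{(i)},\beta^{(j)})_Q\,e_ie_j$ summed over ordered pairs $i\ne j$. Using $2p_Q(\beta^{(i)})=2-(\beta^{(i)},\beta^{(i)})_Q$, the bilinearity of $(\cdot,\cdot)_Q$, and the identity $\al=\sum_i e_i\beta^{(i)}$, the total collapses to $2(e\cdot e)-(\al,\al)_Q=2(e\cdot e-1+p_Q(\al))$. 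Comparing with the general formula $\dim\Rep(\ol{Q_\tau},e)=2(e\cdot e-1+p_{Q_\tau}(e))$ recalled in Section \ref{s2} immediately yields $p_{Q_\tau}(e)=p_Q(\al)$. In fact this identity holds whenever $\tau=(e,\beta)$ is a representation type occurring in $X(Q,\al)$, independently of the simple hypothesis used in (i).

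For part (i), my plan is to apply the one-to-one correspondence of Proposition \ref{prop1T1}(i) to the ``simple'' representation type $(1,\al)$ in $X(Q,\al)$, which occurs by hypothesis. Any simple representation of $\ol{Q}$ of dimension $\al$ has trivial stabilizer in $G(\al)=\prod_i\GL_{\al_i}(k)/k^*$ (Schur's lemma, once the diagonal scalars have been quotiented out), and the trivial subgroup is contained up to conjugacy in every stabilizer; hence $(1,\al)$ is the unique maximal element of the partial order on representation types, and in particular $(1,\al)\ge\tau$. Proposition \ref{prop1T1}(i) then produces a representation type $\tau'$ occurring in $X(Q_\tau,e)$.

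To conclude that $\tau'=(1,e)$, i.e.\ that $X(Q_\tau,e)$ itself contains a simple representation of dimension vector $e$, I would track how the chain of identifications built in the proof of Proposition \ref{prop1T1}(i) transports stabilizers. The passage from $U\subset X(Q,\al)$ to an \'etale slice $S$ identifies $G$-conjugacy classes of stabilizers at semisimple points of $U$ with $H$-conjugacy classes of stabilizers of $H$-closed orbits in $S$ (via $G\times_H S$, cf.\ Drezet's Proposition 4.9(3)); the strongly \'etale morphism $f_{|W}\colon W\to V\subset X(Q_\tau,e)$ preserves stabilizers on the nose by Lemma \ref{propSE}; and localising inside $X(Q_\tau,e)$ preserves conjugacy classes of stabilizers of semisimple points. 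The trivial subgroup is invariant under each of these identifications, so the trivial stabilizer of the simple in $X(Q,\al)$ produces a trivial $G(e)$-stabilizer at a semisimple point of $X(Q_\tau,e)$, which is exactly a simple representation of dimension vector $e$.

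The main obstacle I expect is precisely this bookkeeping in (i): checking link by link that each step in the chain of identifications assembled inside the proof of Proposition \ref{prop1T1}(i) sends the trivial subgroup to the trivial subgroup. Once this has been verified, (i) follows, and (ii) is the independent combinatorial computation described above.
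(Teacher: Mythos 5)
Your proposal is correct, and the two parts deserve separate comments. For part (i) you take essentially the same route as the paper: the paper's proof is the one-line observation that simple representations are exactly the semisimple points with trivial stabilizer, so that via Theorem \ref{thrmSTST} part (i) is the special case of the correspondence of Proposition \ref{prop1T1}(i) applied to the maximal type $(1,\al)$; your link-by-link tracking of the trivial subgroup through $G\times_H S$, the strongly \'etale map $f_{|W}$ (Lemma \ref{propSE}), and the localizations is precisely the bookkeeping that this one-liner silently delegates to the proof of Proposition \ref{prop1T1}, so there is no gap there.

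For part (ii) your argument is genuinely different from the paper's and is worth noting. The paper deduces $p_{Q_\tau}(e)=p_Q(\al)$ from the \'etale-local isomorphism $(M(Q,\al),q(x))\simeq (M(Q_\tau,e),q_\tau(0))$ of Corollary \ref{propSim0} together with Crawley-Boevey's dimension formula $\dim M=2p_Q(\al)$ from Theorem \ref{thrmCBd}; this is why the paper needs the hypothesis that $X(Q,\al)$ contains a simple representation even for (ii). Your computation, by contrast, is a direct combinatorial identity: writing $2p_Q(\beta^{(i)})=2-(\beta^{(i)},\beta^{(i)})_Q$, using bilinearity of $(\cdot,\cdot)_Q$ and $\al=\sum_ie_i\beta^{(i)}$, one gets $\dim\Rep(\ol{Q_\tau},e)=2(e\cdot e)-(\al,\al)_Q=2(e\cdot e-1+p_Q(\al))$, and comparison with the general formula $\dim\Rep(\ol{Q_\tau},e)=2(e\cdot e-1+p_{Q_\tau}(e))$ gives the claim; I have checked the algebra and it is correct. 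What this buys you is generality and independence: the identity holds for any representation type $\tau$ occurring in $X(Q,\al)$ (indeed for any decomposition $\al=\sum_ie_i\beta^{(i)}$ for which $Q_\tau$ is defined), with no simplicity hypothesis and no appeal to the geometry of the moduli spaces. The only small point to keep in mind is that the dimension count uses the description of $\ol{Q_\tau}$ from Definition \ref{defCB} (loops counted in the double, arrows between distinct vertices counted over ordered pairs using the symmetry of $(\cdot,\cdot)_Q$), which you have done correctly.
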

\begin{proof} Since simple representations correspond to semisimple points with trivial stabilizers, Theorem \ref{thrmSTST} implies that part (i) is a particular case of Proposition \ref{prop1T1}.

In presence of simple representations, by Theorem \ref{thrmCBd} we have then
$$
\dim M(Q,\al) = 2p_Q(\al)\quad\text{ and }\quad\dim M(Q_\tau,e)= 2p_{Q_\tau}(e).
$$
and both are irreducible varieties. Part (ii) then follows from the \'etale-local isomorphism
$$
(M(Q,\al),q(x))\simeq (M(Q_\tau,e),q_\tau(0))
$$ 
from Corollary \ref{propSim0}. 
\end{proof}

\begin{defn}\label{defFZ} Let $Q$ be a quiver with a dimension vector $\al$. We denote the locus in $X(Q,\al)$ fixed by the action of $G(\al)$ by
$$
F(Q,\al) := X(Q,\al)^{G(\al)}.
$$
\end{defn}

\begin{lemma}\label{lemFix} Let $Q$ be a quiver with a dimension vector $\al $. Then:

(i) $F(Q,\al)$ is the set of $x\in \Rep(\ol{Q},\al)$ such that $x_a$ and $x_{a^*}$ are scalar matrices when $a\in Q$ is a loop at a vertex, and $x_a$ and $x_{a^*}$ are 0 otherwise.

(ii) $F(Q,\al)$ is the set of semisimple points $x\in X(Q,\al)$ with representation type $(\al_i,\eps_i)_{i\in I}$, where $\eps_i$ are the standard coordinate vectors on $\bZ^I$.
\end{lemma}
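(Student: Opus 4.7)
For part (i), the plan is to directly analyze how $G(\al) = (\prod_i \GL_{\al_i}(k))/k^*$ acts on matrix components. For an arrow $a$ of $\ol{Q}$ with $t(a)=i$ and $h(a)=j$, the action sends $x_a$ to $g_j x_a g_i^{-1}$; the diagonal $k^*$-subgroup acts trivially on every component of $\Rep(\ol{Q},\al)$, so the action is well-defined on any representative $(g_k) \in \prod_i \GL_{\al_i}(k)$. When $i \ne j$, taking $g_j = \lambda I_{\al_j}$ with $\lambda \ne 1$ and $g_k = I_{\al_k}$ for $k \ne j$ scales $x_a$ by $\lambda$, so fixedness forces $x_a = 0$. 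When $i = j$ (a loop), fixedness requires $x_a$ to commute with every element of $\GL_{\al_i}(k)$, so $x_a$ is scalar by Schur's lemma. The identical argument applies to $x_{a^*}$.

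For part (ii), I would first verify $F(Q,\al) \subset X(Q,\al)$: by (i), every summand $[x_a, x_{a^*}]$ in $\mu(x) = \sum_{a\in Q}[x_a,x_{a^*}]$ vanishes, since loop contributions involve two scalar matrices and non-loop contributions involve the zero matrix.

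To identify $F(Q,\al)$ with the semisimple points of representation type $(\al_i,\eps_i)_{i\in I}$, I would argue both inclusions. For the forward direction, part (i) exhibits each $x \in F(Q,\al)$ as a direct sum, over $i \in I$, of $\al_i$ copies of the one-dimensional representation at vertex $i$ whose loop actions (in $\ol{Q}$) are the scalars recorded by $x$; this is visibly semisimple of the asserted type. For the reverse direction, given such a semisimple $x$, its underlying $k\ol{Q}$-module is isomorphic to $\bigoplus_i T_i^{\oplus \al_i}$ for one-dimensional simples $T_i$ concentrated at vertex $i$; a change of basis carries $x$ to an element $y \in \Rep(\ol{Q},\al)$ in the canonical form described by (i), hence $y \in F(Q,\al)$. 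Since $y$ is $G(\al)$-fixed its $G(\al)$-orbit is $\{y\}$, but $x$ also lies in this orbit, forcing $x = y \in F(Q,\al)$.

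I expect no real obstacle; the only delicate point is the $k^*$-quotient in the definition of $G(\al)$, which I circumvent by noting that $k^*$ acts trivially on $\Rep(\ol{Q},\al)$, so any chosen element of $\prod_i\GL_{\al_i}(k)$ provides a legitimate probe of the fixed-point condition.
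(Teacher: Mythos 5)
Your argument is correct. For part (i) you do essentially what the paper does: the fixed-point condition on a non-loop arrow forces the matrix to equal all its images under left--right multiplication by invertibles (hence zero, via your scalar probe), and on a loop it forces the matrix to commute with all of $\GL_{\al_i}(k)$ (hence scalar); your explicit remark that the diagonal $k^*$ acts trivially, so one may test against arbitrary elements of $\prod_i\GL_{\al_i}(k)$, is a welcome precision that the paper leaves implicit. For part (ii) you take a genuinely different, more elementary route. The paper simply notes that $(\al_i,\eps_i)_{i\in I}$ is the representation type of the zero representation, whose stabilizer is all of $G(\al)$, and invokes the correspondence of Theorem \ref{thrmSTST} between representation types and conjugacy classes of stabilizers of semisimple points: the semisimple points of that type are exactly those with full stabilizer, i.e.\ the fixed points. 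You instead decompose a fixed point by hand into one-dimensional simples concentrated at single vertices (isomorphic within a vertex, non-isomorphic across vertices, so the type is as claimed), and conversely bring any semisimple point of that type into the canonical form of (i) by a change of basis, then use that a $G(\al)$-fixed point has singleton orbit to conclude the original point already was in that form. Your version is self-contained and avoids the Le Bruyn--Procesi machinery behind Theorem \ref{thrmSTST}; the paper's version is shorter given that machinery and exhibits the lemma as the $\tau=(\al_i,\eps_i)_{i\in I}$ instance of the general stratification, which is the form in which it is reused later (e.g.\ in Definition \ref{defnStr} and Proposition \ref{propStrat}).
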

\begin{proof}
(i) If $x$ is fixed by all $g\in G(\al)$, then $x_a$ is a square matrix equal to all its conjugates if $a$ is a loop, and it is a matrix equal to all the matrices similar to it, if $a$ is not a loop. The same for $a^*$. Clearly this locus is in the zero locus of the moment map.

(ii) Since $(\al_i,\eps_i)_{i\in I}$ is the representation type of the zero representation in $X(Q,\al)$, this is a particular case of Theorem \ref{thrmSTST}. 
\end{proof}


\begin{defn}\label{defFZa} Let $Q$ be a quiver with a dimension vector $\al$. We denote by $Z(Q,\al)$ the subset of $X(Q,\al)$ consisting of all points with orbit closure containing a fixed point under the action of $G(\al)$. That is,
$$
Z(Q,\al) := q^{-1}(q(F(X,\al)))
$$
where $q:X(Q,\al)\ra M(Q,\al)$ is the affine quotient morphism.
\end{defn}

\begin{lemma}\label{propEXX}
Let $Q$ be any quiver with a dimension vector $\al$. Assume that 
\be\label{eqClass}
\dim q^{-1}(q(0)) < 2(p_Q(\al) - \#(\text{loops in } Q)).
\ee
Then $\dim Z(Q,\al)<2p_Q(\al)$.
\end{lemma}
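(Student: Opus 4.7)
The plan is to bound $\dim Z(Q,\al)$ via the projection $q\colon Z(Q,\al)\to q(F(Q,\al))$: I expect the base to have dimension $2\ell$, where $\ell$ is the number of loops of $Q$, while every fiber is isomorphic to $q^{-1}(q(0))$, whose dimension is controlled by the hypothesis.

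First, by Lemma~\ref{lemFix}(i), $F(Q,\al)$ is an affine space of dimension $2\ell$, with one scalar parameter for the $a$-component and one for the $a^*$-component of each loop $a$, and nothing for non-loops. Since each $x_0\in F(Q,\al)$ is a $G(\al)$-fixed point, its orbit is already closed (a singleton), so it is the unique closed orbit in $q^{-1}(q(x_0))$; hence $q|_{F(Q,\al)}$ is injective and $\dim q(F(Q,\al))=2\ell$.

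The heart of the argument is to produce, for each $x_0\in F(Q,\al)$, a $G(\al)$-equivariant automorphism of $X(Q,\al)$ sending $0$ to $x_0$. Define $T_{x_0}\colon \Rep(\ol Q,\al)\to\Rep(\ol Q,\al)$ by $T_{x_0}(y)=y+x_0$. Expanding,
\[
\mu(y+x_0)=\mu(y)+\mu(x_0)+\sum_{a\in Q}\bigl([y_a,(x_0)_{a^*}]+[(x_0)_a,y_{a^*}]\bigr),
\]
and I expect all cross terms to vanish by Lemma~\ref{lemFix}(i): at non-loops the components of $x_0$ are zero, and at loops they are scalar matrices, which are central. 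Since $\mu(x_0)=0$, this yields $T_{x_0}(X(Q,\al))\subset X(Q,\al)$. The map $T_{x_0}$ is $G(\al)$-equivariant because $x_0$ is $G(\al)$-fixed, so it descends to an automorphism of $M(Q,\al)$ carrying $q(0)$ to $q(x_0)$, and restricts to an isomorphism $q^{-1}(q(0))\simeq q^{-1}(q(x_0))$ of fibers.

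Applying the elementary fiber-dimension bound component-by-component to the surjection $q\colon Z(Q,\al)\to q(F(Q,\al))$ then gives
\[
\dim Z(Q,\al)\le \dim q(F(Q,\al))+\max_{p\in q(F(Q,\al))}\dim q^{-1}(p)=2\ell+\dim q^{-1}(q(0)),
\]
and by the hypothesis \eqref{eqClass} this is strictly less than $2\ell+2(p_Q(\al)-\ell)=2p_Q(\al)$. The only step of real substance is the invariance of $X(Q,\al)$ under $T_{x_0}$, and this reduces immediately to the centrality of scalar matrices in $\Mat(\al_i,k)$; the rest is bookkeeping.
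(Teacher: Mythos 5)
Your proof is correct, but the central step is handled quite differently from the paper. The paper obtains the equality $\dim q^{-1}(q(x))=\dim q^{-1}(q(0))$ for $x\in F(Q,\al)$ by invoking the Luna-slice machinery: all fixed points have the same representation type (Lemma \ref{lemFix}(ii)), so Corollary \ref{propSim0} together with Theorem \ref{thrmELS} and Proposition \ref{propDRE} gives \'etale-local identifications of the fibers over the saturated neighborhoods, and then $\dim Z\le\dim F+\dim q^{-1}(q(0))$ with $\dim F=2\ell$. You instead observe that translation by a fixed point $x_0$ is a $G(\al)$-equivariant automorphism of $X(Q,\al)$ — because the cross terms in $\mu(y+x_0)$ vanish, the loop components of $x_0$ being scalar and the non-loop components zero — and hence induces an honest isomorphism $q^{-1}(q(0))\simeq q^{-1}(q(x_0))$. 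This is more elementary (no slices needed for this lemma) and slightly stronger (a global isomorphism of fibers rather than equality of dimensions deduced from \'etale-local data); the paper's route has the advantage of reusing machinery that is needed elsewhere anyway and of working uniformly for arbitrary strata, not just the fixed locus. Your remaining steps — $\dim q(F)\le\dim F=2\ell$ (injectivity of $q|_F$ is true but not even needed), the component-by-component fiber-dimension bound, and the final arithmetic — are all fine.
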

\begin{proof} Let $X, Z, F$ be $X(Q,\al), Z(Q,\al), F(Q,\al)$, respectively. Since the first \'etale-local isomorphisms of Corollary \ref{propSim0} actually holds above  open saturated affine neighborhoods as provided by Theorem \ref{thrmELS},  we have  for $x\in F$ 
$$
\dim q^{-1}(q(x)) = \dim q^{-1}(q(0))
$$ 
by Proposition \ref{propDRE}. Therefore  $\dim Z\le \dim F + \dim q^{-1}(q(0))$. 
Lemma \ref{lemFix} implies that $\dim F$ is twice the number of loops in $Q$. Together with (\ref{eqClass}) one obtains that $\dim Z< 2p_Q(\al)$.
\end{proof}

\begin{rmk} We will prove below in Proposition \ref{propNil} that (\ref{eqClass}) holds for $(Q_\tau,e)$ with $\tau=(e,\beta)$ arising from semisimple, but not simple, representations in the zero locus of the moment map for the quiver with one vertex and $g\ge 2$ loops. In order to extend Theorem \ref{thrmRSQ} and Theorem \ref{thrmExtend} below on rational singularities of $X(Q,\al)$ to a bigger class of quivers and dimension vectors, it would be helpful to understand for which other $(Q,\al)$ the inequality (\ref{eqClass}) holds.
\end{rmk}

\begin{defn}\label{defnStr} Let $Q$ be a quiver with a dimension vector $\al $. Let  $M_\tau$ be the subset of $M=M(Q,\al)$ consisting of isomorphism classes of semisimple representations of type $\tau$, as in Theorem \ref{thrmSTST}. We define 
$$X_\tau :=q^{-1}(M_\tau),$$
that is, $X_\tau$ is the locally closed subset of $X$ consisting of points $x$ with orbit closure containing a semisimple representation of type $\tau$. For example, $X_\tau=Z(Q,\al)$ if $\tau=(\al_i,\eps_i)_{i\in I}$, by Lemma \ref{lemFix}.
\end{defn}

\begin{prop}\label{propStrat} Let $Q$ be a quiver with a dimension vector $\al $. Let $X=X(Q,\al)$ and let $X_\tau$ be as above, for a representation type $\tau=(e,\beta)$ occurring in $X$. The isomorphism from Corollary \ref{propSim0}  restricts locally in the \'etale topology to an isomorphism
$$
(X_\tau, x)\simeq (G(\al)x,x)\times (Z(Q_\tau,e), 0).
$$
\end{prop}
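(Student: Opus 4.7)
The plan is to verify that the étale-local isomorphism of Corollary \ref{propSim0} carries the stratum $X_\tau$ to $G(\al)x \times Z(Q_\tau,e)$. Recall from Theorem \ref{thrmELS} that this isomorphism is built from two strongly étale morphisms: $\psi : G(\al)\times_{G(\al)_x}S \to X$ onto a $G(\al)$-saturated open $U\ni x$, and the restriction of $f$ to a $G(\al)_x$-saturated open $W\subset S$ landing in a $G(e)$-saturated open $V\ni 0$ in $X(Q_\tau,e)$ (using $G(\al)_x\simeq G(e)$).

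By Proposition \ref{propDRE} and Lemma \ref{propSE}, each of these strongly étale morphisms induces a bijection between closed orbits, preserves stabilizers of closed orbits up to the equivariance isomorphism, and identifies each fiber of the source quotient with the corresponding fiber of the target quotient. In particular, a closed $G(\al)$-orbit in $U$ of representation type $\tau$ has stabilizer conjugate to $G(\al)_x$, and so corresponds under the composition to a closed $G(e)$-orbit in $V$ with stabilizer all of $G(e)$, which by Lemma \ref{lemFix}(ii) is a point of $F(Q_\tau,e)\cap V$. Equivalently, this is the combinatorial statement that under Proposition \ref{prop1T1}(i) the type $\tau$ is sent to the type $(e_i,\epsilon_i)_{1\le i\le r}$ of the origin in $X(Q_\tau,e)$; setting $\tau''=\tau$ in the formula $\nu=(e'_j,\sum_i(\beta')^{(j)}_i\beta^{(i)})_j$ from the proof of Proposition \ref{prop1T1}(ii) forces $\tau'=(e_i,\epsilon_i)_i$. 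Consequently the second isomorphism of Corollary \ref{propSim0} sends $M_\tau\cap q(U)$ to $q_\tau(F(Q_\tau,e))\cap q_\tau(V)$.

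Taking preimages under the respective quotients and using Definition \ref{defFZa}, which says $Z(Q_\tau,e) = q_\tau^{-1}(q_\tau(F(Q_\tau,e)))$, the first isomorphism of Corollary \ref{propSim0} then restricts to the desired étale-local isomorphism
$$(X_\tau,x)\simeq (G(\al)x,x)\times (Z(Q_\tau,e),0).$$
The only nontrivial step is the identification of $\tau$ with the origin-stratum on the slice side, which is a direct consequence of Proposition \ref{prop1T1} and presents no real obstacle beyond bookkeeping.
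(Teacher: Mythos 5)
Your proof is correct and follows essentially the same route as the paper's: identify, via Proposition \ref{prop1T1} (equivalently, preservation of stabilizers under the strongly étale maps), that the type $\tau$ on the $X$ side corresponds to the type $(e_i,\eps_i)_i$ of the origin on the slice side, restrict the quotient-level isomorphism of Corollary \ref{propSim0} to these strata, and pull back through the quotient morphisms using the strongly étale (fiber-product) property together with $Z(Q_\tau,e)=q_\tau^{-1}(q_\tau(F(Q_\tau,e)))$. The extra stabilizer bookkeeping you spell out is exactly what the paper delegates to Proposition \ref{prop1T1}.
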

\begin{proof}
Let $\tau'=(e_i,\eps_i)_{1\le i\le r}$, so that $\tau'$ is the representation type of $0\in X(Q_\tau,e)$ by Lemma \ref{lemFix}. Then by Proposition \ref{prop1T1}, to $\tau'$ corresponds the representation type $\tau''=\tau$  in $X$. Hence the  \'etale-local isomorphism $(M, q(x))\simeq (M(Q_\tau,e), q_\tau(0))$ from Corollary \ref{propSim0} restricts to an \'etale-local isomorphism $(M_\tau, q(x))\simeq (M(Q_\tau,e)_{\tau'}, q_\tau(0))$ with $M=M(Q,\al)$. The conclusion now follows from the first isomorphism from Corollary  \ref{propSim0}, via the strongly \'etale property for $f$ from which this isomorphism was derived, together with the identification $Z(Q_\tau,e)=q_\tau^{-1}(M(Q_\tau,e)_{\tau'})$.
\end{proof}

We recall next a bound on the size of nilpotent cones available for all quivers due to Crawley-Boevey. Afterwards, we show that this bound is particularly useful for the quiver with one vertex and $g\ge 2$ loops. 

\begin{defn}\cite[\S 6]{CB-norm} Let $Q$ be a quiver. Let $T_1,\ldots , T_r$ be a collection of non-isomorphic simple representations of $\ol{Q}$ of dimensions $\beta^{(1)},\ldots, \beta^{(r)}$. Assume that all $T_i$ lie in the zero locus of the moment map (for their corresponding dimension), that is $T_i\in X(Q,\beta^{(i)})$ for all $1\le i\le r$. Let $M$ be in $X(Q,\al)$. We say that $M$ has {\it top-type} $(j_1,m_1;\ldots,j_h,m_h)$ with respect to $T_1,\ldots, T_r$ if:
\begin{itemize}
\item $h\ge 0$, $j_1,\ldots ,j_h$ are integers in $\{1,\ldots, r\}$,  $m_1,\ldots, m_h$ are positive integers, and
\item there is a filtration
$$
0=M_0\subset M_1\subset\ldots\subset M_h=M
$$
by sub-representations such that $M_s/M_{s-1}\simeq T_{j_s}^{\oplus m_s}$ and $\dim \homo (M_s,T_{j_s})=m_s$ for each $s$.
\end{itemize} 

If $(j_1,m_1;\ldots,j_h,m_h)$ is a top-type, and $1\le s\le h$, define $z_s$ to be zero if $p_Q(\beta^{(j_s)})=0$ or there is no $k<s$ with $j_k=j_s$, and otherwise to be equal to $m_k$ for the largest $k<s$ with $j_k=j_s$.
\end{defn}

\begin{rmk}
(i) The definition from \cite{CB-norm} uses  the variety  $\Rep(\Pi,\al)$ of representations of the preprojective algebra $\Pi$ of $Q$. However, the category of $\Pi$-modules is equivalent with the category of representations of the double quiver $\ol{Q}$ with zero image under the moment map by \cite[p.16]{CBH}. In particular $\Rep(\Pi,\al)=X(Q,\al)$ and the group action on both sides is the same.  Moreover, if $M$ is a representation of $\ol{Q}$ lying $X(Q,\al)$ and $M'\subset M$ is a sub-representation, say of dimension $\beta$, then $M'$ must lie in $X(Q,\beta)$. Hence the definition from above agrees with that from \cite{CB-norm}. 

(ii) The following remark is also made at the beginning of \cite[Proof of Theorem 6.3]{CB-norm}; we rephrase it slightly. Let $q:X(Q,\al)\ra M(Q,\al)$ be the affine quotient morphism. Let $x$ be a semisimple representation in $X(Q,\al)$. The points in the fiber $q^{-1}(q(x))$ correspond to representations in $X(Q,\al)$ with semisimplification isomorphic to $x$. Thus every element in $q^{-1}(q(x))$  admits a top-type with respect to the collection of simple factors of $x$. 
\end{rmk}

\begin{prop}\cite[Lemma 6.2]{CB-norm}\label{propCBL6} For a quiver $Q$ and a collection $T_1,\ldots, T_r$ of  non-isomorphic simple representations of $\ol{Q}$ lying in the zero locus of the moment maps for the corresponding dimensions $\beta^{(1)},\ldots,\beta^{(r)}$, 
the subset of $X(Q,\al)$ consisting of those elements with top-type $(j_1,m_1;\ldots ;j_h,m_h)$ with respect to $T_1,\ldots, T_r$ is constructible and has dimension at most
$$
\al\cdot \al -1 +p_Q(\al) +\sum_{s=1}^hm_sz_s -\sum_{s=1}^hm_s^2p_Q(\beta^{(j_s)}).
$$
\end{prop}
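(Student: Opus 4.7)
The plan is to induct on the filtration length $h$. For $0 \le s \le h$, set $\alpha_s = \sum_{k=1}^s m_k \beta^{(j_k)}$ and let $C_s \subset X(Q, \alpha_s)$ denote the constructible subset of representations $N$ that admit a filtration $0 = N_0 \subset N_1 \subset \cdots \subset N_s = N$ with $N_k/N_{k-1} \simeq T_{j_k}^{\oplus m_k}$ and $\dim \homo(N_k, T_{j_k}) = m_k$ for every $k \le s$. The target is the inequality
$$
\dim C_s \le \alpha_s \cdot \alpha_s - 1 + p_Q(\alpha_s) + \sum_{k=1}^s m_k z_k - \sum_{k=1}^s m_k^2 p_Q(\beta^{(j_k)}),
$$
and the statement of the proposition is the case $s=h$. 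Constructibility will be automatic from Chevalley's theorem applied to the forgetful morphisms on the incidence varieties introduced below.

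To pass from $s-1$ to $s$, I parametrize $C_s$ via the incidence variety
$$
\widetilde C_s = \bigl\{(N_{s-1}, N_s, \iota) : N_{s-1} \in C_{s-1},\ \iota \colon N_{s-1} \hookrightarrow N_s,\ N_s/\iota(N_{s-1}) \simeq T_{j_s}^{\oplus m_s},\ \dim \homo(N_s, T_{j_s}) = m_s \bigr\},
$$
with its two natural projections. The projection $\widetilde C_s \to C_s$ is surjective, while the projection $\widetilde C_s \to C_{s-1}$ has fibers that, after a $G(\alpha_s)$-orbit choice identifying $N_s$ with the standard $\alpha_s$-dimensional space, are controlled by extensions of $T_{j_s}^{\oplus m_s}$ by $N_{s-1}$ in the category of modules over the preprojective algebra $\Pi$ of $Q$, subject to the homological constraint. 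Here I use $\Rep(\Pi, \alpha) = X(Q, \alpha)$ as in Remark 2.12(i).

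The essential computation is the Crawley-Boevey--Holland formula for $\Pi$-modules of dimension vectors $\mu, \nu$,
$$
\dim \Ext^1_\Pi(M, N) - \dim \homo_\Pi(M, N) - \dim \homo_\Pi(N, M) = -(\mu, \nu)_Q,
$$
applied with $M = T_{j_s}^{\oplus m_s}$ and $N = N_{s-1}$. Iterating the long exact sequence for $\homo(-, T_{j_s})$ along the filtration of $N_{s-1}$, together with the top-type conditions $\dim \homo(N_k, T_{j_k}) = m_k$ for all $k < s$, pins down $\dim \homo_\Pi(N_{s-1}, T_{j_s}) = z_s$: each connecting map is an isomorphism unless $j_k = j_s$, and only the most recent such index contributes to the socle of morphisms into $T_{j_s}$. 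Expanding $p_Q(\alpha_s)$ at $\alpha_s = \alpha_{s-1} + m_s \beta^{(j_s)}$ via the bilinear form $(-,-)_Q$ and combining these inputs yields the per-step increment $m_s z_s - m_s^2 p_Q(\beta^{(j_s)})$.

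The main obstacle is the orbit and automorphism bookkeeping when shuttling between $\widetilde C_s$, $C_s$, and $C_{s-1}$. The $G(\alpha_s)$-orbit on the choice of ambient vector space for $N_s$ contributes $\alpha_s \cdot \alpha_s - 1 - \dim \Aut(N_s)$ to $\dim \widetilde C_s$ (using $\dim G(\alpha_s) = \alpha_s \cdot \alpha_s - 1$), while equivalence of extensions is to be modded out by $\Aut(T_{j_s}^{\oplus m_s}) \times \Aut(N_{s-1})$. Verifying that these cancellations combine with the Crawley-Boevey--Holland input to produce exactly the per-step term in the target bound is the delicate part; once that is in place, telescoping the inductive inequality from $s = 1$ up to $s = h$ completes the argument.
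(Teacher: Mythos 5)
The paper does not actually prove this proposition: it is imported verbatim as \cite[Lemma 6.2]{CB-norm}, so there is no in-paper argument to compare against. Your outline does follow the general shape of Crawley-Boevey's original proof (induction on the filtration length, a fibration of the stratum over the shorter-top-type stratum, fibre dimensions controlled by $\Ext^1$ over the preprojective algebra via the formula $\dim\Ext^1_\Pi(M,N)=\dim\homo_\Pi(M,N)+\dim\homo_\Pi(N,M)-(\mu,\nu)_Q$), and the constructibility claim via incidence varieties and Chevalley is fine. But as a proof it has genuine gaps, and they sit exactly where the content of the lemma lies.

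First, the assertion $\dim\homo_\Pi(N_{s-1},T_{j_s})=z_s$ is not correct. What the top-type conditions give you is an inequality: if $k<s$ is the largest index with $j_k=j_s$, then restriction $\homo(N_{s-1},T_{j_s})\to\homo(N_k,T_{j_s})$ is injective (a map killing $N_k$ factors through $N_{s-1}/N_k$, which has no composition factor $T_{j_s}$), so $\dim\homo(N_{s-1},T_{j_s})\le m_k$; equality can fail, and more importantly the definition of $z_s$ sets $z_s=0$ whenever $p_Q(\beta^{(j_s)})=0$ \emph{even if such a $k$ exists}, in which case $\dim\homo(N_{s-1},T_{j_s})$ can be strictly larger than $z_s$ and your per-step bound, as written, is false. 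The case $p_Q(\beta^{(j_s)})=0$ (where $T_{j_s}$ has no self-extensions, since $\dim\Ext^1_\Pi(T,T)=2p_Q(\beta)$) has to be treated by a different mechanism, and your sketch never confronts this bifurcation, which is the whole reason $z_s$ is defined the way it is. Second, the other Hom term in the formula, $\dim\homo_\Pi(T_{j_s}^{\oplus m_s},N_{s-1})$, is never estimated at all, yet it enters the $\Ext^1$ count with the same sign. Third, the orbit/automorphism bookkeeping --- which must produce the leading term $\al\cdot\al-1+p_Q(\al)$ and absorb the $\dim\Aut$ and $\dim\homo$ corrections without double counting --- is explicitly deferred (``the delicate part''). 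Since the proposition is precisely a quantitative dimension bound, deferring the quantitative step means the argument is a plan rather than a proof; to complete it you would need to carry out the count in \cite[Lemma 6.2]{CB-norm}, including the separate treatment of the $p_Q(\beta^{(j_s)})=0$ case.
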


We apply this result to obtain:

\begin{prop}\label{propNil}
Let $Q$ be the quiver with one vertex and $g\ge 2$ loops. Let $n\ge 1$ be an integer.  Let $\tau=(e_i,\beta_i)_{1\le i\le r}$ be a  representation type occurring in $X(Q,n)$. Assume that $\tau\ne(1,n)$, that is, $\tau$ is the type of a semisimple but not simple representation. Let $$q_\tau:X(Q_\tau,e)\ra M(Q_\tau,e)$$ be the affine quotient morphism. Then
$$
\dim q_\tau^{-1}(0) < 2p_Q(n) - 2\sum_{i=1}^rp_Q(\beta_i).
$$
\end{prop}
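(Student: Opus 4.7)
The plan is to apply Proposition~\ref{propCBL6} to $(Q_\tau, e)$, taking as the collection of simple representations the trivial one-dimensional ones $\epsilon_1,\ldots,\epsilon_r$ concentrated at the vertices of $Q_\tau$. By Lemma~\ref{lemFix}(ii), each $\epsilon_i$ lies in $X(Q_\tau,\epsilon_i)$ and the zero representation $0 \in X(Q_\tau, e)$ decomposes as $\bigoplus_i \epsilon_i^{\oplus e_i}$. Every $M \in q_\tau^{-1}(0)$ therefore has semisimplification $0$ and admits a top-type $(j_1,m_1;\ldots;j_h,m_h)$ with respect to the $\epsilon_i$, subject to $\sum_{s:j_s=i} m_s = e_i$ for each $i$.

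Proposition~\ref{propCBL6} then gives
\[ \dim q_\tau^{-1}(0) \le \max_{\text{top-types}} \Big( e\cdot e - 1 + p_{Q_\tau}(e) + \sum_{s} m_s z_s - \sum_s m_s^2\, p_{Q_\tau}(\epsilon_{j_s}) \Big). \]
Using Proposition~\ref{propSim}(ii) one substitutes $p_{Q_\tau}(e) = p_Q(n)$, and from Definition~\ref{defCB} one reads off $p_{Q_\tau}(\epsilon_i) = p_Q(\beta_i)$, since $Q_\tau$ has $p_Q(\beta_i)$ loops at vertex $i$. Expanding with $p_Q(\alpha) = 1 + (g-1)\alpha^2$ together with $n^2 = \sum_i e_i^2 \beta_i^2 + 2\sum_{i<j} e_i e_j \beta_i\beta_j$ from $n = \sum_i e_i\beta_i$, the desired inequality reduces to a purely combinatorial inequality in $g$, the $e_i, \beta_i$, and the top-type data. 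The crucial positive slack comes from the cross term $2(g-1)\sum_{i<j} e_i e_j \beta_i\beta_j$ in the expansion of $n^2$, which appears on the right-hand side but not on the left.

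The main obstacle is optimizing the bound over all top-types. Setting $N_i = \sum_{s:j_s=i} m_s^2$ and denoting the successive multiplicities at vertex $i$ by $m_{i,1},\ldots,m_{i,h_i}$, the elementary estimates $N_i \ge e_i$ and $\sum_s m_s z_s = \sum_i \sum_{p\ge 2} m_{i,p}m_{i,p-1} \le \tfrac{1}{2}\sum_i (e_i^2 - N_i)$ reduce the maximization to the ``maximally split'' top-types with every $m_s = 1$. The hypothesis $\tau \ne (1,n)$ is applied at this final stage: it forces either $r \ge 2$, in which case the cross-term $(g-1)\sum_{i<j} e_i e_j \beta_i\beta_j$ provides the required slack; or $r = 1$ with $e_1 \ge 2$, in which case the maximally split top-type $(1,1;\ldots;1,1)$ of length $e_1$ gives $\sum_s m_s z_s = e_1-1$ and the inequality reduces to $(g-1)(e_1+2)\beta_1^2 > e_1+1$, which holds for all $g\ge 2$ and $e_1\ge 2$. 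Combining the two cases with $g \ge 2$ yields the strict inequality.
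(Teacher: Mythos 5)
Your setup is the same as the paper's: apply Proposition \ref{propCBL6} to the trivial simples of dimension $\epsilon_i$, substitute $p_{Q_\tau}(e)=p_Q(n)$ and $p_{Q_\tau}(\epsilon_i)=p_Q(\beta_i)$, and optimize over top-types. The gap is in the optimization. Your estimate $\sum_s m_sz_s\le\tfrac12\sum_i(e_i^2-N_i)$ shows only that the \emph{estimated} upper bound is maximized at $N_i=e_i$; it does not show that the maximum of Crawley--Boevey's bound itself is attained at the maximally split top-type. Moreover the value of your estimate at $N_i=e_i$ is $\tfrac12\sum_ie_i(e_i-1)$, whereas in your final case analysis you use the value $\sum_i(e_i-1)$ that the maximally split top-type actually contributes --- these are not the same for $e_i\ge 3$, and the substitution is not licensed by your reduction. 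The discrepancy is fatal already in the most important case $r=1$, $\beta_1=1$, $g=2$, where $Q_\tau=Q$, $e=e_1=n$ and $q_\tau^{-1}(0)$ is the nilpotent cone of $X(Q,n)$: your chain of estimates yields
$$
\dim q_\tau^{-1}(0)\ \le\ n^2-1+p_Q(n)+\tfrac12 n(n-1)-2n\ =\ \tfrac52 n(n-1),
$$
while the target is $2p_Q(n)-2p_Q(1)=2n^2-2$, and $\tfrac52 n(n-1)-(2n^2-2)=\tfrac12(n-1)(n-4)\ge 0$ for $n\ge4$. So for $n\ge 4$ your bound does not give the strict inequality, and a non-maximally-split top-type has not been ruled out by any argument.

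The paper avoids this by using the sharper per-vertex estimate $\sum_{p\ge2}m_{i,p}m_{i,p-1}\le N_i-\tfrac12(m_{i,1}^2+m_{i,h_i}^2)$, which is tight (up to the $-1$) on the maximally split type, and then carries the optimization through to the single inequality $\sum_iA_i^2+\sum_i\beta_i^2<\bigl(\sum_i\beta_iA_i\bigr)^2$ with $A_i=e_i$, which is where the hypothesis $\tau\ne(1,n)$ finally enters. To repair your argument you need a bound on $\sum_sm_sz_s$ of this quality; the quadratic-in-$e_i$ slack of $\tfrac12\sum_i(e_i^2-N_i)$ is simply too large. I would also caution that your $r\ge2$ case is asserted rather than proved (``the cross-term provides the required slack''): note that $2p_Q(n)-2\sum_ip_Q(\beta_i)=2(g-1)\bigl(n^2-\sum_i\beta_i^2\bigr)-(2r-2)$, so for $r\ge 2$ the target is strictly smaller than the quantity $2(g-1)(n^2-\sum_i\beta_i^2)$ that the paper's computation works against, and the margin (e.g.\ for $e_i=\beta_i=1$) is extremely thin; this case requires the same careful bookkeeping, not a one-line appeal to the cross term.
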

\begin{proof} In this case, one can work out the right-hand side to be $2(g-1)(n^2-\sum_i\beta_i^2)$. 

{\it Step 1.} To bound the left-hand side, we will use Proposition \ref{propCBL6}. Considering the trivial representation $0$ in $X(Q_\tau,e)$, there is a decomposition 
\be\label{eqT0}
0= \oplus_{i=1}^rT_i^{\oplus e_i}\ee where $T_i$ are the simple (i.e. irreducible) non-isomorphic trivial (i.e. zero) representations of $\ol{Q}_\tau$ of dimension vectors $\eps_i$, with $\eps_i$ denoting as before the standard coordinate vectors in $\bZ^r$. In particular, $(e_i,\eps_i)_{1\le i\le r}$ is the representation type of the trivial representation in $X(Q_\tau,e)$. The fiber $q_\tau^{-1}(0)$ consists of the representations in $X(Q_\tau,e)$ with semisimplification precisely $0= \oplus_{i=1}^rT_i^{\oplus e_i}$. Hence each element in $q_\tau^{-1}(0)$ admits a top-type with respect to $T_1,\ldots, T_r$. Then, by Proposition \ref{propCBL6}, $\dim q_\tau^{-1}(0)$ is at most the maximum value among the quantities
\be\label{eqTT}
e\cdot e -1 +p_{Q_\tau}(e) +\sum_{s=1}^hm_sz_s - \sum_{s=1}^hm_s^2p_{Q_\tau}(\eps_{j_s})
\ee
for top-types $(j_1,m_1;\ldots;j_h,m_h)$ with respect to $T_1,\ldots, T_r$. Here $j_s\in\{1,\ldots,r\}$ and $m_s$ positive natural numbers, and
$$
z_s=\left\{
\begin{array}{ll}
0, & \text{ if }p_{Q_\tau}(\eps_{j_s})=0, \text{ or if }\nexists\; k<s \text{ with }j_k=j_s,\\
m_k, & \text{ for the largest }k<s \text{ with }j_k=j_s. 
\end{array}
\right.
$$
By definition of top-type, this is the data coming from an element $M\in q_\tau^{-1}(0)$ with a filtration $0=M_0\subset M_1\subset \ldots\subset M_h=M$ by sub-representations $M_s/M_{s-1}\simeq T_{j_s}^{\oplus m_s}$ and such that $\dim\homo (M_{s},T_{j_s})=m_s$ for all $s$.

{\it Step 2.} We use the special case we are in to simplify (\ref{eqTT}). Namely, $p_{Q_\tau}(e)=p_Q(n)=1+(g-1)n^2$ and $p_{Q_\tau}(\eps_i)=1+(g-1)\beta_i^2$. We plug this into (\ref{eqTT}) and we want to check that (\ref{eqTT}) is strictly smaller than $2(g-1)(n^2-\sum_i\beta_i^2)$. That is, we want that
\be\label{eqTT2}
\sum_{i=1}^re_i^2 +\sum_{s=1}^hm_sz_s-\sum_{s=1}^hm_s^2(1+(g-1)\beta_{j_s}^2) < (g-1)n^2-2(g-1)\sum_{i=1}^r\beta_i^2 .
\ee

Despite of the complicated description of the set of all possible top-types, the only information we will use now to draw our conclusion from (\ref{eqTT2}) is that the number of copies of $T_i$'s in a filtration giving rise to a top-type has to match the number of copies in the decomposition (\ref{eqT0}), namely, that for all $1\le i\le r$,
$$
e_i=\sum_{s\text{ with }j_s=i}m_s.
$$
This gives the equalities $$n=\sum_{i=1}^re_i\beta_i=\sum_{i=1}^r\sum_{s:j_s=i}m_s\beta_i=\sum_{s}m_s\beta_{j_s}$$ 
$$\sum_{i=1}^re_i^2=\sum_{i=1}^r(\sum_{s:j_s=i}m_s)^2.$$

We use these equalities to phrase (\ref{eqTT2}) only in terms of $j_s$, $m_s$, $z_s$, $\beta_i$, and $g$, eliminating the terms $n$ and $e_i$. We single out $g$ together with its coefficient, place it on the right-hand side, and  move the other summands from one side to the other until they achieve a positive sign. Then (\ref{eqTT2}) becomes equivalent to
$$
\sum_{i}(\sum_{s:j_s=i}m_s)^2 +  \sum_sm_sz_s + \sum_sm_s^2\beta_{j_s}^2 +(\sum_sm_s\beta_{j_s})^2 < $$
$$
 <\sum_sm_s^2 + 2\sum_i\beta_i^2 + g\left[ \sum_sm_s^2\beta_{j_s}^2 +(\sum_sm_s\beta_{j_s})^2 -2\sum_i\beta_i^2 \right].
$$
Since all $T_i$ must appear in the semisimplification of $M$, the set  $\{\beta_{j_s}\mid s\}$ is the whole set  $\{\beta_i\mid i\}$ with repetitions allowed. Since $m_s, \beta_i >0$, the coefficient of $g$ must be non-negative. Hence it is enough if we prove the inequality for $g=2$. That is, we need to show that
\be\label{eqTT3}
\sum_{i}(\sum_{s:j_s=i}m_s)^2 +  \sum_sm_sz_s +2\sum_i\beta_i^2 
< \sum_sm_s^2 + \sum_sm_s^2\beta_{j_s}^2+(\sum_sm_s\beta_{j_s})^2.
\ee

{\it Step 3.} By the definition of $z_s$, the inequality is equivalent with the one where the top-type is rearranged so that
$$
1=j_1=\ldots=j_{s_1};\; 2=j_{s_1+1}=\ldots=j_{s_2};\; \ldots\; ;\; r=j_{s_{r-1}+1}=\ldots=j_{s_r}=j_h.
$$
Then the sequence $z_1,\ldots,z_h$ is the sequence
$$
0,m_1,m_2,\ldots,m_{s_1-1};\; 0,m_{s_1+1},\ldots, m_{s_2-1};\; \ldots\; ;\; 0,m_{s_{r-1}+1},\ldots,m_{s_r-1}. 
$$
With this, (\ref{eqTT3}) becomes
$$
[(m_1+\ldots + m_{s_1})^2 +\ldots +(m_{s_{r-1}+1}+\ldots +m_{s_r})^2] +
$$
$$+
[(m_2m_1+m_3m_2+\ldots +m_{s_1}m_{s_1-1})+\ldots +(m_{s_{r-1}+2}m_{s_{r-1}+1}+\ldots +m_{s_r}m_{s_{r}-1}) ] +
$$
$$
+2\sum_i\beta_i^2 < [m_1^2+\ldots +m_{s_r}^2] +$$
$$
+ [\beta_1^2(m_1^2+\ldots+m_{s_1}^2)+\ldots + \beta_r^2(m_{s_{r-1}+1}^2+\ldots +m_{s_r}^2)]+
$$
$$
+[\beta_1(m_1+\ldots m_{s_1})+\ldots +\beta_r(m_{s_{r-1}+1}+\ldots +m_{s_r})]^2.
$$
By bounding the second line of summands  via
$$
m_2m_1+\ldots +m_{s_1}m_{s_1-1}\le \frac{1}{2}(m_2^2+m_1^2+\ldots +m_{s_1}^2+m_{s_1-1}^2) =
$$
$$
=m_1^2+\ldots +m_{s_1}^2 -\frac{1}{2}(m_1^2+m_{s_1}^2), 
$$
it is enough to prove that
$$
[(m_1+\ldots + m_{s_1})^2 +\ldots +(m_{s_{r-1}+1}+\ldots +m_{s_r})^2] +2\sum_i\beta_i^2 <
$$
$$
\frac{1}{2}[m_1^2+m_{s_1}^2+\ldots+m_{s_{r-1}+1}^2+m_{s_r}^2]+$$
$$+ [\beta_1^2(m_1^2+\ldots+m_{s_1}^2)+\ldots + \beta_r^2(m_{s_{r-1}+1}^2+\ldots +m_{s_r}^2)]+
$$
$$
+[\beta_1(m_1+\ldots m_{s_1})+\ldots +\beta_r(m_{s_{r-1}+1}+\ldots +m_{s_r})]^2.
$$
Defining 
$$
A_i=m_{s_{i-1}+1}+\ldots +m_{s_i},
$$
the inequality is implied by
\be\label{eqTT4}
A_1^2+\ldots +A_r^2 +\beta_1^2+\ldots +\beta_r^2 < [\beta_1A_1+\ldots +\beta_rA_r]^2
\ee
if we can show that this last one is true. Using that
$$
x^2+y^2\le x^2y^2+1\quad \text{for integers }x, y\ge 1,
$$
the left-hand side of (\ref{eqTT4}) is at most $$\beta_1^2A_1^2+\ldots +\beta_r^2A_r^2+r.$$
If $r\ge 2$, $$r\le\sum_{i\ne j}\beta_iA_i\beta_jA_j.$$ Therefore in this case
$$
A_1^2+\ldots +A_r^2 +\beta_1^2+\ldots +\beta_r^2\le \beta_1^2A_1^2+\ldots +\beta_r^2A_r^2+\sum_{i\ne j}\beta_iA_i\beta_jA_j =
$$ 
$$
=[\beta_1A_1+\ldots +\beta_rA_r]^2,
$$
which proves (\ref{eqTT4}) for this case. If $r=1$, (\ref{eqTT4}) reduces to $A_1^2+\beta_1^2<\beta_1^2A_1^2$. By assumption, $A_1, \beta_1> 1$, so the inequality  holds in this case as well.
\end{proof}

\begin{prop}\label{propZBD}
Let $Q$ be the quiver with one vertex and $g\ge 2$ loops. Let $n\ge 1$ be an integer.  Let $\tau=(e_i,\beta_i)_{1\le i\le r}$ be a  representation type occurring in $X(Q,n)$ with $\tau\ne(1,n)$, that is, $\tau$ is the representation type of a semisimple but not simple representation. Then
$$
\dim Z(Q_\tau,e) < 2p_Q(n).
$$
\end{prop}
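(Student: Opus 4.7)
The plan is to deduce Proposition \ref{propZBD} by applying Lemma \ref{propEXX} to the pair $(Q_\tau, e)$ in place of $(Q,\al)$, using Proposition \ref{propNil} to verify the hypothesis. Concretely, I want to check the inequality
$$
\dim q_\tau^{-1}(q_\tau(0)) < 2\bigl(p_{Q_\tau}(e) - \#\{\text{loops in } Q_\tau\}\bigr),
$$
which is precisely the hypothesis needed to invoke Lemma \ref{propEXX}.

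First I would compute the ingredients on the right-hand side. By Definition \ref{defCB}, the double quiver $\ol{Q_\tau}$ has exactly $2p_Q(\beta_i)$ loops at vertex $i$, so $Q_\tau$ itself has $p_Q(\beta_i)$ loops at vertex $i$, giving a total of $\sum_{i=1}^r p_Q(\beta_i)$ loops. Next, since $Q$ is the quiver with one vertex and $g\ge 2$ loops, $X(Q,n)$ contains a simple representation (this is standard, and underlies \cite[Theorem 1.1]{CB} used in the introduction); Proposition \ref{propSim}(i) then guarantees that $X(Q_\tau, e)$ contains a simple representation as well, and part (ii) of the same proposition yields the identification $p_{Q_\tau}(e) = p_Q(n)$.

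Combining these two computations, the hypothesis of Lemma \ref{propEXX} applied to $(Q_\tau, e)$ becomes
$$
\dim q_\tau^{-1}(q_\tau(0)) < 2p_Q(n) - 2\sum_{i=1}^r p_Q(\beta_i),
$$
which is exactly the conclusion of Proposition \ref{propNil} (note that the fiber $q_\tau^{-1}(q_\tau(0)) = q_\tau^{-1}(0)$ since $0 \in X(Q_\tau, e)$ has closed $G(e)$-orbit, being fixed by $G(e)$). Therefore Lemma \ref{propEXX} applies and yields
$$
\dim Z(Q_\tau, e) < 2\,p_{Q_\tau}(e) = 2\,p_Q(n),
$$
which is the desired inequality.

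No new estimates are needed beyond those already established, so the only subtlety in this argument is purely bookkeeping: correctly extracting the loop count from Definition \ref{defCB} (remembering that $\ol{Q_\tau}$, not $Q_\tau$, is what is directly specified there) and verifying that the hypotheses for applying Proposition \ref{propSim}(ii) hold for the initial quiver with $g\ge 2$ loops. The main obstacle is thus not a technical one at this stage --- the real work has been absorbed into Proposition \ref{propNil}, whose proof is a delicate inequality checked via Crawley-Boevey's top-type dimension bound --- and Proposition \ref{propZBD} reduces to assembling the pieces.
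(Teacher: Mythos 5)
Your proposal is correct and is essentially identical to the paper's own proof, which likewise deduces the statement by applying Lemma \ref{propEXX} to $(Q_\tau,e)$, feeding in Proposition \ref{propNil} and the observation that $Q_\tau$ has $\sum_{i=1}^r p_Q(\beta_i)$ loops, with $p_{Q_\tau}(e)=p_Q(n)$ supplied by Proposition \ref{propSim}(ii). The bookkeeping you flag (halving the loop count of $\ol{Q_\tau}$, and the existence of simple representations in $X(Q,n)$ needed for Proposition \ref{propSim}) is handled the same way in the paper.
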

\begin{proof} 
It follows from Lemma \ref{propEXX} and Proposition \ref{propNil}, since $\sum_{i=1}^rp_{Q}(\beta_i)$ is the number of loops in $Q_\tau$ by definition of $Q_\tau$.
\end{proof}

\begin{rmk}  The inequality does not hold for $\tau=(1,n)$. Indeed, in this case $Z(Q_\tau,e)$ consists of the simple locus in $X(Q,n)$, which by Theorem \ref{thrmCBd} is open and of dimension $n^2-1+2p_Q(n)$.
\end{rmk}

Proposition \ref{propZBD} has the following more explicit equivalent formulation:

\begin{prop}\label{propEZBD}
 Let $\mathscr{C}$ be the class of quivers parametrized up to the direction of the arrows by the set $$\bigcup_{r>0} \bZ_{\ge 2}\times\bZ_{>0}^{r},$$ such that a quiver $Q$ associated to $(g, \beta)\in \bZ_{\ge 2}\times\bZ_{>0}^{r}$ has: $r$ vertices, $1+(g-1)\beta_i^2$ loops at the vertex $i$, and $(g-1)\beta_i\beta_j$ arrows between the vertices $i$ and $j$ with $i\ne j$, there being no restriction on the direction of these arrows. Then, for every $\al\in\bZ_{>0}^r$, excluding the case $r=\al=1$,
$$
\dim Z(Q,\al)< 2p_Q(\al).
$$
\end{prop}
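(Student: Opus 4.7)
The plan is to recognize $\mathscr{C}$ as the class of pairs $(Q_\tau,e)$ arising in Proposition~\ref{propZBD}, so that Proposition~\ref{propEZBD} becomes a direct translation. Let $Q'$ be the quiver with one vertex and $g\ge 2$ loops. Given $Q\in\mathscr{C}$ with parameters $(g,\beta_1,\ldots,\beta_r)$ and a dimension vector $\alpha\in\bZ_{>0}^r$ with $(r,\alpha)\ne(1,1)$, set $n:=\sum_i\alpha_i\beta_i$ and $\tau:=(\alpha_i,\beta_i)_{1\le i\le r}$; I aim to identify $(Q,\alpha)$ with $(Q_\tau,e)$.

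First I would check that $\tau$ occurs as a representation type in $X(Q',n)$, which requires $r$ pairwise non-isomorphic simple representations of $\ol{Q'}$ of respective dimensions $\beta_i$ lying in the moment-map zero loci. Since $g\ge 2$, one has $p_{Q'}(\beta)=1+(g-1)\beta^2>0$ for every $\beta\ge 1$, so by Theorem~\ref{thrmCBd}(d) the variety $M(Q',\beta)$ contains an open dense subset of isomorphism classes of simples, of positive dimension $2p_{Q'}(\beta)$. Thus the supply of non-isomorphic simples of each positive dimension is infinite, and the required $r$-tuple can always be selected. The excluded case $r=\alpha=1$ corresponds precisely to $\tau=(1,n)$, which is excluded in Proposition~\ref{propZBD}.

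Next, for $Q'$ one computes $\langle\beta_i,\beta_j\rangle_{Q'}=(1-g)\beta_i\beta_j$, giving $p_{Q'}(\beta_i)=1+(g-1)\beta_i^2$ and $-(\beta_i,\beta_j)_{Q'}=2(g-1)\beta_i\beta_j$. Unpacking Definition~\ref{defCB} yields a combinatorial description of $Q_\tau$ that matches, up to orientation, the description of $Q$ in $\mathscr{C}$ with the given parameters. Since $X(Q_\tau,e)$ depends only on the underlying undirected graph, the identification $(Q,\alpha)=(Q_\tau,e)$ is canonical. Under this identification, Proposition~\ref{propSim}(ii) yields $p_Q(\alpha)=p_{Q_\tau}(e)=p_{Q'}(n)$, and the schemes $Z(Q,\alpha)$ and $Z(Q_\tau,e)$ coincide, so the bound $\dim Z(Q_\tau,e)<2p_{Q'}(n)$ of Proposition~\ref{propZBD} transports to $\dim Z(Q,\alpha)<2p_Q(\alpha)$. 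The only nontrivial step is the combinatorial dictionary between $\mathscr{C}$ and the pairs $(Q_\tau,e)$; no further geometric input is needed.
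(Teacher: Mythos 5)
Your proposal is correct and follows essentially the same route as the paper: identify each $(Q,\al)$ with $Q\in\mathscr{C}$ as a pair $(Q_\tau,e)$ arising from the one-vertex quiver with $g\ge 2$ loops (using the abundance of non-isomorphic simples of every dimension to realize $\tau=(\al_i,\beta_i)_i$ as a representation type in $X(Q',n)$ with $n=\sum_i\al_i\beta_i$), then transport the bound of Proposition~\ref{propZBD}. Your explicit appeal to Proposition~\ref{propSim}(ii) for $p_Q(\al)=p_{Q'}(n)$ and your remark on orientation-independence make explicit two small steps the paper leaves implicit.
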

\begin{proof} Let $Q_g$ be the quiver with 1 vertex and $g>1$ loops. For an integer $m>1$, $p_Q(m)=1+(g-1)m^2>0$. So by Theorem \ref{thrmCBd}, $X(Q_g,m)$ has lots of simple representations. By taking direct sums of simple representations of various dimensions, one has that an equivalent condition for $\tau=(\al_i,\beta_i)_{1\le i\le r}$ to be a representation type occurring in $X(Q_g,n)$ is that $\sum_{i=1}^r{\al_i\beta_i}=n$.
One applies now Proposition \ref{propZBD} with $\al_i=e_i$ and the explicit description in this case of $Q_\tau$ from Definition \ref{defCB}. \end{proof}

\section{Jets along zeros of moment maps}\label{s3}

We fix as before an algebraically closed field $k$ of characteristic zero. For $X$ a variety over $k$ and an integer $m\ge 1$, we let $$\pi_m:X_m\ra X$$ denote the projection from the $m$-jet scheme. Recall that the $m$-jets are the elements of $$X_m(k)=\homo_{k-\rm{sch}}(\spec(k[t]/t^{m+1}),X).$$
If $X\subset \bA^n$ is given by $f_1(x)=\ldots=f_r(x)=0$ with $x=(x_1,\ldots,x_n)$ and $f_i\in k[x]$, then $X_m$ is given in $\bA^{n(m+1)}$ with coordinates $x,x',x'',\ldots,x^{(m)}$, where $x^{(j)}=(x^{(j)}_1,\ldots,x_n^{(j)})$, by 

$$
f_1(x(t))\equiv \ldots \equiv f_r(x(t))\equiv 0 \mod t^{m+1}
$$
with $x_i(t)=x_i+x'_it+x''_it+\ldots +x_i^{(m)}t^m$.

We will use the following relationship between jets schemes and rational singularities due to M. Musta\c{t}\u{a}:

\begin{thrm}\label{thrmMus}{\rm{}(}\cite[Propositions 1.4 and 1.5]{Mus}{\rm{)}}
Let $X$ be a locally complete intersection variety. The following are equivalent for $m\ge 1$:

(i) $X_m$ is irreducible,

(ii) $\dim \pi_m^{-1}(X_{sing})<(\dim X)(m+1)$,

(iii) $X_m$ is a locally complete intersection variety of dimension $\le (\dim X)(m+1)$.
\end{thrm}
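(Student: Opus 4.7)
The plan is to work with the local complete-intersection presentation of $X_m$. Locally, embed $X\hookrightarrow \bA^n$ and write $X=V(f_1,\dots,f_r)$ with $r=n-d$ where $d=\dim X$. Then $X_m\subset \bA^{n(m+1)}$ is cut out by the $r(m+1)$ coefficients of the expansions $f_i(x(t))\bmod t^{m+1}$, so by Krull's Hauptidealsatz every irreducible component of $X_m$ has dimension at least $n(m+1)-r(m+1)=d(m+1)$.

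Next, I would use that the restriction of $\pi_m$ to the complement of $\pi_m^{-1}(X_{sing})$ is Zariski-locally trivial with fibre $\bA^{dm}$, since at a smooth point of $X$ the higher-order jet coordinates can be solved for freely in terms of the lower ones from the defining equations. Hence $U:=X_m\setminus \pi_m^{-1}(X_{sing})$ is smooth and irreducible of dimension exactly $d(m+1)$, and its closure $W$ in $X_m$ is an irreducible component of dimension $d(m+1)$. Because $U$ and $\pi_m^{-1}(X_{sing})$ partition $X_m$, any irreducible component of $X_m$ other than $W$ must lie entirely inside $\pi_m^{-1}(X_{sing})$; by the component bound of the previous paragraph such a component has dimension at least $d(m+1)$.

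Given these two facts the equivalences assemble cleanly. Condition (i) says that $W$ is the only component of $X_m$, and by the dichotomy of the previous paragraph this is equivalent to $\dim \pi_m^{-1}(X_{sing})<d(m+1)$, giving (i)$\Leftrightarrow$(ii). For (i)$\Leftrightarrow$(iii), if $X_m$ is irreducible then $\dim X_m=\dim W=d(m+1)$, and since $X_m$ is defined by exactly $r(m+1)$ equations in $\bA^{n(m+1)}$, equality of dimension with the Hauptidealsatz lower bound forces $X_m$ to be a local complete intersection; reducedness, and hence integrality (and so ``variety'' in the paper's sense), follows from Cohen--Macaulayness of local complete intersections together with generic reducedness on the open dense smooth piece $U$. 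Conversely (iii) implies (i) because ``variety'' here means integral, hence irreducible.

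The main technical nuisance I would expect is verifying the $\bA^{dm}$-bundle structure of $\pi_m^{-1}(X\setminus X_{sing})\to X\setminus X_{sing}$ carefully: it amounts to checking that over a smooth point one can iteratively solve the vanishing conditions on the Taylor coefficients of $f_i(x(t))$ using the surjectivity of the Jacobian, which is a standard but slightly delicate formal-smoothness computation. Once that is in place, the rest of the argument is dimension counting against the complete-intersection lower bound $d(m+1)$.
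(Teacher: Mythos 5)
This result is quoted in the paper directly from Musta\c{t}\u{a} \cite[Propositions 1.4 and 1.5]{Mus} without proof, and your argument is correct and essentially reproduces the standard proof given there: the Hauptidealsatz lower bound $d(m+1)$ on components of $X_m$ from the local complete-intersection presentation, the Zariski-locally trivial $\bA^{dm}$-fibration of $\pi_m^{-1}(X\setminus X_{sing})$ over the smooth locus (via an \'etale map to $\bA^d$ and compatibility of jet schemes with \'etale morphisms), and Cohen--Macaulayness plus generic smoothness for reducedness in (i)$\Rightarrow$(iii). No gaps.
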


\begin{thrm}\label{thrmMus2} {\rm{}(}\cite[Theorems 0.1 and 3.3]{Mus}{\rm{)}}
Let $X$ be a locally complete intersection variety. The following are also equivalent:

(a) the conditions (i)-(iii) are fulfilled for all $m\ge 1$,

(b) $X$ has rational singularities,

(c) $X$ has canonical singularities.

\end{thrm}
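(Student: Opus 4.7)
The plan is to split the three-way equivalence into (b)$\Leftrightarrow$(c), which is a general fact about Gorenstein varieties, and (a)$\Leftrightarrow$(c), which is the genuinely new input and requires motivic integration on arc spaces. Since $X$ is a locally complete intersection, $X$ is automatically Gorenstein, so its canonical sheaf is a line bundle and the discrepancies $a(E,X)$ along prime divisors $E$ over $X$ are integers.

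For (b)$\Leftrightarrow$(c), I would invoke Elkik's classical theorem that a Gorenstein variety has rational singularities if and only if it has canonical singularities; no further input about jets is needed here.

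For (a)$\Leftrightarrow$(c), the strategy is to relate $\dim\pi_m^{-1}(X_{\mathrm{sing}})$ to the log discrepancies on a fixed log resolution $f\colon Y\to X$. Write $K_{Y/X}=\sum_i a_i E_i$ over the exceptional prime divisors, so that by definition $X$ has canonical singularities exactly when all $a_i\ge 0$. The key tool is Kontsevich's change of variables formula, developed for arc spaces by Denef--Loeser: the induced morphism $f_\infty\colon Y_\infty\to X_\infty$ between arc spaces has Jacobian controlled by the divisor $K_{Y/X}$. Stratifying $\pi_m^{-1}(X_{\mathrm{sing}})$ according to the orders of vanishing of a lifted arc along each $E_i$, and pushing down from $Y_m$ to $X_m$ fiber by fiber, one obtains an explicit formula that shows $\dim\pi_m^{-1}(X_{\mathrm{sing}})<(m+1)\dim X$ for every $m\ge 1$ if and only if $a_i\ge 0$ for all $i$. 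Combined with Theorem \ref{thrmMus}, which already equates (i)--(iii) at each fixed $m$, this yields (a)$\Leftrightarrow$(c).

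The main obstacle is the motivic-integration machinery itself: one has to set up arc spaces on the singular variety $X$, justify the change-of-variables formula across the resolution in this l.c.i.\ generality, and track Jacobian contributions from the $E_i$ precisely enough to obtain the sharp dimension inequality rather than just an asymptotic one. Once that foundational work is in place, the equivalence (a)$\Leftrightarrow$(c) reduces to dimension-counting organized by the coefficients $a_i$, and the full theorem follows by chaining with Elkik's result.
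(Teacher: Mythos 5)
This statement is not proven in the paper: it is quoted verbatim from Musta\c{t}\u{a} \cite[Theorems 0.1 and 3.3]{Mus}, so there is no in-paper argument to compare against. Your sketch is a faithful outline of the strategy of the cited proof --- the equivalence (b)$\Leftrightarrow$(c) via Elkik's theorem for Gorenstein (hence l.c.i.) varieties, and (a)$\Leftrightarrow$(c) via the Denef--Loeser/Kontsevich change-of-variables formula on a log resolution, with the discrepancies of $K_{Y/X}$ governing the dimension count for $\pi_m^{-1}(X_{\mathrm{sing}})$ --- so it takes essentially the same route as the source, modulo the substantial technical work you correctly flag as the main obstacle.
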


\begin{lemma}\label{lemFixJet} For a quiver $Q$ and dimension vector $\al$,
let $\pi_m:X(Q,\al)_m\ra X(Q,\al)$ be the projection from the $m$-jet scheme, $m\ge 1$. Then 
$$\pi_m^{-1} (F(Q,\al))\simeq\left\{
\begin{array}{ll}
F(Q,\al)\times X(Q,\al)_{m-2}\times \Rep(\ol{Q},\al) & \text{ if }m\ge 2,\\
F(Q,\al)\times \Rep(\ol{Q},\al) & \text{ if }m=1.
\end{array}\right.
$$
\end{lemma}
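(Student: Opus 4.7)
The plan is to write out the defining equations of the jet scheme and exploit the fact, recorded in Lemma \ref{lemFix}(i), that every component of a point $x_0\in F(Q,\al)$ is a central matrix (scalar on loops, zero otherwise). A lift of $x_0$ to an $m$-jet is a tuple of polynomials
\[
x_a(t)=x_{0,a}+tx_{1,a}+\cdots+t^m x_{m,a},\qquad x_{a^*}(t)=x_{0,a^*}+tx_{1,a^*}+\cdots+t^m x_{m,a^*},
\]
for $a\in Q$, satisfying $\sum_{a\in Q}[x_a(t),x_{a^*}(t)]\equiv 0\pmod{t^{m+1}}$. Writing $x_a(t)=x_{0,a}+t\,y_a(t)$ with $y_a(t)=x_{1,a}+x_{2,a}t+\cdots+x_{m,a}t^{m-1}$, and similarly for $a^*$, the centrality of $x_{0,a}$ and $x_{0,a^*}$ kills the cross terms, so
\[
[x_a(t),x_{a^*}(t)]=t^2\,[y_a(t),y_{a^*}(t)].
\]
Hence the $m$-jet equation is equivalent to
\[
\sum_{a\in Q}[y_a(t),y_{a^*}(t)]\equiv 0\pmod{t^{m-1}}.
\]

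When $m=1$ this is a condition modulo $t^0$, hence no condition at all, and the $x_1=(x_{1,a},x_{1,a^*})_{a\in Q}\in\Rep(\ol{Q},\al)$ is completely free. Thus the assignment $x(t)\mapsto(x_0,x_1)$ gives the desired isomorphism $\pi_1^{-1}(F(Q,\al))\simeq F(Q,\al)\times \Rep(\ol Q,\al)$.

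For $m\ge 2$, split off the top coefficient: $y_a(t)=\tilde y_a(t)+x_{m,a}t^{m-1}$ with $\tilde y_a(t)$ of degree $\le m-2$. Every term involving $x_{m,a}$ or $x_{m,a^*}$ in $[y_a(t),y_{a^*}(t)]$ has $t$-degree $\ge m-1$, so vanishes modulo $t^{m-1}$. The constraint therefore becomes
\[
\sum_{a\in Q}[\tilde y_a(t),\tilde y_{a^*}(t)]\equiv 0\pmod{t^{m-1}},
\]
which is precisely the condition that $\tilde y=(\tilde y_a,\tilde y_{a^*})_{a\in Q}$ defines an $(m-2)$-jet of $X(Q,\al)$, while $x_m\in\Rep(\ol Q,\al)$ remains unconstrained. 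Sending $x(t)\mapsto(x_0,\tilde y,x_m)$ produces a morphism to $F(Q,\al)\times X(Q,\al)_{m-2}\times\Rep(\ol Q,\al)$ with an inverse given by reassembling these pieces. Since the identification of the defining ideals is algebraic, it promotes set-theoretic bijection to a scheme isomorphism. The only care needed is the bookkeeping of which coefficients $x_{i,a}$ end up in the $\Rep(\ol Q,\al)$ factor (the top one, $x_m$) versus the $(m-2)$-jet factor (the middle ones, $x_1,\ldots,x_{m-1}$); the main obstacle is purely notational, and there is no genuine analytic difficulty because the centrality of $x_0$ decouples the equations cleanly.
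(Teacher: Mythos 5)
Your proof is correct and follows essentially the same route as the paper's: both expand the moment map equation on a jet lying over a fixed point and use Lemma \ref{lemFix}(i) to kill every commutator term involving the constant coefficients, leaving exactly the equations of the $(m-2)$-jet scheme in the middle coefficients with the top coefficient free. Your factorization $[x_a(t),x_{a^*}(t)]=t^2[y_a(t),y_{a^*}(t)]$ is just a tidier packaging of the paper's term-by-term cancellation of the graded pieces.
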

\begin{proof} As before, $F(Q,\al)$ denotes the locus in $X(Q,\al)$ fixed by the action of $G(\al)$. The $m$-jet scheme $X(Q,\al)_m$ is the subscheme of
$$
\bigoplus_{a\in Q}\left( \Mat (\al_{h(a)}\times \al_{t(a)},k[t]/t^{m+1})  \oplus  \Mat (\al_{h(a^*)}\times \al_{t(a^*)},k[t]/t^{m+1})\right)
$$
defined by 
$$
\sum_{a\in Q}[x(t)_a,x(t)_{a^*}]=0 \in \bigoplus_{i\in I}\Mat(\al_i, k[t]/t^{m+1}).
$$
Writing $x(t)_a=\sum_{j=0}^m x_{a,j}t^j$ with $x_{a,j}$ a matrix over $k$, this is equivalent with the system of equations
$$
\sum_{a\in Q}[x_{a,0},x_{a^*,0}] =0$$
$$
\sum_{a\in Q}\left([x_{a,0},x_{a^*,1}] + [x_{a,1},x_{a^*,0}] \right) =0
$$
$$
\sum_{a\in Q}\left([x_{a,0},x_{a^*,2}] + [x_{a,1},x_{a^*,1}] + [x_{a,2},x_{a^*,0}]\right) =0
$$
$$
\cdots
$$
$$
\sum_{a\in Q}\left([x_{a,0},x_{a^*,m}] + [x_{a,1},x_{a^*,m-1}] +\ldots + [x_{a,m-1},x_{a^*,1}] + [x_{a,m},x_{a^*,0}]\right) =0.
$$
If $x(t)=(x(t)_a,x(t)_{a^*})_{a\in Q}$ is an $m$-jet in $X(Q,\al)_m$ whose projection is the fixed point $$x_0=\pi_m(x(t))=(x_{a,0},x_{a^*,0})_{a\in Q},$$ then all the commutator terms involving $x_{a,0}$ or $x_{a^*,0}$ are zero, by Lemma \ref{lemFix}. The conclusion follows. 
\end{proof}

As before, let $Z(Q,\al)$ denote the subset of points in $X(Q,\al)$ with $G(\al)$-orbit closure containing a $G(\al)$-fixed point.

\begin{lemma}\label{lemRSA}
 Let $Q$ be a quiver. Let $\al$ be a dimension vector with $p_Q(\al)>0$.  Assume that { $X(Q,\al)$ contains a simple representation, and that} for all semisimple but not simple representation types $\tau=(e,\beta)$ occurring in $X(Q,\al)$,  
 $$\dim Z(Q_\tau,e)< 2p_Q(\al).$$
Then $X(Q,\al)$ has rational singularities. 
\end{lemma}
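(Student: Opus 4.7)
The plan is to apply Musta\c{t}\u{a}'s jet-scheme criterion (Theorems \ref{thrmMus} and \ref{thrmMus2}) to $X:=X(Q,\al)$: by Theorem \ref{thrmCBd}(a) this is a reduced complete-intersection variety of dimension $d:=\al\cdot\al-1+2p_Q(\al)$, so rational singularities are equivalent to $\dim\pi_m^{-1}(X_{\rm sing})<d(m+1)$ for every $m\ge 1$. Since simple representations are smooth points by Theorem \ref{thrmCBd}(e), $X_{\rm sing}\subseteq\bigcup_\tau X_\tau$ as $\tau$ ranges over the finitely many semisimple-but-not-simple representation types occurring in $X$, so it suffices to bound $\dim\pi_m^{-1}(X_\tau)$ for each such $\tau=(e,\beta)$.

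Fix such a $\tau$ and pick a semisimple $x_0\in X_\tau$ of type $\tau$. Corollary \ref{propSim0} and Proposition \ref{propStrat} yield \'etale-local isomorphisms $(X,x_0)\simeq (G(\al)x_0,x_0)\times(X(Q_\tau,e),0)$ and $(X_\tau,x_0)\simeq (G(\al)x_0,x_0)\times(Z(Q_\tau,e),0)$; these transfer to jet schemes because the formation of jets commutes with \'etale morphisms. Writing $\pi'_m$ for the $m$-jet projection of $X(Q_\tau,e)$, one obtains $\dim X_\tau=\dim G(\al)x_0+\dim Z(Q_\tau,e)$ and $\dim\pi_m^{-1}(x_0)=m\dim G(\al)x_0+\dim (\pi'_m)^{-1}(0)$. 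Upper semi-continuity of jet-fiber dimension together with $G(\al)$-equivariance (every orbit closure in $X_\tau$ contains a semisimple orbit of type $\tau$) shows that $\dim\pi_m^{-1}(x)$ achieves its maximum over $X_\tau$ at $x_0$, whence $\dim\pi_m^{-1}(X_\tau)\le\dim X_\tau+\dim\pi_m^{-1}(x_0)$.

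By Lemma \ref{lemFixJet}, $\dim(\pi'_m)^{-1}(0)$ equals $\dim X(Q_\tau,e)_{m-2}+\dim\Rep(\ol{Q_\tau},e)$ for $m\ge 2$ (and $\dim\Rep(\ol{Q_\tau},e)$ for $m=1$); by Proposition \ref{propSim}(ii), $\dim\Rep(\ol{Q_\tau},e)=2(e\cdot e-1+p_{Q_\tau}(e))=2(e\cdot e-1+p_Q(\al))$. Straightforward arithmetic then reduces the desired bound $\dim\pi_m^{-1}(X_\tau)<d(m+1)$ to the single inequality $\dim Z(Q_\tau,e)+\dim X(Q_\tau,e)_{m-2}<(m-1)(e\cdot e-1)+2mp_Q(\al)$, with the convention $\dim X(Q_\tau,e)_{-1}:=0$ so that the case $m=1$ collapses to exactly $\dim Z(Q_\tau,e)<2p_Q(\al)$, which is the hypothesis.

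To close, I will induct on $m$, proving simultaneously for \emph{every} pair $(Q',\al')$ satisfying the hypotheses of the lemma that $\dim X(Q',\al')_m=(m+1)\dim X(Q',\al')$; by Theorem \ref{thrmMus} this is equivalent to the jet-fiber bound. The cases $m=0,1$ are base cases (the first trivial, the second reducing to $\dim Z<2p_Q(\al)$ as above). For $m\ge 2$, the key observation is that $(Q_\tau,e)$ itself satisfies the hypotheses of the lemma: $X(Q_\tau,e)$ contains a simple representation and $p_{Q_\tau}(e)=p_Q(\al)$ by Proposition \ref{propSim}, while Proposition \ref{prop1T1} identifies its semisimple-but-not-simple representation types with those types $\tau''\ge\tau$ occurring in $X$, via identical $Z$-schemes. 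The induction hypothesis applied to $(Q_\tau,e)$ at order $m-2$ then gives $\dim X(Q_\tau,e)_{m-2}=(m-1)(e\cdot e-1+2p_Q(\al))$; substituting leaves exactly the slack required by $\dim Z(Q_\tau,e)<2p_Q(\al)$ to force the strict inequality. The main technical obstacles are the compatibility of the \'etale slice decomposition with jet schemes, and checking that the hypotheses descend to $(Q_\tau,e)$ so that the induction on $m$ closes cleanly.
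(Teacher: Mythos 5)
Your proposal is correct and follows the same strategy as the paper's proof: Musta\c{t}\u{a}'s criterion, reduction to the strata $X_\tau$, the \'etale-slice factorization $(X,x)\simeq (Gx,x)\times(X(Q_\tau,e),0)$ transferred to jet schemes, Lemma \ref{lemFixJet} to peel off two jet orders over the fixed locus, upper semicontinuity of jet-fiber dimensions, and induction on $m$. The one genuine point of divergence is how the induction closes: the paper keeps the induction internal to the fixed pair $(Q,\al)$, bounding $\dim X(Q_\tau,e)_{m-2}\le \dim X_{m-2}-(\dim Gx)(m-1)$ by reading the slice isomorphism backwards, whereas you apply the inductive hypothesis directly to $(Q_\tau,e)$, which obliges you to verify that the hypotheses of the lemma descend to $(Q_\tau,e)$ via Propositions \ref{propSim} and \ref{prop1T1} (including that simple types correspond to simple types under the bijection of Proposition \ref{prop1T1}, so the hypothesis on $Z$-dimensions transfers). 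Both routes are valid and cost about the same: the descent of hypotheses that your version needs inside the induction is exactly what the paper proves separately afterwards (in the unnamed Proposition and Theorem \ref{thrmEXT}) to get the stronger statement that every $X(Q_\tau,e)$ has rational singularities. One small imprecision: the equality $\dim X(Q',\al')_m=(m+1)\dim X(Q',\al')$ is implied by, but not literally equivalent to, condition (ii) of Theorem \ref{thrmMus}; since your induction actually establishes the strict fiber bound at each stage and only consumes the resulting dimension (in)equality at order $m-2$, this does not affect the argument.
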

\begin{proof} {For this proof we will use $X=X(Q,\al)$, $M=M(Q,\al)$, and $G=G(\al)$ to ease the notation.}

By Theorem \ref{thrmCBd}, $X$ is a complete intersection variety, smooth at the simple representations. Therefore we can use Theorem \ref{thrmMus2} to show that $X$ has rational singularities. Namely, letting $\pi_m:X_m\ra X$ be the projection from the $m$-th jet scheme, we will show that 
\be\label{eqS}\dim \pi_m^{-1}(X_{sing})<(\dim X)(m+1)\ee
for all $m\ge 1$.

Since $X_{sing}$ is contained in the union over all non-simple representation types $\tau=(e,\beta)$ occurring in $X$ of the strata $X_\tau$ from Definition \ref{defnStr}, it is enough to show that 
$$
\dim \pi_m^{-1}(X_\tau)<(\dim X)(m+1)
$$
for all $m\ge 1$.

Let $x\in X_\tau$ be a semisimple representation. Then  we have the \'etale-local isomorphisms 
\be\label{eqISG}
(X,x)\simeq (Gx,x)\times (X(Q_\tau,e),0)
\ee
over saturated open affine neighborhoods restricting to an \'etale-local isomorphism
$$
(X_\tau,x)\simeq (Gx,x)\times (Z(Q_\tau,e),0)
$$
by Proposition \ref{propStrat}. Recall that the formation of $m$-jets commutes with taking \'etale covers, see \cite[Proposition 1.1]{Mus}. Hence there is an \'etale-local isomorphism 
\be\label{eqXtau}
\pi_m^{-1}(X_\tau) \simeq (Gx)_m \times \pi_{\tau,m}^{-1}(Z(Q_\tau,e))
\ee
above the saturated open affine neighborhoods from (\ref{eqISG}), where $(Gx)_m$ is the $m$-jet scheme of the orbit $Gx$, and $\pi_{\tau,m}:X(Q_\tau,e)_m\ra X(Q_\tau,e)$ is the projection from the $m$-jet scheme of $X(Q_\tau,e)$. We stress that in (\ref{eqXtau})  the \'etale-local isomorphism might not even be defined globally on some \'etale covers.

We will compute dimensions in (\ref{eqXtau}). In doing so, we do not have to restrict above a saturated affine neighborhood of $x\in X$ since $X_\tau$ is saturated by definition, and  there is an \'etale-local isomorphism $(X,x)\simeq (X,x')$ over saturated affine neighborhoods for any $x'\in X_\tau$ semisimple by (\ref{eqISG}). Similarly, in computing the dimension on the right-hand side, we do not have to restrict above a saturated affine neighborhood of $0$ in $X(Q_\tau,e)$.  

Since $Gx$ is smooth irreducible, $$\dim (Gx)_m= (\dim Gx)(m+1).$$ 
Thus,
$$
\dim \pi_m^{-1}(X_\tau) = (\dim Gx)(m+1)+\dim \pi_{\tau,m}^{-1}(Z(Q_\tau,e)).
$$

Since $
(X(Q_\tau,e), y)\simeq (X(Q_\tau,e),0)
$ for all semisimple $y\in Z(Q_\tau,e)$, we have $\dim \pi_{\tau,m}^{-1}(y)=\dim \pi_{\tau,m}^{-1}(0)$. Since the dimensions of the fibers of $\pi_{\tau,m}$ obey upper semi-continuity, see \cite[Proposition 2.3]{Mu2}, we have then
$$
\dim \pi_m^{-1}(X_\tau) \le (\dim Gx)(m+1) + \dim Z(Q_\tau,e)+\dim \pi_{\tau,m}^{-1}(0).
$$
By assumption,
$$
\dim Z(Q_\tau,e)<2p_Q(\al) = \dim M,
$$
the last equality being true by Theorem \ref{thrmCBd}. Hence
\be\label{eqImp}
\dim \pi_m^{-1}(X_\tau) < (\dim Gx)(m+1) +\dim M+\dim \pi_{\tau,m}^{-1}(0).
\ee

Consider the case $m=1$. By Lemma \ref{lemFixJet}, 
$$\dim \pi^{-1}_{\tau,m}(0)=\dim \Rep(\ol{Q_\tau},e)=2(e\cdot e-1+p_{Q_\tau}(e)).$$
We have $\dim G_x=\dim G(e) = e\cdot e -1$, and $2p_{Q_\tau}(e)=2p_Q(\al)=\dim M$ by Proposition \ref{propSim}. Hence
$$
\dim \pi^{-1}_{\tau,1}(0)= 2\dim G_x + \dim M,
$$
and so (\ref{eqImp}) becomes
$$
\dim\pi_1^{-1}(X_\tau) < 2(\dim Gx +\dim M + \dim G_x) = 2(\dim G +\dim M)=2\dim X,
$$
the last equality  being true by Theorem \ref{thrmCBd}. This proves the case $m=1$ of (\ref{eqS}).

Let now $m\ge 2$. By Lemma \ref{lemFixJet}, 
\be\label{eqANA}
\dim \pi^{-1}_{\tau,m}(0)=\dim X(Q_\tau,e)_{m-2}+\dim \Rep(\ol{Q_\tau},e).
\ee

Every representation type that occurs in $X(Q_\tau,e)$ actually occurs in any $G(e)$-saturated open affine neighborhood of $0\in X(Q_\tau,e)$, cf. the first paragraph of the proof of Proposition \ref{prop1T1}. By using the group action, every point in $X(Q_\tau,e)$ outside the semisimple locus, has an open neighborhood isomorphic to an open neighborhood of a point arbitrarily close to the semisimple locus. Therefore the dimension of $X(Q_\tau,e)_{m-2}$ can be computed above any saturated open affine neighborhood of $0\in X(Q_\tau,e)$. Hence, we can use
 (\ref{eqISG}), which gives us that
$$
\dim X(Q_\tau,e)_{m-2} \le \dim X_{m-2} - \dim (Gx)_{m-2} =\dim X_{m-2} - (\dim Gx)(m-1).
$$

By induction on $m$, we can assume that $$\dim \pi_{m-2}^{-1}(X_\tau)< (\dim X)(m-1)$$ for all semisimple but not simple representation type $\tau$ occurring in $X$. Since $$\dim \pi_{m-2}^{-1}(X\setminus X_{sing})=(\dim X)(m-1),$$ we have $$\dim X_{m-2}\le (\dim X)(m-1).$$ Therefore
$$
\dim X(Q_\tau,e)_{m-2}\le (\dim X-\dim Gx)(m-1).
$$
With this and  (\ref{eqANA}),  we get from (\ref{eqImp}) that
$$
\dim \pi_m^{-1}(X_\tau) < (\dim Gx)(m+1) +\dim M + (\dim X - \dim Gx)(m-1)+$$
$$ + 2\dim G_x +\dim M = (\dim G_x +\dim X)(m+1)=(\dim X)(m+1).
$$
\end{proof}

\begin{prop}
Let $Q$ be a quiver with a dimension vector $\al$ such that $p_Q(\al)>0$.  Assume that { $X(Q,\al)$ contains a simple representation, and that} for all semisimple but not simple representation types $\tau=(e,\beta)$ occurring in $X(Q,\al)$, the inequality (\ref{eqClass}) from Lemma \ref{propEXX} holds for $(Q_\tau,e)$. Then $X(Q_\tau,e)$ has rational singularities for all representation types $\tau=(e,\beta)$ occurring in $X(Q,\al)$.
\end{prop}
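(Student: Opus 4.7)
The plan is to apply Lemma \ref{lemRSA} to $X(Q_\tau,e)$ for each non-simple semisimple type $\tau$ occurring in $X(Q,\al)$, while handling the simple type by direct inspection. If $\tau=(1,\al)$ is the simple type, then by Definition \ref{defCB} the quiver $Q_\tau$ has one vertex and $p_Q(\al)$ loops, and $e=(1)$, so $G(e)$ is trivial and the moment map vanishes identically on $\Rep(\ol{Q_\tau},1)\cong k^{2p_Q(\al)}$ (commutators of scalars are zero). Thus $X(Q_\tau,e)$ is an affine space, and rational singularities are automatic.

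Now assume $\tau=(e,\beta)$ is semisimple but not simple. To apply Lemma \ref{lemRSA} to $(Q_\tau,e)$ I need three things: (i) $p_{Q_\tau}(e)>0$; (ii) $X(Q_\tau,e)$ contains a simple representation; (iii) for every semisimple but not simple representation type $\tau'$ occurring in $X(Q_\tau,e)$ one has $\dim Z((Q_\tau)_{\tau'},e')<2p_{Q_\tau}(e)$. Items (i) and (ii) follow at once from Proposition \ref{propSim}: part (i) there gives a simple representation in $X(Q_\tau,e)$, and part (ii) gives $p_{Q_\tau}(e)=p_Q(\al)$, which is positive by hypothesis.

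The central step is (iii). Given such a $\tau'$, let $\tau''$ be the corresponding representation type in $X(Q,\al)$ with $\tau''\ge\tau$ under the bijection of Proposition \ref{prop1T1}(i). Part (ii) of the same proposition yields $X((Q_\tau)_{\tau'},e')=X(Q_{\tau''},e'')$, and hence $Z((Q_\tau)_{\tau'},e')=Z(Q_{\tau''},e'')$. Because the bijection respects the partial order, the maximum type of $X(Q_\tau,e)$, namely the simple one $(1,e)$ with trivial stabilizer, corresponds to the maximum type of $X(Q,\al)$, namely $(1,\al)$; so $\tau''$ is semisimple but not simple whenever $\tau'$ is. The hypothesis of the present proposition then applies to $(Q_{\tau''},e'')$, and Lemma \ref{propEXX} gives $\dim Z(Q_{\tau''},e'')<2p_{Q_{\tau''}}(e'')$. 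Applying Proposition \ref{propSim}(ii) once more, $p_{Q_{\tau''}}(e'')=p_Q(\al)=p_{Q_\tau}(e)$, and (iii) follows. Lemma \ref{lemRSA} then delivers rational singularities of $X(Q_\tau,e)$.

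The main technical point is the order-compatibility of the bijection in Proposition \ref{prop1T1}(i), which guarantees that the property "semisimple but not simple" is preserved when passing between types in $X(Q,\al)$ and types in $X(Q_\tau,e)$; once this and the equality $p_{Q_{\tau''}}(e'')=p_{Q_\tau}(e)$ are in place, the proof is a direct assembly of Proposition \ref{prop1T1}, Proposition \ref{propSim}, Lemma \ref{propEXX}, and Lemma \ref{lemRSA}.
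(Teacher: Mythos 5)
Your proposal is correct and follows exactly the paper's route: the paper's proof is the one-line assembly "by Proposition \ref{propSim}, Proposition \ref{prop1T1}, and Lemma \ref{propEXX}, apply Lemma \ref{lemRSA} to $X(Q_\tau,e)$," and you have simply supplied the details — the transfer of hypothesis (iii) via the order-compatible bijection of Proposition \ref{prop1T1} and the equality $p_{Q_{\tau''}}(e'')=p_Q(\al)=p_{Q_\tau}(e)$ — together with the (correct, if strictly optional) direct observation that $X(Q_\tau,e)$ is an affine space when $\tau$ is the simple type.
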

\begin{proof} By Proposition \ref{propSim}, Proposition \ref{prop1T1}, Lemma \ref{propEXX}, we can apply Lemma \ref{lemRSA} to $X(Q_\tau,e)$.
\end{proof}

A direct consequence of this is:

\begin{thrm}\label{thrmEXT}
Let $\mathscr{M}$ be a class of quivers with dimension vectors $(Q,\al)$ satisfying:
\begin{itemize}
\item $\mathscr{M}$ is closed under the operation of producing quivers with dimension vectors $(Q_\tau,e)$ via Definition \ref{defCB}; and
\item For every $(Q,\al)$ in $\mathscr{M}$, $X(Q,\al)$ contains a simple representation and $p_Q(\al)>0$; and
\item For every $(Q,\al)$ in $\mathscr{M}$ such that $X(Q,\al)$ contains semisimple but not simple representations, the statement (\ref{eqClass}) holds. That is, the dimension of the union of all $G(\al)$-orbits in $X(Q,\al)$ whose closures contain the trivial representation is strictly smaller than
$$
2p_Q(\al) - 2\cdot\#(\text{loops in } Q).
$$
\end{itemize}
Then $X(Q,\al)$ has rational singularities for every $(Q,\al)$ in $\mathscr{M}$.
\end{thrm}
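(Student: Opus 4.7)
The plan is to apply Lemma \ref{lemRSA} directly to an arbitrary $(Q,\al)\in\mathscr{M}$. The first two hypotheses of that lemma, namely $p_Q(\al)>0$ and that $X(Q,\al)$ contains a simple representation, are handed to us by the second bullet in the definition of $\mathscr{M}$. What remains is to verify that $\dim Z(Q_\tau,e)<2p_Q(\al)$ for every semisimple but not simple representation type $\tau=(e,\beta)$ occurring in $X(Q,\al)$.

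Fix such a $\tau$. By the first bullet (closedness of $\mathscr{M}$ under the passage $(Q,\al)\mapsto(Q_\tau,e)$ of Definition \ref{defCB}), the pair $(Q_\tau,e)$ lies again in $\mathscr{M}$. Since $X(Q,\al)$ contains a simple representation, Proposition \ref{propSim} supplies both the fact that $X(Q_\tau,e)$ contains a simple representation and the equality $p_{Q_\tau}(e)=p_Q(\al)$. To trigger the third bullet of the definition of $\mathscr{M}$ at the pair $(Q_\tau,e)$, one needs that $X(Q_\tau,e)$ contains a semisimple but not simple representation. By Lemma \ref{lemFix}(ii) the zero representation $0\in X(Q_\tau,e)$ has type $(e_i,\eps_i)_{1\le i\le r}$, and because $\tau$ was assumed not simple one has $r\ge 2$ or some $e_i\ge 2$, so this type is not simple. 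The third bullet therefore gives inequality (\ref{eqClass}) for $(Q_\tau,e)$, and Lemma \ref{propEXX} turns this into the desired estimate $\dim Z(Q_\tau,e)<2p_{Q_\tau}(e)=2p_Q(\al)$.

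With all three hypotheses of Lemma \ref{lemRSA} in place, the conclusion that $X(Q,\al)$ has rational singularities follows. There is no serious obstacle: Theorem \ref{thrmEXT} is essentially a repackaging of Lemma \ref{lemRSA}, Lemma \ref{propEXX}, and Proposition \ref{propSim}, with the closure axiom of $\mathscr{M}$ ensuring that the reduction to $(Q_\tau,e)$ stays inside the class where those results continue to apply. The only point deserving attention is the observation above that the zero representation of $X(Q_\tau,e)$ is of a non-simple semisimple type whenever $\tau$ itself is non-simple, which is what guarantees the third bullet in the definition of $\mathscr{M}$ is not vacuous at $(Q_\tau,e)$.
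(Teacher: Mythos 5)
Your proof is correct and follows essentially the same route as the paper, which deduces Theorem \ref{thrmEXT} from Lemma \ref{lemRSA} via Proposition \ref{propSim} and Lemma \ref{propEXX}; your observation that the zero representation of $X(Q_\tau,e)$ has the non-simple semisimple type $(e_i,\eps_i)_i$ (Lemma \ref{lemFix}) correctly justifies invoking the third bullet of the hypothesis at $(Q_\tau,e)$, which lies in $\mathscr{M}$ by the closure axiom. The only cosmetic difference is that the paper routes through an intermediate proposition and also cites Proposition \ref{prop1T1}, which you do not need because you apply Lemma \ref{lemRSA} directly to $(Q,\al)$ rather than to every $X(Q_\tau,e)$.
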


\begin{thrm}\label{thrmExtend}
For every quiver $Q$ in $\mathscr{C}$, the class of quivers defined in Proposition \ref{propEZBD}, and for every dimension vector $\al\in \bZ_{>0}^r$ on the set of vertices of $Q$ with non-zero dimension at each vertex, the variety $X(Q,\al)$ has rational singularities.
\end{thrm}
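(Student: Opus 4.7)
The plan is to apply Lemma \ref{lemRSA} directly to each pair $(Q,\al)$ with $Q\in\mathscr{C}$ and $\al\in\bZ_{>0}^r$ on the vertex set of $Q$, using Proposition \ref{propEZBD} as the source of the required dimension bound. Throughout, let $Q_g$ denote the quiver with one vertex and $g\ge 2$ loops. The key initial observation is that every quiver $Q\in\mathscr{C}$ associated to $(g,\beta)\in\bZ_{\ge 2}\times\bZ_{>0}^r$, together with any dimension vector $\al\in\bZ_{>0}^r$, arises via Definition \ref{defCB} as $((Q_g)_\tau,e)$ for the representation type $\tau=(\al_i,\beta_i)_{1\le i\le r}$ in $X(Q_g,n)$, where $n=\sum_i \al_i\beta_i$ and $e=\al$. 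That $\tau$ actually occurs as a representation type in $X(Q_g,n)$ follows, as in the proof of Proposition \ref{propEZBD}, from the existence of simples of every positive dimension in $X(Q_g,m)$ for $g\ge 2$ and from taking direct sums.

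With this identification, the first two hypotheses of Lemma \ref{lemRSA} are immediate: by Proposition \ref{propSim}(ii), $p_Q(\al)=p_{Q_g}(n)=1+(g-1)n^2>0$, and by Proposition \ref{propSim}(i) a simple representation in $X(Q_g,n)$ produces a simple in $X((Q_g)_\tau,e)=X(Q,\al)$. It remains to verify, for every representation type $\tau'=(e',\beta')$ in $X(Q,\al)$ that is semisimple but not simple, the inequality
\[
\dim Z(Q_{\tau'},e')<2p_Q(\al).
\]

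To this end, I would use Proposition \ref{prop1T1}(ii) to produce a representation type $\tau''$ in $X(Q_g,n)$ with $(Q_{\tau'},e')=((Q_g)_{\tau''},e'')$. In particular $Q_{\tau'}$ lies in $\mathscr{C}$, and two applications of Proposition \ref{propSim}(ii) give $p_{Q_{\tau'}}(e')=p_{Q_g}(n)=p_Q(\al)$. Since $\tau'$ is semisimple but not simple, the pair $(r',e')$, where $r'$ is the number of vertices of $Q_{\tau'}$, cannot equal $(1,1)$; indeed the excluded case in Proposition \ref{propEZBD} corresponds precisely to $\tau'$ being a single simple representation. Applying Proposition \ref{propEZBD} to $(Q_{\tau'},e')$ therefore yields $\dim Z(Q_{\tau'},e')<2p_{Q_{\tau'}}(e')=2p_Q(\al)$, as required, and Lemma \ref{lemRSA} concludes that $X(Q,\al)$ has rational singularities.

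The main obstacle is essentially bookkeeping. All the substantial content, namely the dimension bound on $Z(Q_\tau,e)$ via the nilpotent cone estimates, is already contained in Proposition \ref{propEZBD} (which rests on Proposition \ref{propNil} and Lemma \ref{propEXX}), while Lemma \ref{lemRSA} converts this bound into rational singularities by means of Musta\c{t}\u{a}'s jet scheme criterion. The only subtlety is tracking the correspondence of representation types from Proposition \ref{prop1T1} carefully enough that semisimple-but-not-simple types in $X(Q,\al)$ correspond to pairs $(r',e')\ne (1,1)$ and thus avoid the excluded case of Proposition \ref{propEZBD}.
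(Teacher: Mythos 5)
Your proposal is correct and follows essentially the same route as the paper: the paper simply packages the combination of Lemma \ref{lemRSA}, Lemma \ref{propEXX}, Proposition \ref{propNil}, and the closure property from Proposition \ref{prop1T1} into Theorem \ref{thrmEXT} and then invokes that, whereas you unroll the same ingredients and apply Lemma \ref{lemRSA} directly via Proposition \ref{propEZBD}. Your careful tracking that semisimple-but-not-simple types avoid the excluded case $(r',e')=(1,1)$ is exactly the point the paper handles implicitly.
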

\begin{proof}
Consider the class $\mathscr{M}$ of quivers with dimension vectors $(Q,\al)$ such that $Q$ is in $\mathscr{C}$ and $\al\in \bZ_{>0}^r$. By definition, $\mathscr{M}$ the class obtained by applying Definition \ref{defCB} to quivers with one vertex and at least two loops. By Proposition \ref{prop1T1}, $\mathscr{M}$ is closed under the operation of producing new elements via Definition \ref{defCB}. Moreover, (\ref{eqClass}) holds for all $(Q,\al)$ in $\mathscr{M}$ by Proposition \ref{propNil}. The conclusion follows from Theorem \ref{thrmEXT}.
\end{proof}

\noindent{\it Proof of Theorem \ref{thrmRSQ}.} The quiver with one vertex and $g\ge 2$ loops lies by definition in the class $\mathscr{C}$. The conclusion follows from Theorem \ref{thrmExtend}. $\hfill\Box$

\section{$\GL_n$-representations of surface groups}\label{s4}

In this section $k=\bC$. For a compact Riemann surface $C_g$ of genus $g$ and a point $p\in C_g$ we will use the simpler notation
$$
R(g,n) = \homo(\pi_1(C_g,p), \GL_n(k))
$$
and
$$
M(g,n)=R(g,n)\sslash \GL_n(k)
$$
for the affine quotient. The closed points of $M(g,n)$ are into 1-1 correspondence with the isomorphism classes of semisimple $k$-local systems of rank $n$ on $C_g$. For $g\ge 2$, $R(g,n)$ is a normal complete intersection variety by \cite[Theorem 11.1]{Si}, while $M(g,n)$ is a symplectic variety by \cite[Proposition 8.4]{BS}, hence it has rational singularities.

We need the following from deformation theory, see for example the case $k=0$ of \cite[Lemma 7.10]{BW}:

\begin{prop}\label{propDEF}  Let $g\ge 2$. Let $\rho\in R(g,n)$ be a semisimple complex representation and $L\in M(g,n)$ the associated semisimple complex local system. Let 
$$
\mu:\Ext^1(L,L)\lra \Ext^2(L,L),\quad \mu(e)=e\cup e.
$$
There is an isomorphism of formal germs
$$
\widehat{R(g,n)}_\rho\simeq \widehat{\frak{h}}_0\times \widehat{\mu^{-1}(0)}_0,
$$
for some affine space $\mathfrak{h}$.
\end{prop}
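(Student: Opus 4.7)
The plan is to combine a formal slice decomposition at $\rho$ with the DGLA formality of the de Rham complex on the curve $C_g$. Since $\rho$ is semisimple, its $\bGL_n$-orbit is closed and its stabilizer $G_\rho$ is reductive; the formal version of Luna's étale slice theorem then yields a decomposition
$$
\widehat{R(g,n)}_\rho \;\simeq\; \widehat{(\bGL_n/G_\rho)}_{e} \times \widehat{S}_\rho \;=\; \widehat{\mathfrak h}_0 \times \widehat{S}_\rho,
$$
where $\mathfrak h=\mathfrak{gl}_n/\mathfrak g_\rho$ is a smooth factor of dimension $\dim B^1(\pi_1(C_g),\mathrm{Ad}\rho)$ and $S$ is a formal slice through $\rho$ transverse to the orbit. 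By the usual cocycle computation, its Zariski tangent space is $H^1(\pi_1(C_g),\mathrm{Ad}\rho) = \mathrm{Ext}^1(L,L)$.

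It then remains to identify $\widehat{S}_\rho$ with $\widehat{\mu^{-1}(0)}_0$. The slice represents the formal deformation functor of $L$ as a local system on $C_g$. By Goldman--Millson, this functor is controlled by the DGLA $\mathfrak g_L^\bullet = \Omega^\bullet(C_g,\mathrm{End}(L))$, whose cohomology is $\mathrm{Ext}^\bullet(L,L)$. A priori, $\widehat{S}_\rho$ is cut out inside $\mathrm{Ext}^1(L,L)$ by a formal power series of obstructions whose leading term is the quadratic map $\mu(e)=e\cup e$ and whose higher-order corrections are the Massey products of $\mathfrak g_L^\bullet$. Because $C_g$ is a compact Kähler manifold, the formality theorem of Deligne--Griffiths--Morgan--Sullivan, in its DGLA form due to Goldman--Millson and extended to arbitrary reductive coefficients by Simpson, asserts that $\mathfrak g_L^\bullet$ is quasi-isomorphic as a DGLA to its cohomology equipped with zero differential and bracket equal to cup product, namely $(\mathrm{Ext}^\bullet(L,L),\, d=0,\, \cup)$. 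Deformation functors associated to DGLAs depend only on the quasi-isomorphism type, so $\widehat{S}_\rho$ coincides with the formal Maurer--Cartan locus of this cohomology DGLA; all higher Massey products vanish, and the Maurer--Cartan equation reduces to the pure quadratic $\mu(e)=0$. Hence $\widehat{S}_\rho\simeq \widehat{\mu^{-1}(0)}_0$, which together with the first display yields the asserted isomorphism.

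The key technical input, and the hardest part to invoke precisely, is the Kähler formality of $\mathfrak g_L^\bullet$. Beyond that, the remaining ingredients are standard: the formal Luna decomposition is available because $\rho$ has closed orbit and reductive stabilizer, and the identification of the Maurer--Cartan picture with the Kuranishi/obstruction description of $\widehat{R(g,n)}_\rho$ amounts to linearizing the defining equation $\prod_i [x_i,y_i]y_ix_i^{-1}y_i^{-1}=I$ and matching the derivatives with the cohomological differentials. A minor bookkeeping point is that the gauge directions separate cleanly into the factor $\widehat{\mathfrak h}_0$: this works precisely because, after applying Luna, the residual gauge on the slice is controlled by $G_\rho$, which has already been divided out.
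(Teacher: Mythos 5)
Your argument is correct and follows essentially the same route as the paper, which simply cites \cite[Lemma 7.10]{BW} for this statement and notes in the subsequent remark that the key input is the formality of the controlling DGLA, established by Simpson \cite{Si-form}; your Goldman--Millson/Maurer--Cartan presentation, with the smooth factor split off via the orbit (coboundary) directions, is exactly the content of that cited lemma. The only loose point is the phrase that the slice ``represents the formal deformation functor of $L$'' --- it is rather a hull/Kuranishi space for it, since the residual $G_\rho$-action has not been quotiented out --- but this does not affect the identification with $\widehat{\mu^{-1}(0)}_0$.
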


\begin{rmk}
(a) Recall that
$
\Ext^\ubul(L,L)=H^\ubul (C_g, L\otimes L^\vee).
$

(b) For two points $x\in X$ and $y\in Y$ on two algebraic varieties, the following are equivalent by Artin approximation: $x$ and $y$ have isomorphic complex analytic, or \'etale, or formal neighborhoods.

(c) The main ingredient of the proof of Proposition \ref{propDEF} is the formality of the differential graded Lie algebra controlling the deformation theory of a semisimple representation. This was proved by \cite{Si-form} using transcendental techniques, hence the reason for the field $k$ to be $\bC$.
\end{rmk}


The map $\mu$ is the moment map of a double quiver, see \cite[3.1]{KLS}, where one has to replace Serre duality for coherent sheaves with duality for local systems \cite[Corollary 3.3.12]{Di}, or see  \cite[Theorem 8.6]{BS} for this version with local systems which we state now:

\begin{prop}\label{propL} Let  $$L\simeq L_1^{\oplus e_1}\oplus\cdots\oplus L_r^{\oplus e_r}$$ be a semisimple complex local system of representation type $$\tau=(e_1,\beta_1;\cdots;e_r,\beta_r).$$ on $C_g$, with $\rank(L_i)=\beta_i$. Let $Q$  be the quiver with one vertex and $g$ loops. Consider the associated quiver $Q_\tau$, dimension vector $e$, and group $G(e)$ as is Definition \ref{defCB}. Then:

(a) $\Aut(L)/k^*\simeq G(e)$.

(b) There is a commutative diagram
$$
\xymatrix{
\Ext^1(L,L)  \ar[rr]^\sim \ar[d]_{\mu}& &\Rep(\ol{Q_\tau},e)\ar[d]^{\mu_{Q_\tau,e}}\\
\Ext^2(L,L) \ar[rr]^\sim& & \ehom(e)
}
$$
where the horizontal maps are $G(e)$-equivariant isomorphisms.

(c) There is a $G(e)$-equivariant isomorphism
$$
\mu^{-1}(0) \simeq X(Q_\tau,e).
$$
\end{prop}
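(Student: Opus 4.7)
\medskip

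\noindent\textbf{Proof plan for Proposition \ref{propL}.}

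The strategy is to reduce everything to the simple isotypic components of $L$, and then compare dimensions and algebraic structures with those prescribed by Definition \ref{defCB}. First I would dispose of part (a): Schur's lemma for simple local systems gives $\homo(L_i,L_j)=k\cdot\delta_{ij}$, so that
$$
\enmo(L)=\bigoplus_{i,j}\homo(L_j,L_i)\otimes\homo_k(k^{e_j},k^{e_i})=\bigoplus_i\Mat(e_i,k).
$$
Taking units and quotienting by the diagonal scalars yields $\Aut(L)/k^*=\bigl(\prod_i\GL_{e_i}(k)\bigr)/k^*=G(e)$. The $G(e)$-action obtained this way is precisely the one on $\Ext^\bullet(L,L)$ induced from functoriality.

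For (b), the key decomposition is
$$
\Ext^q(L,L)=\bigoplus_{i,j}\Ext^q(L_j,L_i)\otimes\homo_k(k^{e_j},k^{e_i}),\qquad q=0,1,2.
$$
Schur's lemma gives $\Ext^0(L_j,L_i)=k\cdot\delta_{ij}$, and Poincar\'e duality for local systems (\cite[Corollary 3.3.12]{Di}) provides a perfect pairing $\Ext^1(L_j,L_i)\otimes \Ext^1(L_i,L_j)\to \Ext^2(L_i,L_i)=k\cdot\delta_{ij}$, together with $\Ext^2(L_j,L_i)=\Ext^0(L_i,L_j)^\vee=k\cdot\delta_{ij}$. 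Computing Euler characteristics on $C_g$ via $\chi(C_g,\mathcal{H}om(L_j,L_i))=(2-2g)\beta_i\beta_j$ yields
$$
\dim\Ext^1(L_j,L_i)=2\delta_{ij}+2(g-1)\beta_i\beta_j.
$$
Comparing with Definition \ref{defCB} and with $p_Q(\beta_i)=1+(g-1)\beta_i^2$, this is exactly the count of arrows of $\overline{Q_\tau}$ at the corresponding vertices. So I can choose a basis of $\Ext^1(L_j,L_i)$ for each pair, splitting it via the Poincar\'e pairing into dual pairs; each such pair corresponds to an arrow $a$ of $Q_\tau$ together with its reverse $a^*$ (loops at vertex $i$ split symplectically into $p_Q(\beta_i)$ dual pairs, and off-diagonal arrows pair with their reverses across the Serre dual). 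Tensoring with the $\homo_k(k^{e_j},k^{e_i})$ factors produces the $G(e)$-equivariant isomorphism $\Ext^1(L,L)\simeq \Rep(\overline{Q_\tau},e)$ of the upper row, and a parallel argument using $\Ext^2(L_i,L_i)=k$ gives $\Ext^2(L,L)\simeq\enmo(e)$. Both are $G(e)$-equivariant by naturality of the decomposition.

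The real content is the commutativity of the square. This reduces to checking that the cup product pairing $\Ext^1\otimes\Ext^1\to\Ext^2$, composed with the natural composition of matrix factors, is, up to the chosen identification of Poincar\'e-dual pairs with $(a,a^*)$, given by $e\mapsto \sum_{a}[x_a,x_{a^*}]$. The ingredients are: (i) graded-commutativity of cup product on $H^\bullet$ (the sign $-1$ for $1\otimes 1\to 2$ produces the commutator bracket rather than a symmetric pairing); (ii) for $a,a^*$ a Poincar\'e-dual pair the cup product contributes $\pm 1$ to the trace piece of $\Ext^2$; (iii) all cross-terms between non-dual pairs vanish, since the Poincar\'e pairing between non-dual basis elements is zero. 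Writing this out in a chosen basis is straightforward bookkeeping and matches the calculation already carried out by Kaledin--Lehn--Sorger \cite[3.1]{KLS} in the coherent-sheaf setting, so I would simply transport their argument to the local-system side using \cite[Theorem 8.6]{BS}. I expect this compatibility of signs and duals to be the one technical point requiring care.

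Finally, (c) is a formal consequence of (b): taking the zero fibre of the left-hand vertical map in the diagram and using the $G(e)$-equivariant horizontal isomorphisms gives $\mu^{-1}(0)\simeq \mu_{Q_\tau,e}^{-1}(0)=X(Q_\tau,e)$ as $G(e)$-schemes.
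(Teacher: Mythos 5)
Your proposal is correct and follows essentially the same route as the paper: the identification of $\mu$ with a quiver moment map is delegated to \cite[3.1]{KLS} and \cite[Theorem 8.6]{BS}, and the substantive check is the Euler-characteristic and duality computation showing $\dim\Ext^1(L_j,L_i)=2\delta_{ij}+2(g-1)\beta_i\beta_j$, which matches the arrow counts of $\ol{Q_\tau}$ from Definition \ref{defCB}. Your added detail on part (a) and on the dual-pair bookkeeping for the cup product is consistent with what those references supply.
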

\begin{proof}
The quiver realizing this identification from \cite{KLS, BS} is described as having $r$ vertices and its double having $\dim \Ext^1(L_i,L_j)$ arrows from $i$ to $j$. The identification with $Q_\tau$ follows from Definition \ref{defCB} and the equalities
$$
\dim \Ext^1(L_i,L_j)=\left\{
\begin{array}{cll}
2p_{Q}(\beta_i)& =2(g-1)\beta_i^2+2, & \text{ if }i=j,\\
-(\beta_i,\beta_j)_{Q} & =2(g-1)\beta_i\beta_j, & \text{ if }i\ne j.
\end{array}
\right.
$$
We recall how one checks these equalities. One has
$$
\chi(C_g, L_i^\vee\otimes L_j)=\rank(L_i^\vee\otimes L_j)\chi(C_g)=\beta_i\beta_j2(1-g)
$$
by \cite[Example 3.3.13]{Di}. On the other hand, by duality $$h^2(C_g, L_i^\vee\otimes L_j)=h^0(C_g, L_i\otimes L_i^\vee)=\dim\homo(L_j,L_i).$$ This equals 1 if $i=j$, and it equals 0 if $i\ne j$ since the $L_i$ are simple non-isomorphic local systems. Hence $h^1(C_g, L_i^\vee\otimes L_j)=\dim\Ext^1(L_i,L_j)$ has the claimed value.
\end{proof}

The previous two propositions imply:

\begin{cor}\label{thrmXQg}
Let $g\ge 2$. Let $\rho\in R(g,n)$ be a semisimple complex representation of type $\tau=(e_1,\beta_1;\ldots;e_r,\beta_r)$. Let $Q$ be the quiver with one vertex and $g$ loops. There is an isomorphism of formal germs
$$
\widehat{R(g,n)}_\rho\simeq \widehat{\mathfrak{h}}_0\times \widehat{X(Q_\tau,e)}_0
$$
for some affine space $\mathfrak{h}$.
\end{cor}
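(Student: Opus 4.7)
The plan is to combine Proposition~\ref{propDEF} and Proposition~\ref{propL}(c) directly, as the corollary is essentially their concatenation. First I would set $L$ to be the semisimple complex local system on $C_g$ corresponding to the semisimple representation $\rho$; the fact that the representation type of $\rho$ is $\tau = (e_1,\beta_1;\ldots;e_r,\beta_r)$ translates into the decomposition $L \simeq L_1^{\oplus e_1} \oplus \cdots \oplus L_r^{\oplus e_r}$ with $\operatorname{rank}(L_i) = \beta_i$ and the $L_i$ pairwise non-isomorphic simple local systems.

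Next, I would apply Proposition~\ref{propDEF} to obtain the formal-germ isomorphism
\[
\widehat{R(g,n)}_\rho \simeq \widehat{\mathfrak{h}}_0 \times \widehat{\mu^{-1}(0)}_0,
\]
where $\mu : \operatorname{Ext}^1(L,L) \to \operatorname{Ext}^2(L,L)$ is the cup product and $\mathfrak{h}$ is a suitable affine space. Then, since $L$ is exactly of the representation type appearing in the hypotheses of Proposition~\ref{propL}, part (c) of that proposition identifies $\mu^{-1}(0)$ $G(e)$-equivariantly with $X(Q_\tau, e)$, where $Q_\tau$ is the quiver associated by Definition~\ref{defCB} to the quiver $Q$ with one vertex and $g$ loops and to the representation type $\tau$.

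Finally, passing to formal completions at the distinguished points (the origin in $\mu^{-1}(0)$ maps to the origin $0 \in X(Q_\tau,e)$ under the identification of Proposition~\ref{propL}(c), since both correspond to the zero element of $\operatorname{Ext}^1(L,L) \simeq \operatorname{Rep}(\overline{Q_\tau},e)$), one substitutes $\widehat{X(Q_\tau,e)}_0$ for $\widehat{\mu^{-1}(0)}_0$ to obtain
\[
\widehat{R(g,n)}_\rho \simeq \widehat{\mathfrak{h}}_0 \times \widehat{X(Q_\tau,e)}_0,
\]
which is the desired statement. There is no real obstacle here: both propositions have already done all the work, and the proof amounts to verifying that the base point on $\mu^{-1}(0)$ supplied by Proposition~\ref{propDEF} corresponds to the origin in $X(Q_\tau,e)$ under the isomorphism of Proposition~\ref{propL}(c). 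This last point is automatic because the isomorphism of Proposition~\ref{propL}(b) is linear and $G(e)$-equivariant and therefore sends $0$ to $0$.
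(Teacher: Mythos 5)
Your proposal is correct and matches the paper's argument exactly: the paper derives this corollary by directly combining Proposition \ref{propDEF} with Proposition \ref{propL}, which is precisely what you do. Your extra remark that the base points correspond under the linear $G(e)$-equivariant identification is a harmless and accurate elaboration of a detail the paper leaves implicit.
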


\begin{rmk}
In \cite[\S 3]{KLS}, the  analog of this corollary for coherent sheaves on K3 surfaces was proved with some difficulty, see \cite[Propositions 3.6 and 3.8]{KLS}, since at the time, formality for polystable sheaves on K3 surfaces was not known. This formality property has been proven recently by Budur-Zhang \cite{BZ}. 
\end{rmk}

\noindent {\it Proof of Theorem \ref{thrmGLRat}.} 
The moduli space  $R(g,n)$ contains an open dense subsets of isomorphism classes of simple representations. In particular, by taking direct sums,  $R(g,n)$ contains semisimple representations of type $\tau=(e_1,\beta_1;\ldots; e_r,\beta_r)$ for every partition $\sum_{i=1}^re_ib_i=n$ with $r, e_i, \beta_i\in \bN\setminus\{0\}$. These are also all the representation types occurring on $X(Q,n)$, where $Q$ is the quiver with one vertex and $g$ loops. The proof of Theorem \ref{thrmRSQ} gives that all $X(Q_\tau,e)$ have rational singularities. By Corollary \ref{thrmXQg}, this implies that all semisimple representations in $R(g,n)$ are rational singularities. So every semisimple representation in $R(g,n)$ has an open neighborhood which has rational singularities. Using the group action to map isomorphically a neighborhood of any point to a neighborhood of a point arbitrarily close to a semisimple point, we conclude that $R(g,n)$ has rational singularities everywhere. $\hfill\Box$

\section{$\SL_n$-representations of surface groups}\label{s5}

The sets
$
\homo(\pi_1(C_g),\GL_n(\bC))$ and $\homo(\pi_1(C_g),\SL_n(\bC))
$
are the sets of complex points of two schemes of finite type over $\bQ$ which we denote by
$
R(g,\GL_n)$ and $R(g,\SL_n),
$
respectively. 
The connection between them is given by the following, see \cite[Lemma 8.17]{BS} for the complex points: 

\begin{lemma}\label{lemETA}
Let $g, n\ge 1$. There is an \'etale morphism of $\bQ$-schemes
$$
R(g,\SL_n)\times_\bQ (\bG_m)^{2g}\ra R(g,\GL_n),
$$
where $\bG_m=\spec \bQ[T,T^{-1}]$.
\end{lemma}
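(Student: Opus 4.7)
The plan is to construct the morphism $\Psi$ explicitly and deduce its étaleness by base change from a single elementary étale cover. Define
$$
\Psi: R(g,\SL_n)\times_\bQ(\bG_m)^{2g}\lra R(g,\GL_n),\quad \bigl((\tilde x_i,\tilde y_i)_i,(\lambda_i,\mu_i)_i\bigr)\mapsto (\lambda_i\tilde x_i,\mu_i\tilde y_i)_i.
$$
Since scalars are central in $\GL_n$, one has $[\lambda_i\tilde x_i,\mu_i\tilde y_i]=[\tilde x_i,\tilde y_i]$; hence the $\SL_n$-relation $\prod_i[\tilde x_i,\tilde y_i]=I$ implies $\prod_i[\lambda_i\tilde x_i,\mu_i\tilde y_i]=I$ in $\GL_n$, so $\Psi$ is well-defined as a morphism of $\bQ$-schemes.

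The heart of the argument is to show that the auxiliary map
$$
\phi:\SL_n\times_\bQ\bG_m\lra \GL_n,\quad (M,\lambda)\mapsto \lambda M,
$$
is étale. The cleanest route is to exhibit $\phi$ as a $\mu_n$-torsor: the group scheme $\mu_n$ acts freely on $\SL_n\times\bG_m$ by $\zeta\cdot(M,\lambda)=(\zeta M,\zeta^{-1}\lambda)$, the map $\phi$ is $\mu_n$-invariant, and over $\bQ$ (characteristic zero) $\mu_n$ is étale, so the quotient is an étale Galois cover; a dimension count plus surjectivity identifies $\GL_n$ with this quotient. Alternatively, one checks directly that the differential of $\phi$ at $(M,\lambda)$ sends $(A,t)\mapsto \lambda A+tM$, and after translation to the identity this is the vector-space isomorphism $\mathfrak{sl}_n\oplus \bQ\cdot I\xrightarrow{\sim}\Mat(n,\bQ)$.

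Taking $2g$ copies gives the étale morphism
$$
\Phi:=\phi^{\times 2g}:\SL_n^{2g}\times_\bQ(\bG_m)^{2g}\lra \GL_n^{2g}.
$$
Let $w:\GL_n^{2g}\to \GL_n$ and $w_{\SL}:\SL_n^{2g}\to \SL_n$ be the product-of-commutators maps. Because commutators are insensitive to the scalar factors $\lambda_i,\mu_i$, the composition $w\circ\Phi$ factors as projection to $\SL_n^{2g}$ followed by $w_{\SL}$. Therefore
$$
\Phi^{-1}\bigl(R(g,\GL_n)\bigr)=\Phi^{-1}(w^{-1}(I))=w_{\SL}^{-1}(I)\times(\bG_m)^{2g}=R(g,\SL_n)\times(\bG_m)^{2g},
$$
and $\Psi$ is precisely the base change of $\Phi$ along the closed immersion $R(g,\GL_n)\hookrightarrow \GL_n^{2g}$. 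Since étaleness is preserved by base change, $\Psi$ is étale, proving the lemma.

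The main (and only) substantive step is the étaleness of the scalar-extension map $\phi$; this uses that $n$ is invertible, which is automatic over $\bQ$, so no further obstacle arises.
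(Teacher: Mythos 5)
Your proof is correct, and it reaches the same elementary degree-$n^{2g}$ cover as the paper but organizes the base changes differently. The paper's proof fixes the determinant data first: it shows the $n$-th power map $m:(\bG_m)^{2g}\to(\bG_m)^{2g}$ is \'etale, pulls it back along the determinant morphism $f:R(g,\GL_n)\to(\bG_m)^{2g}$, and then verifies by an explicit coordinate-ring computation ($X_i\mapsto s_iX_i$, using $\det(X_i)-s_i^n \mapsto s_i^n(\det(X_i)-1)$) that this fiber product is isomorphic over $(\bG_m)^{2g}$ to $R(g,\SL_n)\times_\bQ(\bG_m)^{2g}$. You instead prove \'etaleness of the ambient scaling map $\phi:\SL_n\times\bG_m\to\GL_n$ once and for all (your $\phi$ is itself the pullback of the paper's $m$ along $\det:\GL_n\to\bG_m$, so the two covers coincide), and then restrict to the representation variety using the centrality of scalars to see that $\Phi^{-1}(w^{-1}(I))=w_{\SL}^{-1}(I)\times(\bG_m)^{2g}$ scheme-theoretically. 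This buys you a cleaner conceptual statement --- the only input is $\mathfrak{gl}_n=\mathfrak{sl}_n\oplus\bQ\cdot I$ in characteristic zero, or equivalently the $\mu_n$-torsor structure --- at the cost of no explicit ring isomorphism; the paper's version buys an explicit description of $R(g,\SL_n)\times_\bQ(\bG_m)^{2g}$ as a fiber product over the torus. One small point: in your torsor argument the phrase ``a dimension count plus surjectivity identifies $\GL_n$ with the quotient'' should be backed by noting that $\phi$ is finite with fibers exactly the free $\mu_n$-orbits and $\GL_n$ is normal (Zariski's main theorem); your alternative differential computation already closes this gap, so the proof stands.
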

\begin{proof} On the underlying sets of complex points, this just the morphism 
$$
\homo(\pi_1(C_g),\SL_n(\bC))\times (\bC^*)^{2g} \lra \homo(\pi_1(C_g),\GL_n(\bC))
$$
$$
((x_i,y_i)_{1\le i\le g}, (\lam_i,\mu_i)_{1\le i\le g})\mapsto (\lam_ix_i,\mu_iy_i)_{1\le i\le g}.
$$
To prove the claim over $\bQ$, we need to work with  $\bQ$-algebras though. Let $$A=\bQ[s_1^{\pm 1}, t_1^{\pm 1},\ldots,s_g^{\pm 1}, t_g^{\pm 1}]$$ be ring of regular functions on $(\mathbb{G}_m)^{2g}$. Define the morphism of $\bQ$-algebras $$m^\#:A\ra A,\quad (s_i,t_i)\mapsto (s_i^n,t_i^n).$$
Let  $m: (\mathbb{G}_m)^{2g}\ra (\mathbb{G}_m)^{2g}$ be the corresponding morphism on spectra. Then $m$ is an \'etale morphism sending a closed point $(\lambda_i,\mu_i)_{1\le i\le g}$ to $(\lambda_i^n,\mu_i^n)_{1\le i\le g}$.  

Let $B$ be the ring of regular functions on $\R{g}{\bGL_n}$, that is, 
$$
B=\frac{\bQ[X_i,Y_i,\det^{-1}(X_i),\det^{-1}(Y_i)]_{1\le i\le g}}{\langle \prod_{i=1}^g[X_i,Y_i]-1\rangle}
$$
where
$X_i=((X_i)_{jk})_{1\le j,k\le n}$, $Y_i=((Y_i)_{jk})_{1\le j,k\le n}$ are matrices of indeterminates. Define the $\bQ$-algebra morphism
$$
f^\#:A\ra B,\quad (s_i,t_i)\mapsto (\det(X_i),\det(Y_i))
$$
Let $f:\R{g}{\GL_n}\ra (\mathbb{G}_m)^{2g}$ be the corresponding morphism of $\bQ$-schemes. It is given 
by $$f:(g_i,h_i)_{1\le i\le g}\mapsto(\det(g_i),\det(h_i))_{1\le i\le g}$$
on closed points.  Form the fiber product
$$
\xymatrix{
\R{g}{\GL_n} \times _{(\mathbb{G}_m)^{2g}}(\mathbb{G}_m)^{2g} \ar[d]_{f'} \ar[r]^{\quad \quad \quad m'}& \R{g}{\GL_n} \ar[d]_{f}\\
(\mathbb{G}_m)^{2g} \ar[r]^m& (\mathbb{G}_m)^{2g}.
}
$$
We claim that there is an isomorphism of $(\mathbb{G}_m)^{2g}$-schemes
$$
\xymatrix{
\R{g}{\SL_n} \times _{\bQ}(\mathbb{G}_m)^{2g} \ar[rr]_g^\sim \ar[dr]_{p}& & \R{g}{\GL_n} \times _{(\mathbb{G}_m)^{2g}}(\mathbb{G}_m)^{2g} \ar[dl]^{f'} \\
& (\mathbb{G}_m)^{2g} &
}
$$
where $p$ is the second projection. On closed points, $g$ sends a  point $((g_i,h_i),(\lambda_i, \mu_i))$ of $\R{g}{\SL_n} \times _{\bQ}(\mathbb{G}_m)^{2g}$ to the point $((\lambda_ig_i,\mu_ih_i),(\lambda_i, \mu_i))$ of $\R{g}{\GL_n} \times _{(\mathbb{G}_m)^{2g}}(\mathbb{G}_m)^{2g}$. 

To define $g$ schematically, we define  the corresponding $\bQ$-algebra morphism $g^\#$.  Let 
$$
C=\frac{B}{\langle \det(X_i)-1, \det(Y_i)-1 \rangle_{1\le i\le g}}\otimes_\bQ A.
$$
Then $$\R{g}{\SL_n} \times _{\bQ}(\mathbb{G}_m)^{2g}=\spec(C).$$
Let $D=
B\otimes_ A A
$
via the $A$-algebra morphisms $f^\#:A\ra B$ and $m^\#:A\ra A$. Then
$$\R{g}{\GL_n} \times _{(\mathbb{G}_m)^{2g}}(\mathbb{G}_m)^{2g}=\spec(D).$$ 
Note that there is a natural isomorphism of $A$-algebras
$$
D\simeq \frac{B\otimes_\bQ A}{\langle \det(X_i)-s_i^n, \det(Y_i)-t_i^n\rangle}
$$
Define first a morphisms of $A$-algebras 
$$g^\#:B\otimes_\bQ A\ra B\otimes_\bQ A$$
by $X_i \mapsto s_iX_i$ (that is, with $s_iX_i$ viewed as a matrix), $Y_i \mapsto t_iY_i$, $s_i\mapsto s_i$, $t_i\mapsto t_i$. This is clearly an isomorphism over $A$, since $s_i$ and $t_i$ are invertible and the inverse is given by $X_i\mapsto t_i^{-1}X_i$, $Y_i\mapsto t_i^{-1}Y_i$. Moreover, $g^\#(\det(X_i)-s_i^n)=s_i^n(\det(X_i)-1)$, and similarly for $Y_i$. Hence the ideal generated by the image under $g^\#$ of the ideal
$\langle \det(X_i)-s_i^n, \det(Y_i)-t_i^n\rangle  
$
is precisely the ideal $\langle \det(X_i)-1, \det(Y_i)-1 \rangle$, since $s_i$ and $t_i$ are invertible. It follows that $g^\#$ induces a well-defined $A$-algebra isomorphism $
g^\#: D \xa{\sim} C.
$
Thus $g$ is an isomorphism over $(\bG_m)^{2g}$.

The base change of an \'etale morphism is \'etale. Hence $m'$ is \'etale. Then the composition
$$
m'\circ g:\R{n}{\SL_d} \times _{k}(\mathbb{G}_m)^{2n} \ra \R{n}{\GL_d}
$$
is \'etale since $g$ is an isomorphism.
\end{proof}

\smallskip
\noindent{{\it Proof of Theorem \ref{thrmSLD}}.} By general properties of schemes (see for example the proof of \cite[Lemma 3.2]{BZ}), Theorem \ref{thrmGLRat} implies that $R(g,\GL_n)$ is a $\bQ$-variety with rational singularities for $g\ge 2$. By Lemma \ref{lemETA}, the same is true for $R(g,\SL_n)$. This implies in particular Theorem \ref{thrmSLD} and it shows that the natural scheme structure on $\homo(\pi_1(C_g),\SL_n)$ is reduced for $g\ge 2$.  $\hfill\Box$

\section{Consequences}\label{secAAG}

We recall the following terminology from Theorem \ref{thrmGAM1}. Let $\Gamma$ be a topological group. The abscissa of convergence $$\al(\Gamma)$$ is the smallest $s_0\in\bR\cup\{\infty\}$ such that representation zeta function
$$
\zeta_\Gamma (s)=\sum_{m\ge 1}r_m(\Gamma)m^{-s},
$$
with 
$r_m(\Gamma)
$ being the number of isomorphism classes of continuous irreducible $m$-dimensional complex representations of $\Gamma$, converges for $Re(s)>s_0$. 

In Theorem \ref{thrmGAM3}, the abscissa $\al$ is defined similarly but counting all the irreducible representations without restricting to the continuous ones. 

In order for $\zeta_\Gamma(s)$ to be defined, $r_m(\Gamma)$ must  be finite. For finitely generated pro-finite groups $\Gamma$, this property is equivalent to having finite abelianization by \cite{JK}. This is the case for the groups in Theorems \ref{thrmGAM2} and \ref{thrmGAM1}. For arithmetic groups in higer rank semisimple groups, the finiteness of $r_m(\Gamma)$ is a consequence of Margulis superrigidity, see \cite[p2]{BLMM}. This is the case for the groups in Theorems \ref{thrmGAM0} and \ref{thrmGAM3}.

\smallskip
\noindent{\it Proof of Theorem \ref{thrmGAM2}.} Follows from Theorem \ref{thrmGAM1} and general facts about abscissae of convergence of Dirichlet generating functions. $\hfill\Box$

\smallskip
\noindent{\it Proof of Theorems \ref{thrmGAM1} and \ref{thrmGAM4}.} They follow directly from Theorem \ref{thrmSLD} and \cite[Theorem IV]{AA}. $\hfill\Box$

\smallskip
\noindent{\it Proof of Theorem \ref{thrmGAM3}.} Follows from Theorem \ref{thrmSLD} together with \cite[Theorem II]{AA1}. See the comment following Corollary 1.4 in \cite{BZo}. $\hfill\Box$

\smallskip
\noindent{\it Proof of Theorem \ref{thrmGAM0}.} Follows from Theorem \ref{thrmGAM3} and general facts about abscissae of convergence of Dirichlet generating functions. $\hfill\Box$

\end{document}